\documentclass{article}

\usepackage{amsmath}
\usepackage{amssymb} 
\usepackage{amsfonts} 
\usepackage{bm}
\usepackage{graphicx}
\usepackage{booktabs}
\usepackage[margin=1in]{geometry}
\usepackage{natbib}
\usepackage{amsthm}
\usepackage[colorlinks]{hyperref}
\hypersetup{citecolor=blue}
\usepackage{authblk} 
\usepackage{bbm}
\usepackage{xcolor}

\usepackage[capitalize]{cleveref}
\crefname{assum}{assumption}{assumptions}
\Crefname{assumption}{Assumption}{Assumptions}

\newtheorem{definition}{Definition}
\newtheorem{theorem}{Theorem}
\newtheorem{lemma}{Lemma}
\newtheorem{proposition}{Proposition}
\newtheorem{corollary}{Corollary}

\newtheorem{example}{Example}
\newtheorem{bound}{Bound}
\newtheorem{problem}{Problem}
\newtheorem*{theorem*}{Theorem}
\newtheorem*{lemma*}{Lemma}
\newtheorem*{proposition*}{Proposition}
\newtheorem*{corollary*}{Corollary}
\newtheorem*{example*}{Example}
\newtheorem{bound*}{Bound}
\newtheorem{problem*}{Problem}
\newtheorem{assumption}{Assumption}

\newcommand{\mynull}{\operatorname{null}}
\newcommand{\range}{\operatorname{range}}
\newcommand{\diag}{\operatorname{diag}}
\newcommand{\Res}{\operatorname{Res}}
\newcommand{\sign}{\operatorname{sign}}
\newcommand{\supp}{\operatorname{supp}}

\newcommand{\dist}{\operatorname{dist}}

\newcommand{\uargmin}{\operatorname*{argmin}}

\title{Explicit Bounds on Hoffman Constants for\\ Sparse Polyhedral Regularizers}

\author[1]{Faris Chaudhry}
\author[1]{Anthea Monod}
\author[2]{Keisuke Yano}
\affil[1]{Imperial College London, UK\\ \texttt{\{fc522, a.monod\}@imperial.ac.uk}}
\affil[2]{Institute of Statistical Mathematics, Japan\\ \texttt{yano@ism.ac.jp}}

\date{}

\begin{document}

\maketitle

\begin{abstract}
Hoffman constants quantify the distance to the solution set of a polyhedral system in terms of its residual. They control error bounds, geometric conditioning, the convergence rate of first-order iterative algorithms, and the stability of regression estimators to a perturbation in the target. Despite their importance, explicit values of Hoffman constants are rarely available beyond simple settings due to their computational complexity. In this paper, we derive explicit bounds on the Hoffman constant for a class of structured polyhedral systems arising in sparse regularization. We show that the constants admit a decomposition into spectral, geometric, and interaction components, which together determine their scaling behavior. This framework explains the gap between worst-case dimension-dependent bounds and the favorable conditioning observed empirically in structured problems. As a primary application, we analyze $\ell_1$-regularized least squares, demonstrating that the effective constants depend on low-dimensional active subspaces rather than the ambient dimension. We extend this analysis to the elastic net and adaptive $\ell_1$-regularized formulations, demonstrating how the underlying optimization geometry is altered by the $\ell_2$ penalty and reweighting scheme. Furthermore, we relate our results to classical statistical assumptions such as the restricted eigenvalue and isometry conditions. We show that the restricted eigenvalue condition bounds only the primal curvature of the active set, leaving the dual interaction geometry unconstrained and is thus insufficient alone for fast theoretical convergence rates. We further show standard $\ell_\infty$ coherence conditions do not yield dimension-independent Euclidean bounds to close this gap, which motivates the analysis of Hoffman bounds in non-Euclidean norms. Our results show that Hoffman constants, typically treated as implicit quantities, admit explicit and interpretable forms in structured regression settings. For practitioners, this geometric framework provides actionable tools for post-hoc feature pruning and a priori optimal hyperparameter selection.
\end{abstract}

\section{Introduction}

Hoffman constants are a classical and central quantity in the variational analysis of polyhedral systems~\citep{hoffman1952approximate}. Fundamentally, they quantify how far an arbitrary point lies from the solution set of a linear system in terms of the magnitude of its constraint violation. In modern data contexts, they provide global error bounds for polyhedral feasibility problems and underlie a wide range of results concerning metric regularity, stability of solution mappings~\citep{robinson1981polyhedral}, sensitivity analysis, and linear convergence guarantees for first-order methods~\citep{leventhal2008randomized,bolte2017errorbounds, drusvyatskiy2018errorbounds,chaudhry2026trajectory}. Given these connections, the Hoffman constant of a system is a quantitative measure of geometric conditioning.

Despite their importance, Hoffman constants are known to be extremely difficult to compute explicitly. For purely equality-constrained systems, they reduce to familiar spectral quantities such as reciprocals of singular values~\citep{leventhal2008randomized}. However, once inequalities are introduced, the geometry becomes substantially more complex. Mixed systems depend on which constraints are active, the corresponding normals interact, and how affine and polyhedral components intersect. In general, no simple closed-form expression is available, and most existing results are either existential, worst-case, or expressed through combinatorial characterizations that are difficult and can be NP-hard to evaluate in practice~\citep{foucart2013compressivesensing, freund1999some, pena2000understanding}. As a result, Hoffman constants are often invoked abstractly, rather than used as concrete computable and interpretable quantities. This gap is particularly relevant in structured high-dimensional optimization problems, where empirical behavior is often far better than pessimistic worst-case theory would suggest. A canonical example is the least absolute shrinkage and selection operator (LASSO)~\citep{tibshirani1996lasso} and related sparse regularization methods. These problems combine a smooth quadratic loss with polyhedral regularization, producing Karush--Kuhn--Tucker (KKT) systems with highly structured linear equality and inequality constraints. Such systems are ubiquitous in statistics, signal processing, and machine learning, yet their geometric conditioning remains poorly understood at an explicit level.

In this paper, we show that for a class of structured polyhedral systems used in sparse regularization, Hoffman bounds can in fact be derived, decomposed, and interpreted explicitly. We focus on systems whose constraints admit a natural separation into affine, sign, and box components, with the LASSO KKT system as a central motivating example. Our analysis reveals that the associated Hoffman constants decompose into three distinct mechanisms: (i) an \emph{affine contribution} arising from the conditioning of the design matrix and hyperparameters; (ii) a \emph{polyhedral contribution} induced by the constraint set; and (iii) an \emph{interaction contribution} capturing the geometry between the affine space and polyhedral constraint set. Our perspective yields transparent bounds, clarifies dimension-dependent scaling, and explains how favorable local structure can coexist with poor ambient worst-case conditioning. 

For practitioners, our bounds can provide quantitative tools to (i) check for poor optimization conditioning which causes certain optimization algorithms to stall, (ii) prune inactive features to improve the optimization space as a post-hoc procedure similar to existing tilting methods~\citep{cho2012tiltingfeaturescreening}, (iii) calibrate KKT-based optimization stopping criteria by translating them to the distance to the optimal solution set, and (iv) assess the stability of sparse regression solutions under perturbation to the target variable, such as those caused by cross-validation.

In the setting of $\ell_1$-regularized least squares, our framework shows that effective conditioning is governed not simply by ambient dimension, but by active-set curvature and active-inactive feature interactions. This perspective reveals a fundamental divide between statistical and algorithmic guarantees: while curvature in the active feature space controls the statistical estimation error in the \textit{primal} problem, the interaction of active and inactive features dictates a form of a \textit{dual} optimization residual that governs the geometric conditioning of the KKT system. Because classical high-dimensional statistics is primarily focused on statistical estimation error and feature selection, its standard conditions can leave the optimization geometry largely unconstrained. For example, the restricted eigenvalue property bounds only the active curvature, ignoring the dual optimization geometry; meanwhile, incoherence conditions control dual interactions in the $\ell_\infty$ norm but fail to constrain the Euclidean convergence rate. By formally bridging this gap, we demonstrate how the restricted conditions of high-dimensional statistics can be repurposed to derive uniform, dimension-independent Hoffman bounds for optimization.

The remainder of this paper is organized as follows. \cref{sec:background} introduces common background such as sparse regularizers, the global and restricted Hoffman constant, and standard high-dimensional statistical conditions. It also provides the literature review. \cref{sec:explicit-hc} develops explicit Hoffman bounds for general structured polyhedral systems through a decomposition argument. \cref{sec:application-lasso} specializes this general theory for the $\ell_1$-regularized least squares system (i.e., the LASSO). \cref{sec:restricted-statistical-conditions} connects these optimization geometries to high-dimensional statistical theory. We prove a hierarchy of geometric stability: the restricted nullspace property guarantees invertibility of the KKT system; the restricted eigenvalue condition bounds the primal active curvature but leaves the active-inactive interaction unconstrained; and we show that no standard $\ell_\infty$ incoherence condition---including the mutual and entrywise coherence properties---can close this gap in the Euclidean norm, as any translation from $\ell_\infty$ or sparse spectral control to the $\ell_2$ operator norm accumulates an unavoidable ambient dimensional penalty. This establishes a fundamental obstruction of Euclidean geometry, and motivates non-Euclidean geometries. \cref{sec:hoffman-other-norms} examines how these geometric bounds scale in non-Euclidean ($\ell_1$ and $\ell_\infty$) norms along with a discussion of fundamental sample complexity limits. \cref{sec:application-other} demonstrates the framework's broader utility by analyzing how algorithmic interventions, such as the elastic net and adaptive LASSO, alter the underlying optimization geometry; in doing so, we argue that the prototypical decomposition argument can be used for a wide variety of sparse regularization problems. Here, optimizing the variational Hoffman bound can be used for optimal hyperparameter selection. Finally, \cref{sec:discussion} concludes with broader implications and future directions.

\section{Background and Related Work}
\label{sec:background}

\subsection{Sparse Polyhedral Regularizers}
\label{subsec:sparse-regularizers-background}

In high-dimensional estimation problems where the number of features $d$ vastly exceeds the sample size of observations $n$ (that is, $d \gg n$), classical unpenalized models like ordinary least-squares (OLS) are ill-posed. To obtain meaningful estimators, it is standard practice to assume that the true underlying signal is sparse. The most direct approach to estimating such a signal is to penalize the $\ell_0$ pseudonorm, which counts the number of nonzero coefficients.
\begin{problem}[The $\ell_0$-regularized least squares estimator; \citet{foucart2013compressivesensing,bertsimas2016best}]
    \begin{equation}
    \hat{\beta} = \uargmin_{\beta \in \mathbb{R}^d} \frac{1}{2n} \|A\beta - y\|_2^2 + \lambda_n \|\beta\|_0,
\end{equation}
where $A \in \mathbb{R}^{n \times d}$ is the design matrix, $y \in \mathbb{R}^n$ is the target vector, and $\lambda_n \in \mathbb{R}_+$ is the regularization parameter governing the penalty strength. 
\end{problem}
However, optimizing the $\ell_0$ penalty is a combinatorial search problem that is known to be NP-hard~\citep{foucart2013compressivesensing}.

To achieve a computationally tractable objective, the $\ell_0$ penalty is replaced by its convex relaxation, the $\ell_1$ norm. This yields a Lagrangian form of the $\ell_1$-regularized least squares problem, better known as the LASSO.
\begin{problem}[The LASSO estimator; \citet{tibshirani1996lasso}]
    \label{problem:lasso}
    \begin{equation}
    \hat{\beta} = \uargmin_{\beta \in \mathbb{R}^d} \frac{1}{2n} \|A\beta - y\|_2^2 + \lambda_n \|\beta\|_1.
\end{equation}
\end{problem}
The polyhedral geometry of the $\ell_1$ penalty naturally promotes feature selection and sparsity.

Let $[d] := \{1, \dots, d\}$. The intrinsic sparsity of some vector of coefficients $\beta \in \mathbb{R}^d$ is captured by the \emph{active set} (or support):
\begin{equation}
    \mathcal{A}(\beta)= \{j \in [d] : \beta_j \neq 0\} = \supp(\beta),
\end{equation}
with a sparsity level given by $s(\beta) := |\mathcal{A}(\beta)|$. A vector $\beta$ is called $s$-sparse if it has at most $s$ nonzero entries, or formally $\|\beta\|_0 \le s$. The identified active set of the optimal estimator $\hat{\beta}$ is intrinsically low-dimensional, generally satisfying $\hat{s} :=s(\hat{\beta}) \ll n \ll d$. The corresponding subspace of vectors supported on $\mathcal{A}$ is denoted by
\begin{equation}
    \mathcal{S}_{\mathcal{A}} := \{\beta \in \mathbb{R}^d : \beta_j = 0 \text{ for all } j \notin \mathcal{A}\}.
\end{equation}
Note that there are $2^d$ total possible active sets in the ambient space, but only $\sum_{k=0}^s \binom{d}{k}$ possible $s$-sparse active sets, which greatly reduces the combinatorial search space when $s \ll d$.

In the context of statistics, it is important to distinguish between the optimal LASSO estimator $\hat{\beta}$ and the true underlying signal $\beta^*$ which is assumed to generate the target; namely, $y = A \beta^* + \epsilon$ and $\epsilon$ is some noising distribution. Their active sets are denoted by $\hat{\mathcal{A}}$ and $\mathcal{A}^*$ respectively. The difference between these two vectors, $\hat{\beta} - \beta^*$, is referred to as the statistical error. In \cref{subsec:sparse-recovery-background}, we provide background on the conditions used to guarantee recovery of $\mathcal{A}^*$ and good estimation of $\beta^*$ from the high-dimensional statistical literature.

To compute the estimator $\hat{\beta}$ in practice, a common algorithmic choice is the proximal gradient method~\citep{parikh2014proximal}. For LASSO, the algorithm is known as the iterative shrinkage-thresholding algorithm (ISTA)~\citep{daubechies2004ista}, and its accelerated variant fast ISTA (FISTA)~\citep{beck2009fista}. Most recently, the iterative dynamics of these solvers have inspired algorithm unfolding techniques, leading to neural network architectures like the learned ISTA (LISTA)~\citep{gregor2010lista, chen2021lista}. These models attempt to amortize an approximation of the sparse estimator map by learning algorithm parameters directly.

\subsection{The Global Hoffman Constant}
\label{subsec:hoffman-background}

The geometric conditioning of polyhedral sets is classically quantified by the Hoffman constant~\citep{hoffman1952approximate}. It is a foundational tool that bounds the distance from any point to a polyhedral feasible set by a constant multiple of the residual of the inequalities defining that set. In the broader context of variational analysis, the Hoffman bound serves as the exact polyhedral instantiation of a global error bound or metric subregularity condition and always exists~\citep{robinson1981polyhedral, rockafellar1998variational}. Equivalently, it establishes the global upper Lipschitz continuity of the corresponding polyhedral set-valued inverse mapping with respect to additive perturbations.

\begin{definition}[Hoffman constant; \citet{hoffman1952approximate}]
\label{def:hoffman}
For a nonempty polyhedral set $\mathcal{X} = \{x \in \mathbb{R}^d : Gx = h, \; Mx \le r\}$ and given norms $\|\cdot\|_V$ on the domain and $\|\cdot\|_W$ on the residual space, the \emph{Hoffman constant} $H^{V, W}$ is the smallest constant such that
\begin{equation}
    \dist_V(x, \mathcal{X})
    :=\min_{x'\in\mathcal{X}}\|x-x'\|_V
    \le H^{V, W} 
    \underbrace{
    \left\|
    \begin{pmatrix}
    Gx - h \\
    {[Mx - r]}_+
    \end{pmatrix}
    \right\|_W
    }_{:= \Res_W(x, \mathcal{X})}
    \quad \text{for all}\; x \in \mathbb{R}^d,
\end{equation}
where $[\cdot]_+ = \max(0, \cdot)$ denotes the element-wise positive part, and $\Res_W(x, \mathcal{X})$ defines the scalar residual of the algebraic system defining $\mathcal{X}$ evaluated under the norm $\|\cdot\|_W$. 
\end{definition}
The most common scenario for analyzing the Hoffman constant is the Euclidean setting when both $V$ and $W$ are the $\ell_2$ norm with the corresponding Hoffman constant $H^{2,2}$. In this case, the global Hoffman constant for a simple system of linear equalities $Mx=r$ admits a direct spectral characterization. It is determined by the reciprocal of the smallest positive singular value $\sigma^+_{\min}(M)$ of the defining matrix~\citep{leventhal2008randomized}:
\begin{equation}
    H^{2, 2} = \frac{1}{\sigma_{\min}^{+}(M)} = \|M^{\dagger}\|_{2},
\end{equation}
where $M^{\dagger}$ is the pseudoinverse.

Hoffman constants for polyhedral systems play a significant role in optimization, numerical analysis, and stability analysis:
\begin{itemize}
    \item \textbf{Optimization.} For polyhedral systems, a small Hoffman constant means decreasing constraint violations translate efficiently into progress toward feasibility. Thus, the convergence rate of first-order methods is governed by the Hoffman constant. In particular, for a wide class of proximal gradient and randomized methods, the error contraction factor is shown to be $1 -\mathcal{O}(1/(H^2 L))$~\citep{ leventhal2008randomized} after an error bound or Polyak--{\L}ojasiewicz type condition is established~\citep{karimi2016pl,drusvyatskiy2018errorbounds,chaudhry2026trajectory}.

    It also follows from \cref{def:hoffman} that the Hoffman bound provides a way to translate residuals, which are cheap to calculate, into a reliable guarantee of distance to an optimal solution. This can be used to justify stopping criteria for iterative algorithms.
    \item \textbf{Numerical analysis.} Hoffman-type constants can also be viewed as a type of condition measure in numerical linear algebra~\citep{chaudhry2026trajectory}. In particular, they can be viewed as generalizations of classical matrix condition numbers and are closely connected to other condition measures~\citep{pena2020equivalence} for linear systems such as Renegar's condition number~\citep{renegar2001mathematical} and the Stewart--Todd condition measure~\citep{stewart1989scaled,todd1990dantzig}, which quantify sensitivity of feasibility and optimality to perturbations. 
    \item \textbf{Stability.} Finally, Hoffman constants provide worst-case sensitivity bounds under perturbation of the problem data. Drawing on the fundamental results of~\citet{robinson1981polyhedral} for polyhedral multifunctions, the Hoffman constant serves as an upper Lipschitz modulus of the inverse KKT mapping. For regularized problems like LASSO, this implies that, under local uniqueness of the optimal solution, the distance between optimal solutions $\hat{\beta}(y)$ and $\hat{\beta}(y')$ corresponding to perturbed target vectors $y$ and $y'$ is linearly bounded by the Hoffman constant of the KKT system, taking a form like
    \begin{equation}
        \label{eq:stability-under-perturbation}
        \|\hat{\beta}(y) - \hat{\beta}(y')\|_V \le H^{V, W} \left\| \frac{1}{n} A^\top (y - y') \right\|_W.
    \end{equation}
    Hence, the Hoffman constant can be used to control leave-one-out changes in approximate cross-validation analyses~\citep{auddy2023approximatecv}.
    \item \textbf{Duality theory.} Recent work has uncovered theoretical connections between the conditioning of primal and dual feasibility problems. Under suitable Slater conditions, duality inequalities and identities can be established between the Hoffman constants of paired primal-dual polyhedral systems~\citep{pena2026hoffmanduality}. This perspective emphasizes how Hoffman constants are a geometric measure across dual optimization spaces.
\end{itemize}

While these global bounds have many uses, evaluating them explicitly remains a challenge. A few recent works are particularly relevant in attempting to characterize these geometries. \citet{pena2021hoffman} characterize Hoffman constants relative to reference polyhedra, although this general characterization does not expose how problem-specific structure, such as the design matrix and active support, determines their magnitude. Most closely related to our stability discussion is the work of~\citet{berk2023lasso} which derives Lipschitz bounds for the LASSO solution map under perturbations of the response and regularization parameter. Such output sensitivity analysis is highly relevant to domains like differentially-private LASSO, where bounding the sensitivity of the estimator is required to calibrate noise mechanisms. As noted above via the Robinson continuity property~\citep{robinson1981polyhedral}, a Hoffman bound for the KKT system can also yield a Lipschitz-type sensitivity bound. Nevertheless, the two quantities characterize fundamentally different mappings: the Lipschitz constant measures parameter-to-primal-solution stability, whereas the Hoffman constant measures residual-to-primal-dual-solution stability, thereby capturing the interactions between active and inactive features.

However, relying on the global Hoffman constant introduces a flaw in the high-dimensional sparse regime ($d \gg n$). While the underlying active linear subspaces may be exceptionally well-conditioned, composite regularizers like the LASSO yield mixed polyhedral KKT systems. To bound the global distance to optimality, the worst-case geometry is forced to account for dense ambient constraints across all inactive dimensions. Consequently, the global Hoffman constant degrades pathologically with the ambient dimension, typically scaling as $H \sim \mathcal{O}(\sqrt{d})$ based on the singular values of random design matrices~\citep{chaudhry2026trajectory, rudelson2010singularvalues}.

To circumvent these loose or sometimes infeasible global bounds, the high-dimensional statistics literature relies on localized, structural properties of the design matrix, which we formalize next.

\subsection{Statistical Conditions in Sparse Recovery}
\label{subsec:sparse-recovery-background}

In high-dimensional statistics, successful recovery of the true sparse signal $\beta^*$ requires the design matrix to exhibit favorable geometric properties on sparse subspaces~\citep{wainwright2019highdimensional, vershynin2018highdimensional, foucart2013compressivesensing}. We define the normalized Gram matrix as $G = \frac{1}{n} A^\top A$, and denote its submatrices for a given active set $\mathcal{A}$ by $G_{\mathcal{A} \mathcal{A}}$, $G_{\mathcal{A}^c \mathcal{A}}$, and so forth.

Because we minimize a regularized objective rather than an unconstrained one, the relevant error vectors are not arbitrary directions in $\mathbb{R}^d$. To characterize this geometry, we first define the \emph{restricted cone} for any generic active set $\mathcal{A} \subseteq [d]$ and constant $\alpha \in \mathbb{R}_+$ as the set of directions $\Delta \in \mathbb{R}^d$ where the $\ell_1$ mass off the support is bounded by a multiple of the mass on the support~\citep[Definition 7.21]{wainwright2019highdimensional}:
\begin{equation}
    \mathcal{C}_{\alpha}(\mathcal{A}) = \left\{ \Delta \in \mathbb{R}^d : \|\Delta_{\mathcal{A}^c}\|_1 \le \alpha \|\Delta_{\mathcal{A}}\|_1 \right\}.
    \label{eq: restricted cone}
\end{equation}
Observe that $\mathcal{S}_{\mathcal{A}} \subseteq \mathcal{C}_{\alpha}(\mathcal{A})$ for any $\mathcal{A}$ and $\alpha \in \mathbb{R}_+$ trivially as the inactive component is 0 by construction.

Standard statistical optimality arguments demonstrate that, for a suitably chosen $\lambda_n$, the \emph{statistical error} $\hat{\beta} - \beta^*$ is forced into the restricted cone $\mathcal{C}_{3}(\mathcal{A}^*)$ defined by the true active set~\citep{wainwright2019highdimensional}. Similarly, the \emph{optimization error} $\beta - \hat{\beta}$ for an arbitrary iterate $\beta$ is also funneled into these restricted directions along the algorithmic trajectory~\citep{agarwal2012fast}.

To guarantee that the objective possesses sufficient curvature within the restricted cone, the standard requirement is the restricted eigenvalue (RE) condition~\citep{bickel2009simultaneous, raskutti2010restricted}:
\begin{definition}[Restricted eigenvalue condition; \citet{wainwright2019highdimensional}]
The matrix $A$ satisfies the RE condition over a support $\mathcal{A}$ with parameter $\alpha \in \mathbb{R}_+$ if there exists a constant $\gamma > 0$ such that
\begin{equation}
    \frac{1}{n}\|A\Delta\|_2^2 \ge \gamma \|\Delta\|_2^2, \quad \forall \Delta \in \mathcal{C}_{\alpha}(\mathcal{A}) \setminus \{0\}.
\end{equation}
\end{definition}
Geometrically, when evaluated at the true support $\mathcal{A}^*$, the RE condition bounds the \emph{primal} statistical error by guaranteeing that the smallest eigenvalue of the active Gram submatrix, $\lambda_{\min}(G_{\mathcal{A}^* \mathcal{A}^*})$, is positive to permit curvature over sparse directions. It ensures that, if the prediction error $\frac{1}{n} \|A \Delta\|_2^2$ is small, the Euclidean distance to the true parameter $\|\Delta\|_2^2$ must also be proportionally small, bounded by $1/\gamma$. Results for the RE condition holding with high probability generally give $n \gtrsim s \log d$ as a tight bound~\citep[Theorem 7.16]{wainwright2019highdimensional}.

However, while primal curvature is necessary for parameter estimation, it does not guarantee that the estimator selects the correct features. To guarantee exact recovery of the true support (i.e., $\hat{\mathcal{A}} = \mathcal{A}^*$), the design matrix must control the interaction between active and inactive features.

This requirement is captured by statistical incoherence conditions. The irrepresentability condition (IRC)~\citep{zhao2006model} ensures that the projection of the inactive features onto the active features, weighted by the true signs, $\sign({\beta^*})$, is bounded below 1 in the $\ell_\infty$ norm. A stronger, sign-agnostic variant is the mutual incoherence condition (MIC)~\citep{wainwright2019highdimensional}, which bounds the induced $\ell_\infty$ operator norm directly:
\begin{equation}
    \| G_{(\mathcal{A}^*)^c \mathcal{A}^*} G_{\mathcal{A}^* \mathcal{A}^*}^{-1} \|_{\infty} \le 1 - \eta,
\end{equation}
for some margin $\eta \in (0, 1]$.

Exact support recovery also requires appropriate regularization tuning. First, to prevent false positives (inactive variables entering the model), $\lambda_n$ must be chosen sufficiently large to suppress the projected ambient noise on the inactive set. Second, to prevent false negatives (true signals being shrunk to zero), the LASSO requires a \emph{beta-min condition}. This assumes that the weakest true signal exceeds the maximum $\ell_\infty$ estimation error induced by both the noise $\epsilon$ and the shrinkage bias of $\lambda_n$~\citep[Theorem 7.21]{wainwright2019highdimensional}:
\begin{equation}
    \min_{j \in \mathcal{A}^*} |\beta_j^*| > B(\lambda_n, \epsilon) := \left\| G_{\mathcal{A}^*\mathcal{A}^*}^{-1} \frac{1}{n} A_{\mathcal{A}^*}^\top \epsilon \right\|_\infty + \lambda_n \|G_{\mathcal{A}^*\mathcal{A}^*}^{-1}\|_\infty.
\end{equation}
When the MIC holds and $\lambda_n$ is bounded between the noise floor and the beta-min signal ceiling, the LASSO is statistically consistent and identifies the correct active manifold in finite time~\citep{vaiter2015model}.

When one wishes to uniformly bound both the primal curvature and the incoherence simultaneously, the literature frequently relies on the restricted isometry property (RIP)~\citep{candes2008rip}.
\begin{definition}[Restricted isometry property; \citet{vershynin2018highdimensional}]
A matrix $A$ satisfies the RIP of order $s$ with constant $\delta_s \in (0, 1)$ if for all $s$-sparse vectors $v \in \mathbb{R}^d$:
\begin{equation}
    (1 - \delta_s)\|v\|_2^2 \le \frac{1}{n}\|Av\|_2^2 \le (1 + \delta_s) \|v\|_2^2.
\end{equation}
\end{definition}
The RIP is a powerful spectral condition: the matrix $A$ acts as a near-isometry on sufficiently sparse directions. That is, the eigenvalues of any sparse active Gram submatrix $G_{\mathcal{A}\mathcal{A}}$ lie in the interval $[1-\delta_s, 1+\delta_s]$, smoothly controlling the condition number of the active system. 

Certain random matrices like Gaussian and sub-Gaussian designs satisfy the RIP~\citep{baraniuk2008rip, kasiviswanathan2019rip, blanchard2011rip, lu2019rip}. Beyond unstructured ensembles, some highly structured matrices such as partial circulant or Toeplitz designs, which arise in signal processing, satisfy the RIP~\citep{rauhut2009circulant, rahut2012circulant}. Finally, constructing deterministic matrices with the RIP is an open problem~\citep{bandeira2013rip}.

In providing explicit bounds for Hoffman constants, we will see how and when these restricted statistical conditions provide bounds on the geometric components of the Hoffman-type bounds. For example, it can be shown that the RE alone only controls the primal notion of error (statistical error; curvature of the active set) and not the dual notion of error (interaction between the active and inactive set). It is precisely this lack of focus on the dual notion of error which arises in the geometric analysis, and is the crux of the divide between the optimization and statistical literature.

\subsection{The Restricted Hoffman Constant}
\label{subsec:restricted-hoffman-background}

To reconcile the gap between pessimistic global optimization bounds and the structured, local nature of trajectories observed empirically,~\citet{chaudhry2026trajectory} introduced a restricted geometric framework. The effective conditioning of algorithms like the ISTA is governed by a \emph{restricted Hoffman constant}, evaluated exclusively on the subsets of the domain visited by the trajectory of the algorithm.

\begin{definition}[Restricted Hoffman constant; \citet{chaudhry2026trajectory}]
\label{def:restricted-hoffman}
Let $\mathcal{X} = \{x \in \mathbb{R}^d : Gx = h, \; Mx \le r\}$ be a nonempty polyhedral set. Given a subset $\mathcal{K} \subseteq \mathbb{R}^d$ and norms $\|\cdot\|_V$ and $\|\cdot\|_W$, the $\mathcal{K}$-restricted Hoffman constant, denoted $H^{V, W}_{\mathcal{K}}$, is the smallest constant such that
\begin{equation}
    \dist_V(x, \mathcal{X}) \le H_{\mathcal{K}}^{V, W} \Res_W(x, \mathcal{X})
    \quad \text{for all}\; x \in \mathcal{K}.
\end{equation}
\end{definition}

The utility of \cref{def:restricted-hoffman} relies on identifying subsets $\mathcal{K}$ that accurately reflect the trajectory of the optimization algorithm. In the context of sparse regularization, there is substantial evidence that optimization paths remain confined to highly structured regions of the domain. For instance, homotopy and sequential active-set methods such as least-angle regression (LARS) construct the solution path by explicitly building up the active set step-by-step, meaning their iterates never traverse the dense ambient space~\citep{osborne2000homotopy, efron2004lars}.

For unconstrained first-order methods like the ISTA, the iterates also exhibit trajectory containment. \citet{agarwal2012fast} established that, up to a statistical tolerance, the proximal gradient iterates for the LASSO enter and remain within the restricted cone ($\alpha=3$) plus some affine component; \citet{chaudhry2026trajectory} observed empirically that the LASSO trajectory is contained within the restricted cone with $\alpha=1$. 

Separately, the optimization perspective established the phenomenon of \emph{manifold identification}. For a broad class of nonsmooth problems, including those with polyhedral regularizers such as the $\ell_1$ norm, first-order methods identify an \emph{active manifold} or support in finite time, after which the iterates evolve within a lower-dimensional structure~\citep{hare2004activeset, lee2012manifoldidentification}. This transition induces a qualitative change in the geometry of the problem: once the active set is identified, the optimization effectively proceeds with an optimal active set $\mathcal{{\hat{A}}}$ of the estimator. Under suitable nondegeneracy conditions, such as strict complementarity, this identification occurs after finitely many iterations~\citep{sun2019manifoldidentification, nutini2019activeset}; conversely, if the problem is degenerate, guaranteeing manifold identification in finite time is not guaranteed~\citep{burke1988activeset, lee2012manifoldidentification}.

Motivated by these trajectory properties, we can formalize alternative notions of the Hoffman constant by selecting canonical restricted subsets $\mathcal{K}$. To do so, let $\mathcal{P}([d])$ denote the power set of all possible active sets, and let $\mathcal{P}_s([d]) := \{\mathcal{A} \subseteq [d] : |\mathcal{A}| \le s\}$ denote the collection of all $s$-sparse active sets. We now present four regimes of interest:
\begin{enumerate}
    \item \textbf{The global regime ($\mathcal{K} = \mathbb{R}^d$).} 
    The global Hoffman constant is the maximum metric distortion over all possible active subspaces:
    \begin{equation}
        H = H_{\mathbb{R}^d} = \max_{\mathcal{A} \in \mathcal{P}([d])} H_{\mathcal{S}_{\mathcal{A}}}.
    \end{equation}
    This matches the formulation given in~\cref{def:hoffman}. As previously discussed, this worst-case metric accounts for dense, ambient inequalities and scales poorly with the ambient dimension $d$.

    \item \textbf{The $s$-sparse regime ($\mathcal{K} = \bigcup_{\mathcal{A} \in \mathcal{P}_s([d])} \mathcal{S}_{\mathcal{A}}$).} 
    By restricting $\mathcal{K}$ to the union of all $s$-sparse subspaces, we obtain the $s$-sparse Hoffman constant, which evaluates worst-case local stability across all sufficiently sparse faces:
    \begin{equation}
        H_{\text{$s$-sparse}} = H_{\bigcup_{\mathcal{A} \in \mathcal{P}_s([d])}} = \max_{\mathcal{A} \in \mathcal{P}_s([d])} H_{\mathcal{S}_{\mathcal{A}}}.
    \end{equation}

    \item \textbf{The cone-restricted regime ($\mathcal{K} = \beta^* + \mathcal{C}_{\alpha}(\mathcal{A}^*)$).} 
    By setting $\mathcal{K}$ to the restricted cone, the resulting constant $H_{\beta^* + \mathcal{C}_{\alpha}(\mathcal{A}^*)}$ bounds the optimization geometry exactly where trajectories of algorithms like the ISTA are contained. This yields a highly path-aware metric that bypasses the need for bounds over all possible sparse sets, some of which may be ill-conditioned.

    \item \textbf{The local active-set regime ($\mathcal{K} = \mathcal{S}_{\hat{\mathcal{A}}}$).} 
    Once finite manifold identification occurs, the iterates are restricted to the optimal active subspace. The restricted constant $H_{\mathcal{S}_{\hat{\mathcal{A}}}}$ characterizes the geometry of this specific, fixed polyhedral face, ultimately governing the terminal linear convergence rate of the algorithm.
\end{enumerate}
While establishing that optimization trajectories visit these restricted sets is a promising first step, evaluating the restricted constants over these regimes remains a significant challenge. Existing literature treats these constants implicitly. We now turn to our primary contribution: deriving explicit, interpretable, computable bounds for these constants to bridge the statistical and geometric properties of sparse polyhedral regularizers.

\section{Explicit Hoffman Bounds for Structured Systems}
\label{sec:explicit-hc}

In this section, we develop explicit Hoffman bounds for a class of structured polyhedral systems whose constraints admit a natural separation into affine and polyhedral (e.g., sign and box) components. Our central objective is to transform the Hoffman constant from an abstract regularity parameter into a computable quantity whose dependence on the underlying system can be read directly from its structure.

For this section, the norm $\|\cdot\|_V$ on the domain is arbitrary. Each constituent constraint system is equipped with a residual norm $\|\cdot\|_{W_i}$. For a system obtained by vertically concatenating several constituent systems, we equip the product residual space with a block-compatible $\ell_p$ norm $\|\cdot\|_W$, meaning that
\begin{equation}
    \|u\|_W = \|(u_1, \dots, u_N)\|_W = \left(\sum_{i=1}^N \|u_i\|^p_{W_i} \right)^{1/p},
\end{equation}
with the usual maximum convention when $p = \infty$. We use $H_{\mathcal{K}} := H^{V, W}_{\mathcal{K}}$ for notational brevity when the norms are clear.

Throughout, we consider a structured nonempty polyhedral solution set $\Gamma \subseteq \mathbb{R}^d$ that is expressed as the intersection of two polyhedral sets:
\begin{equation}
    \label{eq:general-sol-of-polyhedral-regression}
    \Gamma := \Gamma_{\text{aff}} \cap \Gamma_{\text{con}}\ne \emptyset,
\end{equation}
where $\Gamma_{\text{aff}} \subseteq \mathbb{R}^d$ is an affine subspace determined by a linear system
\begin{equation}
    \Gamma_{\text{aff}} := \{x \in \mathbb{R}^d \mid M x = b\},\quad M\in \mathbb{R}^{n\times d},\quad b\in\mathbb{R}^{n},
\end{equation}
and $\Gamma_{\text{con}} \subseteq \mathbb{R}^d$ is a polyhedron encoding \textit{separable} coordinate-wise constraints. That is, it is the Cartesian product of independent one-dimensional intervals
\begin{equation}
    \Gamma_{\text{con}} = \Gamma_{\text{con}}^{(1)} \times \dots \times \Gamma_{\text{con}}^{(d)},
\end{equation}
where each $\Gamma_{\text{con}}^{(i)} \subseteq \mathbb{R}$ is a closed interval (possibly infinite), and thus a polyhedral set. This includes coordinate-wise sign and box constraints. The motivating example is the KKT system of the LASSO which will be the focus of \cref{sec:application-lasso}; however, our abstract analysis applies more broadly whenever such a decomposition is available.

Because both $\Gamma_{\text{aff}}$ and $\Gamma_{\text{con}}$ are polyhedral with a nonempty intersection it follows from the theory of error bounds for set intersections~\citep[Corollary 5.26, Corollary 3, and Proposition 2.2 respectively]{bauschke1996projection, bauschke1999convexintersection, lindstrom2022errorbounds} that for any arbitrary restricted subset $\mathcal{K} \subseteq \mathbb{R}^d$, the following bound holds for all $x \in \mathcal{K}$:
\begin{equation}
    \label{eq:restricted-bounded-regularity}
    \dist_V(x, \Gamma) \le \tau_{\mathcal{K}}(\Gamma_{\text{aff}}, \Gamma_{\text{con}}) \max(\dist_V(x, \Gamma_{\text{aff}}), \dist_V(x, \Gamma_{\text{con}}))
\end{equation}
where $\tau_{\mathcal{K}} \ge 1$ is the restricted \textit{linear regularity constant} for any finite collection of sets. This constant measures the geometric angle of intersection between sets over $\mathcal{K}$.~\footnote{For two linear subspaces in Euclidean space, linear regularity is closely related to their Friedrichs angle~\citep{deutsch1995angle, bauschke1993projectionconvergence}, with the precise constant depending on how the individual distances are aggregated. For the maximum-distance formulation used here, the two coordinate axes have global linear regularity constant $\tau=\sqrt{2}$: the distance from $(1,1)$ to their intersection is $\sqrt{2}$, whereas its distance to either axis is $1$. For two distinct lines, the constant grows without bound as their acute angle tends to zero. More generally, a finite collection of polyhedral sets with nonempty intersection admits a finite linear regularity constant~\citep{robinson1981polyhedral, zhou2017errorbounds}.}

Using this result, a similar bound can be proven for the Hoffman constants of intersected systems.
\begin{proposition}[Hoffman constant of two intersected systems]
    \label{prop:hoffman-intersection}
    Let $C_1, C_2$ be polyhedral sets such that $C := C_1 \cap C_2$ is nonempty. Equip the residual spaces of the two constituent algebraic systems with norms $\|\cdot\|_{W_1}$ and $\|\cdot\|_{W_2}$ respectively, and equip the residual space of the vertically concatenated algebraic system defining $C$ with an $\ell_p$ product norm $\|\cdot\|_W$. Then
    \begin{equation}
         H_{\mathcal K}^{V,W}(C) \le \tau_{\mathcal K}(C_1,C_2) \max\left[H_{\mathcal K}^{V,W_1}(C_1), H_{\mathcal K}^{V,W_2}(C_2) \right].
    \end{equation}
\end{proposition}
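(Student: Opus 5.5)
The plan is to chain the restricted linear regularity bound \eqref{eq:restricted-bounded-regularity} with the restricted Hoffman bounds for each constituent system, and then compare the resulting maximum of residuals with the $\ell_p$ product residual.

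Fix $x \in \mathcal{K}$. Write the algebraic systems as $C_1 = \{G_1 x = h_1,\, M_1 x \le r_1\}$ and $C_2 = \{G_2 x = h_2,\, M_2 x \le r_2\}$. The concatenated system defining $C$ then has residual vector $u(x) = (u_1(x), u_2(x))$, where $u_i(x) = (G_i x - h_i, [M_i x - r_i]_+)$.

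\textbf{Step 1.} Apply the intersection error bound \eqref{eq:restricted-bounded-regularity} with $\Gamma_{\text{aff}}, \Gamma_{\text{con}}$ replaced by the general polyhedra $C_1, C_2$. The cited results (Bauschke--Borwein, Lindstrom) apply to any finite collection of polyhedra with nonempty intersection, so this substitution is legitimate. It gives
\begin{equation*}
\dist_V(x, C) \le \tau_{\mathcal{K}}(C_1, C_2) \max\bigl(\dist_V(x, C_1), \dist_V(x, C_2)\bigr).
\end{equation*}

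\textbf{Step 2.} Since $x \in \mathcal{K}$, \cref{def:restricted-hoffman} applied to each $C_i$ yields
\begin{equation*}
\dist_V(x, C_i) \le H^{V,W_i}_{\mathcal{K}}(C_i)\, \|u_i(x)\|_{W_i}.
\end{equation*}
Bounding each constant by the larger of the two gives
\begin{equation*}
\max_i \dist_V(x, C_i) \le \max_i H^{V,W_i}_{\mathcal{K}}(C_i) \cdot \max_i \|u_i(x)\|_{W_i}.
\end{equation*}

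\textbf{Step 3.} For any $p \in [1, \infty]$,
\begin{equation*}
\max_i \|u_i\|_{W_i} \le \Bigl(\sum_i \|u_i\|_{W_i}^p\Bigr)^{1/p} = \|u\|_W,
\end{equation*}
with equality when $p = \infty$. We also need to check that the residual of the concatenated system equals the concatenation of the constituent residuals. This holds because $[\cdot]_+$ acts entrywise and the rows are simply stacked, so $\Res_W(x, C) = \|u(x)\|_W$.

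Combining Steps 1--3 gives
\begin{equation*}
\dist_V(x, C) \le \tau_{\mathcal{K}} \max_i H^{V,W_i}_{\mathcal{K}}(C_i)\, \Res_W(x, C) \quad \text{for all } x \in \mathcal{K}.
\end{equation*}
Since $H^{V,W}_{\mathcal{K}}(C)$ is by definition the smallest constant with this property, the claimed inequality follows. (The statement writes $H^{V,W_i}$ on the right; that is the intended meaning.)

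The main obstacle is Step 1, namely justifying that the restricted regularity constant $\tau_{\mathcal{K}}$ exists and is finite for general polyhedra $C_1, C_2$ and an arbitrary norm $V$. I would handle this by defining $\tau_{\mathcal{K}}$ as the smallest constant satisfying the inequality on $\mathcal{K}$. It is at most the global constant, and the global constant is finite by polyhedral linear regularity (Robinson; Bauschke--Borwein), transported to $\|\cdot\|_V$ by norm equivalence in finite dimensions. The remaining steps are routine.
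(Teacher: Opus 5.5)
Your proof is correct and follows the paper's argument essentially step for step. Like the paper, you apply the restricted linear regularity bound \eqref{eq:restricted-bounded-regularity} to $C_1, C_2$, insert the individual restricted Hoffman bounds via $\max(Aa,Bb)\le\max(A,B)\max(a,b)$, and finish with $\max_i \Res_{W_i}(x,C_i)\le \Res_W(x,C)$ for the $\ell_p$ product norm; your extra justification that $\tau_{\mathcal K}$ is finite (via polyhedral linear regularity and norm equivalence) is something the paper simply takes from the cited literature.
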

\begin{proof}
    By the definition of the individual restricted Hoffman constants, the distance from any point $x \in \mathcal{K}$ to each constituent set is bounded by its respective scalar residual:
    \begin{equation}
        \dist_V(x,C_1) \le H_{\mathcal K}^{V,W_1}(C_1) \Res_{W_1}(x,C_1) \quad \text{and} \quad \dist_V(x,C_2) \le H_{\mathcal K}^{V,W_2}(C_2) \Res_{W_2}(x,C_2).
    \end{equation}
    Thus, using the intersection bound as in \eqref{eq:restricted-bounded-regularity} and substituting the Hoffman constant upper bounds yields:
    \begin{align}
        \dist_V(x,C)
        &\le
        \tau_{\mathcal K}(C_1,C_2)
        \max\left[
            \dist_V(x,C_1),
            \dist_V(x,C_2)
        \right] \nonumber \\
        &\le
        \tau_{\mathcal K}(C_1,C_2)
        \max\left[
            H_{\mathcal K}^{V,W_1}(C_1)\Res_{W_1}(x,C_1),
            H_{\mathcal K}^{V,W_2}(C_2)\Res_{W_2}(x,C_2)
        \right] \nonumber \\
        &\le
        \tau_{\mathcal K}(C_1,C_2)
        \max\left[
            H_{\mathcal K}^{V,W_1}(C_1),
            H_{\mathcal K}^{V,W_2}(C_2)
        \right]
        \max\left[
            \Res_{W_1}(x,C_1),
            \Res_{W_2}(x,C_2)
        \right],
    \end{align}
    where the final line uses the property that $\max(A a, B b) \le \max(A, B) \max(a, b)$ for all nonnegative operands.

    Because the algebraic representation of $C$ is obtained by vertically concatenating the systems defining $C_1$ and $C_2$, its composite
    residual satisfies, for $1\le p<\infty$,
    \begin{equation}
        \Res_W(x,C)
        =
        \left(
            \Res_{W_1}(x,C_1)^p
            +
            \Res_{W_2}(x,C_2)^p
        \right)^{1/p}.
    \end{equation}
    Consequently,
    \begin{equation}
        \max\left[
            \Res_{W_1}(x,C_1),
            \Res_{W_2}(x,C_2)
        \right]
        \le
        \Res_W(x,C).
    \end{equation}
    When $p=\infty$, this relationship holds with equality. Therefore,
    \begin{equation}
        \dist_V(x,C)
        \le
        \tau_{\mathcal K}(C_1,C_2)
        \max\left[
            H_{\mathcal K}^{V,W_1}(C_1),
            H_{\mathcal K}^{V,W_2}(C_2)
        \right]
        \Res_W(x,C).
    \end{equation}
    Since this holds for every $x\in\mathcal K$, the definition of the
    restricted Hoffman constant yields the claimed bound.
\end{proof}
A few remarks should be made about this result. Firstly, the sets $C_1$ and $C_2$ are more general than the sets $\Gamma_{\text{aff}}$ and $\Gamma_{\text{con}}$: this result holds without the added structure of polyhedral sparse regression problems. Secondly, the result extends to intersections of finitely many systems equipped with an arbitrary product residual norm, as recorded below for completeness.

\begin{lemma}[Hoffman constant of finitely many intersected systems]
\label{lem:hoffman-constants-finitely-many}
    Let $C_1, \dots, C_N$ be a finite collection of polyhedral sets and suppose that $C := \bigcap_{i=1}^N C_i$ is nonempty. Let $\|\cdot\|_V$ be the norm on the ambient space, and let $\|\cdot\|_{W_i}$ be the residual norm associated with $C_i$. Suppose each constituent system has restricted Hoffman constant $H_{\mathcal K}^{V,W_i}(C_i)$.

    Equip the product residual space $W_1 \oplus \cdots \oplus W_N$ with a norm $\|\cdot\|_W$, and define
    \begin{equation}
    \kappa_W := \\sup_{\substack{(u_1,\dots,u_N)\neq 0\\u_i\in W_i}} \frac{\max_{1\leq i\leq N}\|u_i\|_{W_i}}{\|(u_1,\dots,u_N)\|_W}.
    \end{equation}
    Then the restricted Hoffman constant of the intersection $C$ is bounded by:
    \begin{equation}
        H^{V, W}_{\mathcal{K}}(C) \le \kappa_W  \tau_{\mathcal{K}}(C_1, \dots, C_N) \max_{1 \le i \le N} H^{V, W_i}_{\mathcal{K}}(C_i),
    \end{equation}
    where $\tau_{\mathcal{K}}(C_1, \dots, C_N)$ is the restricted linear regularity constant for the collection.
\end{lemma}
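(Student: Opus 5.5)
The plan is to mirror the proof of \cref{prop:hoffman-intersection}, replacing the two-set intersection bound with its $N$-set analogue and the $\ell_p$ residual comparison with the constant $\kappa_W$.

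\textbf{Step 1 (intersection bound).} Since each $C_i$ is polyhedral and $C=\bigcap_i C_i\neq\emptyset$, the finite-collection version of \eqref{eq:restricted-bounded-regularity} applies. This is the same linear regularity results for finitely many polyhedral sets cited before \cref{prop:hoffman-intersection}, which guarantee a finite constant. It gives, for all $x\in\mathcal K$,
\begin{equation*}
\dist_V(x,C)\le \tau_{\mathcal K}(C_1,\dots,C_N)\max_{1\le i\le N}\dist_V(x,C_i).
\end{equation*}
The only care needed here is to confirm that $\tau_{\mathcal K}(C_1,\dots,C_N)$ is defined as the smallest such constant with the max-aggregation, which matches the statement.

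\textbf{Step 2 (constituent Hoffman bounds).} For each $i$ and each $x\in\mathcal K$, the definition of the restricted Hoffman constant gives
\begin{equation*}
\dist_V(x,C_i)\le H^{V,W_i}_{\mathcal K}(C_i)\,\Res_{W_i}(x,C_i).
\end{equation*}
Then use $\max_i a_ib_i\le(\max_i a_i)(\max_i b_i)$ for nonnegative $a_i,b_i$ to obtain
\begin{equation*}
\dist_V(x,C)\le\tau_{\mathcal K}\max_i H^{V,W_i}_{\mathcal K}(C_i)\max_i\Res_{W_i}(x,C_i).
\end{equation*}

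\textbf{Step 3 (residual aggregation).} The algebraic system defining $C$ is the vertical concatenation of the systems for the $C_i$. Hence its residual vector is $u(x)=(u_1(x),\dots,u_N(x))$, where $u_i(x)$ is the residual vector of the $i$-th system. This gives $\Res_{W_i}(x,C_i)=\|u_i(x)\|_{W_i}$ and $\Res_W(x,C)=\|u(x)\|_W$. By the definition of $\kappa_W$, $\max_i\|u_i\|_{W_i}\le\kappa_W\|u\|_W$ whenever $u\neq0$. If $u(x)=0$, then every $\Res_{W_i}(x,C_i)=0$, so the right-hand side of Step 2 vanishes and the bound holds trivially. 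Substituting yields $\dist_V(x,C)\le\kappa_W\tau_{\mathcal K}\max_iH^{V,W_i}_{\mathcal K}(C_i)\Res_W(x,C)$ for all $x\in\mathcal K$. Minimality of $H^{V,W}_{\mathcal K}(C)$ then gives the claim.

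\textbf{Main obstacle.} The main obstacle is not the algebra but Step 1. One must ensure the $N$-set linear regularity inequality holds on the restricted set $\mathcal K$ with a finite constant. The natural choice is to define $\tau_{\mathcal K}$ as the supremum over $x\in\mathcal K\setminus C$ of $\dist_V(x,C)/\max_i\dist_V(x,C_i)$. Its finiteness then follows from the global constant for finitely many polyhedra, since restricting to $\mathcal K$ can only decrease the supremum. Two sanity checks on $\kappa_W$ are worth noting. First, $\kappa_W$ is finite, because all norms on a finite-dimensional space are equivalent. Second, $\kappa_W=1$ for block $\ell_p$ norms, which recovers \cref{prop:hoffman-intersection}.
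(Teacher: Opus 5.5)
Your proposal is correct and follows the paper's proof essentially step for step. Both arguments use the $N$-set linear regularity inequality on $\mathcal K$, then the constituent restricted Hoffman bounds combined via $\max_i a_i b_i \le (\max_i a_i)(\max_i b_i)$, then the residual comparison $\max_i \Res_{W_i}(x,C_i) \le \kappa_W \Res_W(x,C)$ from the definition of $\kappa_W$; your extra points (the zero-residual case, finiteness of $\tau_{\mathcal K}$ via the global polyhedral constant, and finiteness of $\kappa_W$ by norm equivalence) are sound details that the paper leaves implicit.
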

\begin{proof}
By the linear regularity condition for finite collections of polyhedral sets~\citep[Corollary 5.26]{bauschke1996projection}, for every $x\in\mathcal K$, restricted linear regularity gives
\begin{equation}
    \dist_V(x,C)
    \leq
    \tau_{\mathcal K}(C_1,\dots,C_N)
    \max_{1\leq i\leq N}\dist_V(x,C_i).
\end{equation}
Applying the individual restricted Hoffman bounds yields
\begin{align}
    \dist_V(x,C)
    &\leq
    \tau_{\mathcal K}(C_1,\dots,C_N)
    \max_{1\leq i\leq N}
    \left[
        H_{\mathcal K}^{V,W_i}(C_i)
        \Res_{W_i}(x,C_i)
    \right] \nonumber \\
    &\leq
    \tau_{\mathcal K}(C_1,\dots,C_N)
    \left(
        \max_{1\leq i\leq N}
        H_{\mathcal K}^{V,W_i}(C_i)
    \right)
    \max_{1\leq i\leq N}
    \Res_{W_i}(x,C_i).
\end{align}
By the definition of $\kappa_W$,
\begin{equation}
    \max_{1\leq i\leq N}\Res_{W_i}(x,C_i)
    \leq
    \kappa_W
    \Res_W(x,C),
\end{equation}
where the combined residual is formed by concatenating the constituent residuals. Consequently,
\begin{equation}
    \dist_V(x,C)
    \leq
    \kappa_W
    \tau_{\mathcal K}(C_1,\dots,C_N)
    \left(
        \max_{1\leq i\leq N}
        H_{\mathcal K}^{V,W_i}(C_i)
    \right)
    \Res_W(x,C),
\end{equation}
which proves the result. 

In particular, if the combined residual norm is the $\ell_p$ product norm
\begin{equation}
    \|(u_1,\dots,u_N)\|_W
    :=
    \left(
        \sum_{i=1}^N \|u_i\|_{W_i}^p
    \right)^{1/p},
    \qquad 1\leq p<\infty,
\end{equation}
or the usual maximum convention when $p = \infty$, then $\kappa_W=1$.
\end{proof}

Applying \cref{prop:hoffman-intersection} to the solution set \eqref{eq:general-sol-of-polyhedral-regression} of polyhedral regression isolates three geometric mechanisms dictating the stability of the system:
\begin{enumerate}
    \item[(i)] an \emph{affine component}, governed by the Hoffman constant $H_{\mathcal{K}}(\Gamma_{\text{aff}})$ of the linear system. Consequently, this depends on the conditioning (e.g., spectral properties) of $M$;
    \item[(ii)] a \emph{polyhedral component}, governed by the Hoffman constant $H_{\mathcal{K}}(\Gamma_{\text{con}})$ of the polyhedral constraint system. Consequently, this depends on the geometry of the corresponding polyhedron;
    \item[(iii)] an \emph{interaction component}, $\tau_{\mathcal{K}}(\Gamma_{\text{aff}}, \Gamma_{\text{con}})$, measuring how cleanly the affine subspace and the polyhedral constraints intersect.
\end{enumerate}
We now discuss methods for controlling each component individually.

\subsection{The Polyhedral Contribution}

The set $\Gamma_{\text{con}}$ encodes the permissible bounds for the primal and dual variables (i.e., subgradients) imposed by the regularization scheme. A common feature for many of these structured systems is that $\Gamma_{\text{con}}$ is \emph{separable}, taking the form of a Cartesian product of independent one-dimensional intervals: $\Gamma_{\text{con}} = \Gamma_{\text{con}}^{(1)} \times \dots \times \Gamma_{\text{con}}^{(d)}$, where each $\Gamma_{\text{con}}^{(i)} \subseteq \mathbb{R}$ is polyhedral.

Given that $\tau_{\mathcal{K}}(C_{1},C_{2})$ can be calculated easily when the sets $C_{1},C_{2}$ are orthogonal, it is possible to use the result in \cref{lem:hoffman-constants-finitely-many} to give an upper bound on $H_{\mathcal{K}}(\Gamma_{\text{con}})$ if bounds for each $H_{\mathcal{K}}(\Gamma_{\text{con}}^{(i)})$ can be obtained. However, it turns out that there is a more direct approach to find $H_{\mathcal{K}}(\Gamma_{\text{con}})$ exactly when a general absolute norm is used~\citep[Definition 5.4.18 and Theorem 5.4.19]{horn2012matrix}.

\begin{proposition}[Hoffman constant of separable polyhedral constraints]
    \label{prop:hoffman-separable-polyhedral}
    Let the norm $\|\cdot\|_V = \|\cdot\|_W$ be an absolute norm on $\mathbb{R}^d$ (i.e., $\|x\|_V = f(|x_1|, \dots, |x_d|)$ for some nondecreasing function $f$). Then, for any subset $\mathcal{K} \subseteq \mathbb{R}^d$ such that $\Gamma_{\text{con}} \subset \mathcal{K}$, the restricted Hoffman constant of $\Gamma_{\text{con}}$ is
    \begin{equation}
        H_{\mathcal{K}}(\Gamma_{\text{con}}) = 1.
    \end{equation}
\end{proposition}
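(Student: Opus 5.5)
We show that for every $x$ the distance to $\Gamma_{\text{con}}$ equals its residual. We fix the natural algebraic description of $\Gamma_{\text{con}}$: each interval $\Gamma_{\text{con}}^{(i)} = [\ell_i, u_i]$, with $\ell_i \in \mathbb{R}\cup\{-\infty\}$ and $u_i\in\mathbb{R}\cup\{+\infty\}$, is encoded by the coordinate inequalities $x_i \le u_i$ and $-x_i \le -\ell_i$, dropping infinite ones. An equality constraint appears when $\ell_i = u_i$. With this encoding, the residual vector has at most one nonzero entry per coordinate, namely $\rho_i(x) := \dist(x_i, [\ell_i,u_i]) = [x_i - u_i]_+ + [\ell_i - x_i]_+$. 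This holds because both inequalities cannot be violated simultaneously. We then identify $\Res_W(x,\Gamma_{\text{con}})$ with $\|\rho(x)\|_W$, where $\rho(x) = (\rho_1(x),\dots,\rho_d(x))$. Since the norm is absolute, splitting a coordinate's residual into two slots only reshuffles zeros. So we read the residual as the $d$-vector $\rho(x)$; this bookkeeping is the step where care is needed.

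\textbf{Upper bound.} Define the candidate projection $\bar x$ coordinatewise by clipping, $\bar x_i := \min(\max(x_i,\ell_i),u_i)$. Then $\bar x \in \Gamma_{\text{con}}$ by separability, and $|x_i - \bar x_i| = \rho_i(x)$ for each $i$. Hence
\begin{equation}
\dist_V(x,\Gamma_{\text{con}}) \le \|x-\bar x\|_V = f(|x_1-\bar x_1|,\dots,|x_d-\bar x_d|) = f(\rho_1(x),\dots,\rho_d(x)) = \Res_W(x,\Gamma_{\text{con}}).
\end{equation}
Taking $x\in\mathcal K$ gives $H_{\mathcal K}(\Gamma_{\text{con}})\le 1$.

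\textbf{Lower bound.} For any $x'\in\Gamma_{\text{con}}$ we have $|x_i - x'_i| \ge \rho_i(x)$ coordinatewise, since $x'_i\in[\ell_i,u_i]$. Because $f$ is nondecreasing in each absolute coordinate (monotonicity of absolute norms, Horn--Johnson Theorem 5.4.19), we get $\|x-x'\|_V \ge \|\rho(x)\|_V$. So the clipped point is in fact a $V$-nearest point, and $\dist_V(x,\Gamma_{\text{con}}) = \Res_W(x,\Gamma_{\text{con}})$ holds with equality for every $x$.

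To conclude $H_{\mathcal K}=1$ rather than merely $\le 1$, we need some $x\in\mathcal K$ with positive residual. Otherwise the constant is vacuous, and the smallest admissible constant would be $0$. The hypothesis $\Gamma_{\text{con}}\subset\mathcal K$ is meant to secure this, so we read the inclusion as strict, providing a point of $\mathcal K$ outside $\Gamma_{\text{con}}$. For such a point the ratio of distance to residual equals $1$ exactly, so the infimal constant is $1$. The main obstacle is this degenerate case together with the residual-encoding convention for two-sided box constraints. Both are handled by the remarks above, and the rest is direct.
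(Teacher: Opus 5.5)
Your proof is correct and follows essentially the same route as the paper. Coordinatewise clipping together with monotonicity of the absolute norm gives $\dist_V(x,\Gamma_{\text{con}}) = \Res_W(x,\Gamma_{\text{con}})$ for every $x$, and a point of $\mathcal{K}\setminus\Gamma_{\text{con}}$ pins the constant at exactly $1$; the paper likewise reads $\subset$ as strict inclusion to obtain that point.

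Your explicit treatment of the two-sided box encoding makes precise the paper's implicit convention that each coordinate's residual is its distance to the interval. That convention is the only reading compatible with $W = V$ being a norm on $\mathbb{R}^d$. For a general absolute norm on a doubled residual space, your ``reshuffling zeros'' remark would need symmetry between paired slots, so the convention, not absoluteness, does that work.
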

\begin{proof}
    By the definition of the Cartesian product and the fact that $f$ is nondecreasing in every argument (monotonicity), the distance from any point $x$ to the set $\Gamma_{\text{con}}$ is achieved by projecting each coordinate independently. Thus, the distance evaluates to the norm of the vector of individual orthogonal distances:
    \begin{equation}
        \dist_V(x, \Gamma_{\text{con}}) = f\left(\dist_V(x_1, \Gamma_{\text{con}}^{(1)}), \dots, \dist_V(x_d, \Gamma_{\text{con}}^{(d)})\right).
    \end{equation}
    Similarly, for a system of independent one-dimensional constraints $x_i \in \Gamma_{\text{con}}^{(i)}$, the scalar residual for each coordinate is the distance to the interval: $\Res_W(x_i, \Gamma_{\text{con}}^{(i)}) = \dist_V(x_i, \Gamma_{\text{con}}^{(i)})$.

    The $\mathcal{K}$-restricted Hoffman inequality requires $\dist_V(x, \Gamma_{\text{con}}) \le H_{\mathcal{K}}(\Gamma_{\text{con}}) \Res_W(x, \Gamma_{\text{con}})$ for all $x \in \mathcal{K}$. Because the joint residual is evaluated under the same absolute norm $\|\cdot\|_W = \|\cdot\|_V$, we have
    \begin{equation}
        \Res_W(z, \Gamma_{\text{con}}) = f\left(\Res_W(z_1, \Gamma_{\text{con}}^{(1)}), \dots, \Res_W(z_m, \Gamma_{\text{con}}^{(d)})\right).
    \end{equation}
    Substituting the coordinate-wise equality of distances and residuals, we obtain
    \begin{equation}
        \label{eq:separable-dist-eq-con}
        \dist_V(z, \Gamma_{\text{con}}) = \Res_W(z, \Gamma_{\text{con}}) \quad \text{for all } x \in \mathcal{K}.
    \end{equation}
    Substituting this equality into the definition of the restricted Hoffman bound yields
    \begin{equation}
        \dist_V(x, \Gamma_{\text{con}}) \le H_{\mathcal{K}}(\Gamma_{\text{con}}) \dist_V(x, \Gamma_{\text{con}}) \quad \text{for all } x \in \mathcal{K}.
    \end{equation}
    For any point $x \in \mathcal{K}$ outside the feasible set where $\dist_V(x, \Gamma_{\text{con}}) > 0$, which exists as $\Gamma_{\text{con}} \subset \mathcal{K}$, dividing both sides by the distance provides the bound $1 \le H_{\mathcal{K}}(\Gamma_{\text{con}})$. Since the Hoffman inequality holds exactly for $H_{\mathcal{K}}(\Gamma_{\text{con}})\ = 1$, and the constant is defined as the smallest such value, it follows that $H_{\mathcal{K}}(\Gamma_{\text{con}}) = 1$.
\end{proof}

Notably, \cref{prop:hoffman-separable-polyhedral} applies to weighted $\ell_p$ norms. A consequence of this result is that the geometry of separable box and sign constraints never intrinsically worsens the conditioning of the optimization problem. Furthermore, the compatibility with weighted norms demonstrates that feature-scaling interventions, such as the adaptive LASSO~\citep{zou2006adaptivelasso}, do not alter the Hoffman constant of the polyhedral component, as the weights scale the domain distance and the residual identically.

\subsection{The Geometric Interaction Contribution}

Evaluating the bounded regularity constant $\tau_{\mathcal{K}}(\Gamma_{\text{aff}}, \Gamma_{\text{con}})$ for arbitrary polyhedral intersections is generally nontrivial. Geometrically, it captures the maximum sharpness or metric distortion created where the affine subspace slices through the boundary of the polyhedral constraints, which can be made arbitrarily pathological.

However, in the context of regularized regression, standard nondegeneracy conditions (such as signed general position~\citep{tibshirani2012lassouniqueness}) force the active linear subsystem to possess a unique solution that satisfies the constraints. For example, the LASSO solution is unique almost surely for a random design matrix $A$ from a continuous distribution~\citep{tibshirani2012lassouniqueness}. In these cases, the intersection of the restricted sets reduces to a unique point and the geometric interaction penalty vanishes.
\begin{proposition}[Linear regularity for linear systems with unique solutions]
    \label{prop:bounded-regularity}
    Suppose the linear system $Mx=b$ has a unique solution $\bar{x}$ and thus $\Gamma_{\text{aff}} = \{\bar{x}\}$. If $\bar{x} \in \Gamma_{\text{con}}$ such that the intersection is the nonempty singleton $\Gamma := \Gamma_{\text{aff}} \cap \Gamma_{\text{con}} = \{\bar{x}\}$, then for any norm $\|\cdot\|_V$ and any restricted domain $\mathcal{K}$ with $\mathcal K \backslash \{\bar{x}\} \neq \emptyset$, the linear regularity constant $\tau_{\mathcal{K}}(\Gamma_{\text{aff}}, \Gamma_{\text{con}}) = 1$.
\end{proposition}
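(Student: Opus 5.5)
The argument is a direct two-sided estimate. Because the intersection is the singleton $\{\bar{x}\}$ and $\Gamma_{\text{aff}}$ is also the singleton $\{\bar{x}\}$, we have $\Gamma = \Gamma_{\text{aff}}$. Hence for every $x \in \mathcal{K}$,
\begin{equation}
    \dist_V(x,\Gamma) = \|x - \bar{x}\|_V = \dist_V(x,\Gamma_{\text{aff}}) \le \max\left(\dist_V(x,\Gamma_{\text{aff}}), \dist_V(x,\Gamma_{\text{con}})\right).
\end{equation}
This shows that inequality \eqref{eq:restricted-bounded-regularity} holds with constant $1$ on $\mathcal{K}$, so $\tau_{\mathcal{K}}(\Gamma_{\text{aff}},\Gamma_{\text{con}}) \le 1$. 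Nothing about the norm beyond the definition of distance is used, which is why the statement holds for an arbitrary $\|\cdot\|_V$.

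For the reverse inequality, recall that $\tau_{\mathcal{K}}$ is defined as the smallest constant making \eqref{eq:restricted-bounded-regularity} hold on $\mathcal{K}$; the paper also records $\tau_{\mathcal{K}} \ge 1$. I would show the lower bound directly rather than rely on that convention. Since $\mathcal{K}\setminus\{\bar{x}\} \neq \emptyset$, pick any $x \in \mathcal{K}$ with $x \neq \bar{x}$. Then $\dist_V(x,\Gamma) = \|x-\bar{x}\|_V > 0$. Moreover, $\Gamma \subseteq \Gamma_{\text{aff}}$ and $\Gamma \subseteq \Gamma_{\text{con}}$ give $\dist_V(x,\Gamma_{\text{con}}) \le \dist_V(x,\Gamma) = \dist_V(x,\Gamma_{\text{aff}})$, so the maximum on the right-hand side of \eqref{eq:restricted-bounded-regularity} equals $\dist_V(x,\Gamma)$ exactly. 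Any admissible constant $\tau$ must therefore satisfy $\dist_V(x,\Gamma) \le \tau\,\dist_V(x,\Gamma)$. Dividing by the positive distance gives $\tau \ge 1$, and combined with the first paragraph, $\tau_{\mathcal{K}} = 1$.

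There is no real obstacle. The only points needing care are that the nondegeneracy hypothesis $\mathcal{K}\setminus\{\bar{x}\}\neq\emptyset$ is exactly what makes the lower bound nonvacuous (otherwise the infimum would be $0$ or undefined), and that closedness of $\Gamma_{\text{con}}$ is not even needed, since only set inclusions are used.
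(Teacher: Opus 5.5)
Your proposal is correct and follows the paper's proof essentially step for step. The upper bound comes from $\Gamma = \Gamma_{\text{aff}} = \{\bar{x}\}$, and the lower bound from the inclusions $\Gamma \subseteq \Gamma_{\text{aff}}$ and $\Gamma \subseteq \Gamma_{\text{con}}$, which force the maximum to be at most $\dist_V(x,\Gamma)$. Your version is slightly more careful: you explicitly pick $x \in \mathcal{K}\setminus\{\bar{x}\}$ so that dividing by $\dist_V(x,\Gamma)>0$ is justified, whereas the paper leaves this use of the hypothesis implicit and only mentions the degenerate case $\mathcal{K}\subseteq\{\bar{x}\}$ afterwards.
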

\begin{proof}
    Because $\Gamma_{\text{aff}} = \{\bar{x}\}$ is a singleton and $\bar{x} \in \Gamma_{\text{con}}$ by assumption, it follows immediately that the intersection is $\Gamma = \{\bar{x}\}$. Therefore, for any arbitrary point $x \in \mathcal{K}$, the distance to the intersection is equal to the distance to the affine set:
    \begin{equation}
        \dist_V(x, \Gamma) = \|x - \bar{x}\|_V = \dist_V(x, \Gamma_{\text{aff}}).
    \end{equation}
    By the properties of the maximum operator, this equality implies:
    \begin{equation}
        \dist_V(x, \Gamma) \le \max \left( \dist_V(x, \Gamma_{\text{aff}}), \dist_V(x, \Gamma_{\text{con}}) \right).
    \end{equation}
    By the definition of the linear regularity condition \eqref{eq:restricted-bounded-regularity}, this establishes the upper bound $\tau_{\mathcal{K}} \le 1$. 
    
    Conversely, the reverse bound $\tau_{\mathcal{K}} \ge 1$ holds universally for \textit{any} intersection. Because $\Gamma \subseteq \Gamma_{\text{aff}}$ and $\Gamma \subseteq \Gamma_{\text{con}}$, the distance from any point $x$ to the intersection is naturally bounded below by the distance to either constituent set: $\dist_V(x, \Gamma_{\text{aff}}) \le \dist_V(x, \Gamma)$ and $\dist_V(x, \Gamma_{\text{con}}) \le \dist_V(x, \Gamma)$. Hence, we have
    \begin{equation}
        \max \left( \dist_V(x, \Gamma_{\text{aff}}), \dist_V(x, \Gamma_{\text{con}}) \right) \le \dist_V(x, \Gamma),
    \end{equation}
    and conclude $\tau_{\mathcal{K}}(\Gamma_{\text{aff}}, \Gamma_{\text{con}}) = 1$. If $\mathcal{K} \subseteq \{ \bar{x} \}$, then $\tau_{\mathcal K}(\Gamma_{\text{aff}}, \Gamma_{\text{con}})$ can be 0.
\end{proof}

Under these uniqueness conditions and an absolute norm shared over the domain and residual space, the decomposition in \cref{prop:hoffman-intersection} can be greatly simplified. Substituting $H_{\mathcal{K}}(\Gamma_{\text{con}}) = 1$ (\cref{prop:hoffman-separable-polyhedral}) and $\tau_{\mathcal{K}}(\Gamma_{\text{aff}}, \Gamma_{\text{con}}) = 1$ (\cref{prop:bounded-regularity}) into our intersected system bound (\cref{prop:hoffman-intersection}) reduces the problem to dependence on the Hoffman constant of the affine space
\begin{equation}
    \label{eq:hoffman-uniqueness-sep-bound}
    H_{\mathcal{K}}(\Gamma) \le \max \left(H_{\mathcal{K}}(\Gamma_{\text{aff}}), 1\right). 
\end{equation}
For practical purposes, the bound \eqref{eq:hoffman-uniqueness-sep-bound} applies to the analysis of sufficiently sparse active sets where the $\ell_p$ norm is used in both the domain and residual space (e.g., $H^{2, 2}, H^{1, 1}$, etc.).

\subsection{The Affine Contribution}

It is left to discuss the Hoffman constant of the affine component $\Gamma_{\text{aff}}$, and consequently the conditioning of the algebraic system $Mx = b$. Evaluating the restricted constant $H_{\mathcal{K}}(\Gamma_{\text{aff}})$ amounts to measuring the worst-case metric distortion of the active linear subsystem over the specified restricted domain $\mathcal{K}$.

For a nonempty affine system $\Gamma_{\mathrm{aff}} := \{x\in\mathbb{R}^d: Mx=b\} \neq \emptyset$, the exact global Hoffman constant is characterized by the quotient-space modulus
\begin{equation}
    \label{eq:affine-operator-norm}
    H_{\mathbb{R}^d}^{V,W}(\Gamma_{\mathrm{aff}}) =
    \sup_{\substack{r\in\range M\\r\neq 0}} \frac{\inf\{\|v\|_V:Mv=r\}}{\|r\|_W}.
\end{equation}
Indeed, if $r=Mx-b$, then
\begin{equation}
    \dist_V(x,\Gamma_{\mathrm{aff}}) = \inf\{\|v\|_V:Mv=r\}.
\end{equation}
This naturally upper-bounds the Hoffman constant over any restricted domain $\mathcal{K} \subseteq \mathbb{R}^d$. Therefore:
\begin{equation}
    H_{\mathcal K}^{V,W}(\Gamma_{\mathrm{aff}}) \le H_{\mathbb{R}^d}^{V,W}(\Gamma_{\mathrm{aff}}).
\end{equation}

For Euclidean domain and residual norms, the minimum-norm preimage is $M^\dagger r$, and hence
\begin{equation}
    H_{\mathbb{R}^d}^{2,2}(\Gamma_{\mathrm{aff}}) = \|M^\dagger\|_2.
\end{equation}
More generally, if $M$ is invertible, then for arbitrary norms
$\|\cdot\|_V$ and $\|\cdot\|_W$,
\begin{equation}
    H_{\mathbb{R}^d}^{V,W}(\Gamma_{\mathrm{aff}}) = \|M^{-1}\|_{W\to V} := \sup_{u\neq 0}
    \frac{\|M^{-1}u\|_V}{\|u\|_W}.
\end{equation}
Therefore, when $M$ is invertible and the conditions of
\eqref{eq:hoffman-uniqueness-sep-bound} hold, the composite system satisfies
\begin{equation}
    H_{\mathcal K}^{V,W}(\Gamma) \le \max\left( \|M^{-1}\|_{W\to V}, 1 \right).
\end{equation}

This variational characterization allows the geometric framework to accommodate different statistical and algorithmic metrics. In
particular, when $M$ is invertible and the domain and residual spaces are equipped with the same standard $\ell_p$ norm, the affine Hoffman constant reduces to the corresponding induced matrix norm $\|M^{-1}\|_p$~\citep{horn2012matrix}:
\begin{itemize}
    \item \textbf{Euclidean/spectral geometry ($p=2$):}
    \begin{equation}
        \|M^{-1}\|_2 = \frac{1}{\sigma_{\min}(M)},
    \end{equation}
    noting that we can replace $\sigma^+_{\min}(M)$ with $\sigma_{\min}(M)$ as $M$ is invertible.

    \item \textbf{Manhattan geometry ($p=1$):}
    \begin{equation}
        \|M^{-1}\|_1
        =
        \max_j\sum_i |(M^{-1})_{ij}|,
    \end{equation}
    which is the maximum absolute column sum of $M^{-1}$.

    \item \textbf{Chebyshev geometry ($p=\infty$):}
    \begin{equation}
        \|M^{-1}\|_\infty
        =
        \max_i\sum_j |(M^{-1})_{ij}|,
    \end{equation}
    which is the maximum absolute row sum of $M^{-1}$.
\end{itemize}

In \cref{sec:application-lasso}, we explicitly construct the active KKT matrix $M_{\mathcal A}$ for the LASSO. When this is invertible, the preceding induced-norm characterization applies directly. By exploiting its block structure, we show how the affine KKT Hoffman constant captures both active-set curvature and active-inactive feature interaction. We then derive explicit spectral bounds for $\|M_{\mathcal A}^{-1}\|_2$ whose dependence on the sparsity, ambient dimension, and structural properties of the design is transparent. In \cref{sec:hoffman-other-norms}, we extend this analysis to the induced $\ell_1$ and $\ell_\infty$ norms.

\subsection{Monotonicity Properties for Hoffman Bounds}

We conclude this section by stating some monotonicity properties of Hoffman constants, which allow certain Hoffman-type bounds to be transferred across different norms. To the best of our knowledge, these results are not stated explicitly in the variational analysis literature, which is primarily concerned with the existence of Hoffman constants rather than their specific magnitudes.

To emphasize that these properties are fundamental to the variational analysis of polyhedral systems and do not rely on the specific decomposition assumed in \eqref{eq:general-sol-of-polyhedral-regression}, we state them here for an arbitrary nonempty polyhedral set $\mathcal{X}$ and restricted domain $\mathcal{K}$.

\begin{proposition}[Monotonicity of the Hoffman constant in the domain norm]
    \label{prop:hoffman-monotonicity-domain}
    Let $\mathcal{X} \subseteq \mathbb{R}^d$ be a nonempty polyhedral solution set and $\mathcal{K} \subseteq \mathbb{R}^d$ be a restricted domain. Suppose $\|\cdot\|_V$ and $\|\cdot\|_{V'}$ are norms on the domain such that for all $x \in \mathcal{K}$ and $x' \in \mathcal{X}$, $\|x-x'\|_{V'} \le \|x-x'\|_V$. Then for any residual space norm $\|\cdot\|_W$:
    \begin{equation}
        H_{\mathcal{K}}^{V', W}(\mathcal{X}) \le H_{\mathcal{K}}^{V, W}(\mathcal{X}).
    \end{equation}
\end{proposition}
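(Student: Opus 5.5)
The plan is to compare the two distance functions pointwise on $\mathcal{K}$ and then pass the comparison through the definition of the restricted Hoffman constant (\cref{def:restricted-hoffman}). The residual $\Res_W(x,\mathcal{X})$ depends only on the algebraic system defining $\mathcal{X}$ and on $\|\cdot\|_W$, not on the domain norm. So the same residual appears on the right-hand side for both constants, and only the left-hand sides need to be compared.

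\textbf{Step 1 (pointwise distance comparison).} Fix $x \in \mathcal{K}$. For every $x' \in \mathcal{X}$ the hypothesis gives $\|x-x'\|_{V'} \le \|x-x'\|_V$. Taking the infimum over $x' \in \mathcal{X}$ on both sides yields
\begin{equation}
    \dist_{V'}(x,\mathcal{X}) = \inf_{x'\in\mathcal{X}}\|x-x'\|_{V'} \le \inf_{x'\in\mathcal{X}}\|x-x'\|_V = \dist_V(x,\mathcal{X}).
\end{equation}
The infimum is attained, since $\mathcal{X}$ is closed and nonempty and norms are coercive, so the minimum in \cref{def:hoffman} is well defined. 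Attainment is not actually needed for the inequality.

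\textbf{Step 2 (transfer to the constant).} By the definition of $H_{\mathcal{K}}^{V,W}(\mathcal{X})$, we have $\dist_V(x,\mathcal{X}) \le H_{\mathcal{K}}^{V,W}(\mathcal{X})\,\Res_W(x,\mathcal{X})$ for all $x\in\mathcal{K}$. Chaining this with Step 1 gives
\begin{equation}
    \dist_{V'}(x,\mathcal{X}) \le H_{\mathcal{K}}^{V,W}(\mathcal{X})\,\Res_W(x,\mathcal{X})
\end{equation}
for all $x\in\mathcal{K}$. Hence $H_{\mathcal{K}}^{V,W}(\mathcal{X})$ is an admissible constant for the $(V',W)$ inequality. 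Since $H_{\mathcal{K}}^{V',W}(\mathcal{X})$ is the smallest admissible constant, the claimed inequality follows.

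\textbf{Main obstacle.} There is essentially no analytic difficulty here. The only point needing care is the degenerate case $H_{\mathcal{K}}^{V,W}(\mathcal{X})=+\infty$, or an empty admissible set, where the claim is trivial. I will also note that the smallest constant is attained, or at least interpret it as an infimum, because the set of admissible constants is closed under limits: the inequality is non-strict. I will remark that the hypothesis is only needed on pairs in $\mathcal{K}\times\mathcal{X}$, which is exactly what Step 1 uses. It holds globally, for instance, for $V'=\ell_2$ and $V=\ell_1$.
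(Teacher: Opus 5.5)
Your proof is correct and matches the paper's argument: the paper also takes the minimum over $x' \in \mathcal{X}$ in the hypothesis to get $\dist_{V'}(x,\mathcal{X}) \le \dist_V(x,\mathcal{X})$ for $x \in \mathcal{K}$, chains this with the $(V,W)$ Hoffman inequality, and concludes from the minimality of $H_{\mathcal{K}}^{V',W}(\mathcal{X})$. Your remarks on attainment of the minimum and closedness of the admissible set are harmless extras that the paper leaves implicit.
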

\begin{proof}
    By definition of the restricted Hoffman constant $H_{\mathcal{K}}^{V, W}(\mathcal{X})$, we have for all $x \in \mathcal{K}$ that
    \begin{equation}
        \dist_V(x, \mathcal{X}) = \min_{x' \in \mathcal{X}} \|x - x'\|_V \le H_{\mathcal{K}}^{V, W}(\mathcal{X}) \Res_W(x, \mathcal{X}).
    \end{equation}
    By assumption on the domain norms, the distance in $V'$ is bounded by the distance in $V$:
    \begin{equation}
        \dist_{V'}(x, \mathcal{X}) = \min_{x' \in \mathcal{X}} \|x - x'\|_{V'} \le \min_{x' \in \mathcal{X}} \|x - x'\|_V = \dist_V(x, \mathcal{X}).
    \end{equation}
    Combining these inequalities yields
    \begin{equation}
        \dist_{V'}(x, \mathcal{X}) \le H_{\mathcal{K}}^{V, W}(\mathcal{X}) \Res_W(x, \mathcal{X}).
    \end{equation}
    Since $H_{\mathcal{K}}^{V', W}(\mathcal{X})$ is the smallest constant satisfying such an inequality, it follows that $H_{\mathcal{K}}^{V', W}(\mathcal{X}) \le H_{\mathcal{K}}^{V, W}(\mathcal{X})$.
\end{proof}

This result is particularly relevant for standard $\ell_p$ norms, where the inequality $\|\cdot\|_q \leq \|\cdot\|_p$ holds uniformly for $1 \leq p \leq q \leq \infty$, implying that moving to a higher-order norm on the domain (e.g., from $\ell_2$ to $\ell_\infty$) naturally decreases the Hoffman constant. We now prove the analogous result for the residual space.

\begin{proposition}[Monotonicity of the Hoffman constant in the residual norm]
    \label{prop:hoffman-monotonicity-residual}
    Let $\mathcal{X} \subseteq \mathbb{R}^d$ be a nonempty polyhedral solution set and $\mathcal{K} \subseteq \mathbb{R}^d$ be a restricted domain. Suppose $\|\cdot\|_W$ and $\|\cdot\|_{W'}$ are norms on the residual space such that for all $x \in \mathcal{K}$, the residual satisfies $\Res_W(x, \mathcal{X}) \le \Res_{W'}(x, \mathcal{X})$. Then for any domain norm $\|\cdot\|_V$:
    \begin{equation}
        H_{\mathcal{K}}^{V, W'}(\mathcal{X}) \le H_{\mathcal{K}}^{V, W}(\mathcal{X}).
    \end{equation}
\end{proposition}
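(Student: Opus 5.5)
The argument mirrors the proof of \cref{prop:hoffman-monotonicity-domain}, with the roles of the distance and the residual exchanged. Fix $x \in \mathcal{K}$. By the definition of the restricted Hoffman constant $H_{\mathcal{K}}^{V,W}(\mathcal{X})$ (\cref{def:restricted-hoffman}), we have
\begin{equation}
    \dist_V(x,\mathcal{X}) \le H_{\mathcal{K}}^{V,W}(\mathcal{X}) \Res_W(x,\mathcal{X}).
\end{equation}
The distance on the left is the same in both settings, because the domain norm $\|\cdot\|_V$ is unchanged. Only the residual norm on the right differs.

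Next I apply the hypothesis $\Res_W(x,\mathcal{X}) \le \Res_{W'}(x,\mathcal{X})$ for $x\in\mathcal{K}$. Since $H_{\mathcal{K}}^{V,W}(\mathcal{X}) \ge 0$, multiplying preserves the inequality, and chaining gives
\begin{equation}
    \dist_V(x,\mathcal{X}) \le H_{\mathcal{K}}^{V,W}(\mathcal{X}) \Res_{W'}(x,\mathcal{X}) \quad \text{for all } x\in\mathcal{K}.
\end{equation}
So $H_{\mathcal{K}}^{V,W}(\mathcal{X})$ is an admissible constant for the pair $(V,W')$. Because $H_{\mathcal{K}}^{V,W'}(\mathcal{X})$ is by definition the smallest admissible constant, the claimed inequality follows.

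The argument has no real obstacle; the main care is bookkeeping. First, the residual is the norm of the concatenated vector $(Gx-h,[Mx-r]_+)$, so the hypothesis compares two norms of the same residual vector, and this is exactly what is used. Second, if $H_{\mathcal{K}}^{V,W}(\mathcal{X})$ is infinite the statement is vacuous. Third, points with zero $W'$-residual also have zero $W$-residual, so they lie in $\mathcal{X}$ and cause no division issue.

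I would close with a remark paralleling the one after \cref{prop:hoffman-monotonicity-domain}. For $\ell_p$ residual norms, $\|\cdot\|_q \le \|\cdot\|_p$ when $p\le q$, so the hypothesis holds with $W=\ell_q$ and $W'=\ell_p$. Hence measuring the residual in a lower-order norm (for example $\ell_1$ instead of $\ell_2$) can only decrease the Hoffman constant.
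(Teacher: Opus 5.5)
Your proof is correct and follows essentially the same route as the paper's: apply the definition of $H_{\mathcal{K}}^{V,W}(\mathcal{X})$, substitute the residual comparison to see that this constant is admissible for the pair $(V,W')$, and conclude by minimality of $H_{\mathcal{K}}^{V,W'}(\mathcal{X})$. Your closing $\ell_p$ remark also has the right direction: $\|\cdot\|_q \le \|\cdot\|_p$ for $p \le q$ gives $H_{\mathcal{K}}^{V,\ell_p} \le H_{\mathcal{K}}^{V,\ell_q}$.
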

\begin{proof}
    By definition of the restricted Hoffman constant $H_{\mathcal{K}}^{V, W}(\mathcal{X})$, we have for all $x \in \mathcal{K}$ that
    \begin{equation}
        \dist_V(x, \mathcal{X}) \le H_{\mathcal{K}}^{V, W}(\mathcal{X}) \Res_W(x, \mathcal{X}).
    \end{equation}
    Substituting the assumption $Res_W(x, \mathcal{X}) \le \Res_{W'}(x, \mathcal{X})$ into the inequality:
    \begin{equation}
        \dist_V(x, \mathcal{X}) \le H_{\mathcal{K}}^{V, W}(\mathcal{X}) \Res_{W'}(x, \mathcal{X}).
    \end{equation}
    Since $H_{\mathcal{K}}^{V, W'}(\mathcal{X})$ is the smallest constant satisfying such an inequality, it follows that $H_{\mathcal{K}}^{V, W'}(\mathcal{X}) \le H_{\mathcal{K}}^{V, W}(\mathcal{X})$.
\end{proof}

A consequence of combining \cref{prop:hoffman-monotonicity-domain} and \cref{prop:hoffman-monotonicity-residual} is that there is no universal monotonicity property in the case when the same norm is used for both the domain and residual space (e.g., the $H_{\mathcal{K}}^{p,p}$ case). 

\section{The Geometry of Sparse Regression}
\label{sec:application-lasso}

Having established a general decomposition for structured polyhedral regression systems, we now apply this framework to the LASSO (\cref{problem:lasso}). In particular, we show how the Hoffman constant can further be decomposed into active-set curvature and active-inactive feature interaction effect. An extended derivation is provided in Appendix~\ref{app:lasso-polyhedral}.

Recall the LASSO objective function:
\begin{equation}
    F(\beta) := \frac{1}{2n} \|A\beta - y\|_2^2 + \lambda_n \|\beta\|_1.
\end{equation}
Because $F(\beta)$ is a proper, closed, convex function, its global minimizers are characterized by the first-order KKT conditions~\citep{boyd2004convex}. A vector $\beta \in \mathbb{R}^d$ is optimal if and only if $0 \in \partial F(\beta)$, which requires the existence of a subgradient vector $g \in \partial \|\beta\|_1$ such that:
\begin{equation}
    \label{eq:lasso-kkt-global}
    \frac{1}{n} A^\top (A\beta - y) + \lambda_n g = 0,
\end{equation}
where the subdifferential of the $\ell_1$ norm acts coordinate-wise: $g_j = \sign(\beta_j)$ if $\beta_j \neq 0$, and $g_j \in [-1, 1]$ if $\beta_j = 0$; these are the sign and box constraints respectively. 

\subsection{The Local KKT System for a Fixed Active Set}

To apply our geometric decomposition, we reformulate the global KKT conditions \eqref{eq:lasso-kkt-global} as an intersected polyhedral system $\Gamma_{\mathcal{A}} = \Gamma_{\text{aff}, \mathcal{A}} \cap \Gamma_{\text{con}, \mathcal{A}}$ over the active variables. Let $G = \frac{1}{n} A^\top A$ be the normalized Gram matrix. By partitioning the system into active ($\mathcal{A}$) and inactive ($\mathcal{A}^c$) blocks, we obtain a square linear system over the free variables $z := (\beta_{\mathcal{A}}^\top, g_{\mathcal{A}^c}^\top)^\top \in \mathbb{R}^d$:
\begin{equation}
    \label{eq:lasso-affine-system}
    \underbrace{\begin{pmatrix}
    G_{\mathcal{A}\mathcal{A}} & 0 \\
    G_{\mathcal{A}^c\mathcal{A}} & \lambda_n I
    \end{pmatrix}}_{:= M_{\mathcal{A}}}
    \begin{pmatrix}
    \beta_{\mathcal{A}} \\
    g_{\mathcal{A}^c}
    \end{pmatrix}
    =
    \underbrace{\begin{pmatrix}
    \frac{1}{n} A_{\mathcal{A}}^\top y - \lambda_n \sign(\beta_{\mathcal{A}}) \\
    \frac{1}{n} A_{\mathcal{A}^c}^\top y
    \end{pmatrix}}_{:= b_{\mathcal{A}}}.
\end{equation}
This block lower-triangular matrix $M_{\mathcal{A}} \in \mathbb{R}^{d \times d}$ defines the affine subspace with solution set $\Gamma_{\text{aff}, \mathcal{A}}$.

Simultaneously, the definition of the $\ell_1$ subdifferential imposes strict, separable coordinate-wise constraints on the vector $z$. The active parameters must maintain their identified signs (e.g., $z_j \ge 0$ if $\sign(\beta_j) = 1$), and the inactive subgradients must not violate the dual feasibility bounds ($z_j \in [-1, 1]$ for $j \in \mathcal{A}^c$). These independent, one-dimensional intervals form exactly the separable bounding polyhedron with solution set  $\Gamma_{\text{con}}$ defined in \cref{prop:hoffman-separable-polyhedral}.

\subsection{Assumptions: General Position and Sparsity}
\label{subsec:lasso-assumptions}

To extract explicit bounds from the decomposed system, we must evaluate the geometric stability of the affine linear system $M_{\mathcal{A}} z = b_{\mathcal{A}}$. By the definition of the Hoffman constant, this is governed by the operator norm of the matrix inverse, $\|M_{\mathcal{A}}^{-1}\|_2$. Because $\det(M_{\mathcal{A}}) = \det(G_{\mathcal{A}\mathcal{A}}) \cdot \det(\lambda_n I)$, the KKT block matrix $M_{\mathcal{A}}$ is invertible if and only if $\lambda_n > 0$ and the active Gram matrix $G_{\mathcal{A}\mathcal{A}}$ is positive definite. The latter is true if and only if $\mynull A_{\mathcal{A}} = \{0\}$ for the active set under consideration.

For convenience, we will assume a stronger condition which will make this true for all candidate active sets. To guarantee the restricted linear subsystem is tractable, we rely on a version of the statistical uniqueness argument of the LASSO estimator~\citep{tibshirani2012lassouniqueness}.

\begin{assumption}[General position]
\label{assum:general-position}
    Every collection of at most $n$ columns of $A\in\mathbb R^{n\times d}$ is linearly independent.
\end{assumption}

\begin{assumption}[Sparse active set]
\label{assum:sparse-active-set}
    The active set $\mathcal{A}$ under consideration is $n$-sparse. That is, $|\mathcal{A}| \le n$.
\end{assumption}

If the columns of $A$ are drawn jointly from a continuous probability distribution, they satisfy~\cref{assum:general-position} almost surely. Under \cref{assum:general-position,assum:sparse-active-set}, every active submatrix $A_{\mathcal A}$ considered has full column rank. Consequently, $G_{\mathcal A\mathcal A}$ is
positive definite and $M_{\mathcal A}$ is invertible.

Under the additional condition that the signed columns
$\{\pm A_1,\dots,\pm A_d\}$ are in general position in the sense of~\citet{tibshirani2012lassouniqueness}, the LASSO problem admits a unique, sparse optimal solution for every $\lambda_n > 0$. This means that the optimal active set is sufficiently sparse ($|\hat{\mathcal{A}}| \le n$) and has a submatrix $A_{\hat{\mathcal{A}}}$ with full column rank. Thus, once manifold identification has occurred, the resulting terminal active-set regime is covered by our analysis.

To reiterate the exact conditions of our geometric decomposition: the bound on $H(\Gamma_{\text{con}})$ in \cref{prop:hoffman-separable-polyhedral} applies for \textit{any} active set, given an absolute norm. Furthermore, the variational formulation $H(\Gamma_{\text{aff}})$ is universally applicable using the quotient-space characterization in \eqref{eq:affine-operator-norm}. However, the collapse of the intersection penalty (namely $\tau(\Gamma_{\text{aff}}, \Gamma_{\text{con}}) = 1$) requires the solution of the affine system to be unique. Under \cref{assum:general-position,assum:sparse-active-set}, we have access to the exact inverse of $M_{\mathcal{A}}$, allowing us to abandon the pseudoinverse, which is analytically cumbersome and obscures the explicit geometric interactions. 

In the high-dimensional regime ($d \gg n$), the Gram matrix over any nonsparse set is inherently rank-deficient. We further discuss ideas for bounds on nonsparse active sets in \cref{sec:discussion}.

\subsection{Explicit Spectral Bounds}
\label{subsec:spectral-bounds}

For the remainder of this analysis, we operate under \cref{assum:general-position,assum:sparse-active-set} unless otherwise specified.

To evaluate the Hoffman constant of this affine system, we apply standard block matrix inversion to $M_{\mathcal{A}}$. For any positive regularization penalty $\lambda_n > 0$, the exact inverse is given by~\citep{lu2002matrixinverse}:
\begin{equation}
    \label{eq:inverse-formulation}
    M_{\mathcal A}^{-1}=
    \begin{pmatrix}
    G_{\mathcal A\mathcal A}^{-1} & 0\\[3pt]
    -\frac{1}{\lambda_n} G_{\mathcal A^c\mathcal A} G_{\mathcal A\mathcal A}^{-1} & \frac{1}{\lambda_n} I
    \end{pmatrix}.
\end{equation}
The magnitude of the restricted Hoffman constant is therefore entirely dictated by the induced norm of this specific block matrix. It will be a recurring theme that controlling $G_{\mathcal A\mathcal A}^{-1}$ requires bounding the active set curvature, and controlling $G_{\mathcal A^c\mathcal A} G_{\mathcal A\mathcal A}^{-1}$ requires bounding the active-inactive feature interaction.

The Euclidean Hoffman constant $H^{2,2}$ of the active set is governed by $\|M_{\mathcal{A}}^{-1}\|_2$, for which we now provide a range of deterministic upper bounds. These bounds rely on techniques from matrix perturbation theory~\citep{stewart1990matrixperturbation} and exhibit varying degrees of tightness depending on the geometric regime (e.g., high versus low active-inactive feature interaction). Because no single bound is optimal across all statistical designs, the practical geometric penalty can be minimized by taking the tightest available upper bound: $\|M_{\mathcal{A}}^{-1}\|_2 \le \min_i U_i(\mathcal{A})$ for some collection $\{U_i\}$ of upper bounds.

\subsubsection{Frobenius-Type Bound}

The most immediate bound relies on a Frobenius-type argument. Because the spectral norm is bounded by the Frobenius norm ($\|\cdot\|_2 \le \|\cdot\|_F$ uniformly), and the squared Frobenius norm of a block matrix is the sum of the squared Frobenius norms of its constituent blocks, we can bound the inverse matrix \eqref{eq:inverse-formulation} directly:
\begin{equation}
    \|M_{\mathcal A}^{-1}\|_2 \le \|M_{\mathcal A}^{-1}\|_F = \sqrt{ \|G_{\mathcal A\mathcal A}^{-1}\|_F^2 + \frac{1}{\lambda_n^2} \| G_{\mathcal A^c\mathcal A} G_{\mathcal A\mathcal A}^{-1}\|_F^2 + \frac{1}{\lambda_n^2} \|I\|_F^2 }.
\end{equation}
Since the identity matrix $I$ in the lower-right block corresponds to the inactive constraints, its dimension is $(d - s) \times (d - s)$, where $s = |\mathcal{A}|$ is the active sparsity. Its squared Frobenius norm is exactly $d - s$.
\begin{bound}[Frobenius-type spectral bound]
    The restricted $\ell_2$ Hoffman constant is bounded by:
    \begin{equation}
    H^{2, 2}_{\mathcal{S}_{\mathcal{A}}}(\Gamma_{\text{aff}, \mathcal{A}}) \le \sqrt{ \|G_{\mathcal A\mathcal A}^{-1}\|_F^2 + \frac{1}{\lambda_n^2} \|G_{\mathcal A^c\mathcal A} G_{\mathcal A\mathcal A}^{-1}\|_F^2 + \frac{d - s}{\lambda_n^2} }.
    \end{equation}
\end{bound}
While algebraically simple, this bound scales explicitly and implicitly with the ambient dimension $d$, which fails to capture the dimension-independent convergence rates often observed in sparse, high-dimensional practice.

\subsubsection{Dimension-Free Spectral Bound}

To obtain a bound that does not explicitly scale with the ambient dimension, we use the variational characterization of the spectral norm and partition the domain. This provides our main ``universal'' deterministic bound.

\begin{bound}[Dimension-free spectral upper bound]
    \label{bound:dimension-free-bound}
    The restricted $\ell_2$ Hoffman constant is bounded above by:
    \begin{equation}
    H^{2, 2}_{\mathcal{S}_{\mathcal{A}}}(\Gamma_{\text{aff}, \mathcal{A}}) \le \sqrt{ \|G_{\mathcal A\mathcal A}^{-1}\|_2^2 + \frac{1}{\lambda_n^2} \left( 1 + \|G_{\mathcal A^c\mathcal A} G_{\mathcal A\mathcal A}^{-1}\|_2^2 \right) }.
    \end{equation}
\end{bound}
\begin{proof}
    By definition, the spectral norm is given by the supremum over the unit ball
    \begin{equation}
        \|M_{\mathcal A}^{-1}\|_2 = \sup_{\|z\|_2=1} \|M_{\mathcal A}^{-1} z\|_2.
    \end{equation} 
    Let $z = (x^\top, y^\top)^\top$ be a partitioned unit vector such that $\|x\|_2^2 + \|y\|_2^2 = 1$. Multiplying by the explicit block inverse \eqref{eq:inverse-formulation}, we obtain:
    \begin{equation}
    M_{\mathcal{A}}^{-1} z = \begin{pmatrix} G_{\mathcal{A}\mathcal{A}}^{-1} x \\ -\frac{1}{\lambda_n} G_{\mathcal{A}^c\mathcal{A}} G_{\mathcal{A}\mathcal{A}}^{-1} x + \frac{1}{\lambda_n} y \end{pmatrix}.
    \end{equation}
    The squared Euclidean norm of this transformed vector is:
    \begin{equation}
    \|M_{\mathcal{A}}^{-1} z\|_2^2 = \|G_{\mathcal{A}\mathcal{A}}^{-1} x\|_2^2 + \left\| -\frac{1}{\lambda_n} G_{\mathcal{A}^c\mathcal{A}} G_{\mathcal{A}\mathcal{A}}^{-1} x + \frac{1}{\lambda_n} y \right\|_2^2.
    \end{equation}
    Applying the submultiplicativity of the operator norm to the first term, and the triangle inequality followed by the Cauchy--Schwarz inequality to the second term, we bound the expression as:
    \begin{equation}
    \|M_{\mathcal{A}}^{-1} z\|_2^2 \le \|G_{\mathcal{A}\mathcal{A}}^{-1}\|_2^2 \|x\|_2^2 + \frac{1}{\lambda_n^2} \left( \|G_{\mathcal{A}^c\mathcal{A}} G_{\mathcal{A}\mathcal{A}}^{-1}\|_2^2 + 1 \right) \left( \|x\|_2^2 + \|y\|_2^2 \right).
    \end{equation}
    Since our initial vector was chosen such that $\|x\|_2^2 + \|y\|_2^2 = 1$ (and consequently $\|x\|_2^2 \le 1$), this simplifies to:
    \begin{equation}
    \|M_{\mathcal{A}}^{-1} z\|_2^2 \le \|G_{\mathcal{A}\mathcal{A}}^{-1}\|_2^2 + \frac{1}{\lambda_n^2} \left( \|G_{\mathcal{A}^c\mathcal{A}} G_{\mathcal{A}\mathcal{A}}^{-1}\|_2^2 + 1 \right).
    \end{equation}
    Taking the square root of both sides yields the stated bound.
\end{proof}
This formulation further isolates the two geometric mechanisms that dictate the conditioning of the active system: the active feature curvature $\|G_{\mathcal{A} \mathcal{A}}^{-1}\|_2$, and the interaction between the active and inactive features $\|G_{\mathcal{A}^c \mathcal{A}} G_{\mathcal{A} \mathcal{A}}^{-1}\|_2$.

An analogous method can be used to derive a lower bound:
\begin{bound}[Dimension-free spectral lower bound]
    \label{bound:dimension-free-lower-bound}
    The restricted $\ell_2$ Hoffman constant is bounded below by:
    \begin{equation}
        H^{2, 2}_{\mathcal{S}_{\mathcal{A}}}(\Gamma_{\text{aff}, \mathcal{A}}) \ge \max \left( \|G_{\mathcal{A}\mathcal{A}}^{-1}\|_2, \; \frac{1}{\lambda_n}, \; \frac{1}{\lambda_n} \|G_{\mathcal{A}^c\mathcal{A}} G_{\mathcal{A}\mathcal{A}}^{-1}\|_2 \right).
    \end{equation}
\end{bound}
\begin{proof}
    Restricting the supremum in the spectral norm definition $\|M_{\mathcal{A}}^{-1}\|_2 = \sup_{\|z\|_2=1} \|M_{\mathcal{A}}^{-1} z\|_2$ to unit vectors of the form $z = (x^\top, 0^\top)^\top$ yields:
    \begin{equation}
        \|M_{\mathcal{A}}^{-1}\|_2^2 \ge \|G_{\mathcal{A}\mathcal{A}}^{-1} x\|_2^2 + \frac{1}{\lambda_n^2} \|G_{\mathcal{A}^c\mathcal{A}} G_{\mathcal{A}\mathcal{A}}^{-1} x\|_2^2.
    \end{equation}
    Taking the supremum over $x$ independently for each nonnegative term yields lower bounds of $\|G_{\mathcal{A}\mathcal{A}}^{-1}\|_2$ and $\frac{1}{\lambda_n} \|G_{\mathcal{A}^c\mathcal{A}} G_{\mathcal{A}\mathcal{A}}^{-1}\|_2$. Alternatively, restricting to $z = (0^\top, y^\top)^\top$ yields the remaining bound $\frac{1}{\lambda_n}$. 
\end{proof}

In order to see the tightness of these bounds, observe that $a^2 + b^2 + c^2 \le 3 \max(a, b, c)^2$ for any positive scalars, and thus the upper and lower bounds sandwich the exact restricted $\ell_2$ Hoffman constant within a factor of $\sqrt{3}$:
\begin{equation}
     H^{2, 2}_{\mathcal{S}_{\mathcal{A}}}(\Gamma_{\text{aff}, \mathcal{A}}) \le \sqrt{\|G_{\mathcal{A}\mathcal{A}}^{-1}\|_2^2 + \frac{1}{\lambda_n^2} + \frac{1}{\lambda_n^2} \|G_{\mathcal{A}^c\mathcal{A}} G_{\mathcal{A}\mathcal{A}}^{-1}\|_2^2} \le \sqrt{3} \max \left( \|G_{\mathcal{A}\mathcal{A}}^{-1}\|_2, \frac{1}{\lambda_n}, \frac{1}{\lambda_n} \|G_{\mathcal{A}^c\mathcal{A}} G_{\mathcal{A}\mathcal{A}}^{-1}\|_2 \right).
\end{equation}
Consequently, this technique is tight in the sense that the constant only grows large if the active curvature $\|G_{\mathcal{A}\mathcal{A}}^{-1}\|_2$ explodes, or if the interaction leakage $\|G_{\mathcal{A}^c\mathcal{A}} G_{\mathcal{A}\mathcal{A}}^{-1}\|_2$ overwhelms the regularization $\lambda_n$:
\begin{equation}
    \label{eq:l2-hoffman-tight-lower-upper-bound}
     \max \left( \|G_{\mathcal{A}\mathcal{A}}^{-1}\|_2, \frac{1}{\lambda_n}, \frac{1}{\lambda_n} \|G_{\mathcal{A}^c\mathcal{A}} G_{\mathcal{A}\mathcal{A}}^{-1}\|_2 \right) \le H^{2, 2}_{\mathcal{S}_{\mathcal{A}}}(\Gamma_{\text{aff}, \mathcal{A}}) \le \sqrt{3} \max \left( \|G_{\mathcal{A}\mathcal{A}}^{-1}\|_2, \frac{1}{\lambda_n}, \frac{1}{\lambda_n} \|G_{\mathcal{A}^c\mathcal{A}} G_{\mathcal{A}\mathcal{A}}^{-1}\|_2 \right).
\end{equation}
This tightness is demonstrated numerically for a range of designs and high-dimensional regimes in Appendix~\ref{app:empirical-tightness-scaling}.

\subsubsection{Weyl's Perturbation Inequality}

Finally, if the interaction term is sufficiently small, we can treat the active linear system as a perturbation to a well-conditioned diagonal matrix. We decompose $M_{\mathcal{A}}$ into a symmetric block-diagonal matrix $D$ (due to the symmetry of Gram matrices) and an off-diagonal perturbation $E$:
\begin{equation}
    M_{\mathcal{A}}=\underbrace{\begin{pmatrix}G_{\mathcal{A}\mathcal{A}}&0\\0&\lambda_n I\end{pmatrix}}_{D}+\underbrace{\begin{pmatrix}0&0\\G_{\mathcal{A}^c\mathcal{A}}&0\end{pmatrix}}_{E}.
\end{equation}
By Weyl's inequality for singular values, $\sigma_{\min}(M_{\mathcal{A}}) \ge \sigma_{\min}(D) - \|E\|_2$. Since $\sigma_{\min}(D) = \min(\sigma_{\min}(G_{\mathcal{A}\mathcal{A}}), \lambda_n)$ and $\|E\|_2 = \|G_{\mathcal{A}^c\mathcal{A}}\|_2$, we obtain the following bound.

\begin{bound}[Weyl's perturbation inequality]
    \label{bound:weyl-perturbation}
    Assume the statistical interaction is bounded by the active curvature and regularization, $\|G_{\mathcal{A}^c\mathcal{A}}\|_2 < \min(\sigma_{\min}(G_{\mathcal{A}\mathcal{A}}), \lambda_n)$. Then, the restricted $\ell_2$ Hoffman constant is bounded by:
    \begin{equation}
    \label{eq:weyl-hoffman-bound}
    H^{2, 2}_{\mathcal{S}_{\mathcal{A}}}(\Gamma_{\text{aff}, \mathcal{A}}) \le \frac{1}{ \min(\sigma_{\min}(G_{\mathcal{A}\mathcal{A}}), \lambda_n) - \|G_{\mathcal{A}^c\mathcal{A}}\|_2}.
    \end{equation}
    Furthermore, this bound trivially holds with equality when $G_{\mathcal{A}^c\mathcal{A}} = 0$.
\end{bound}
While this bound is invalid if the interaction perturbation is large, it becomes tight in regimes where statistical conditions (e.g., incoherence conditions) naturally suppress $\|G_{\mathcal{A}^c\mathcal{A}}\|_2$.

\subsection{Bounding the Active-Inactive Interaction}

Across the spectral bounds, a primary factor of geometric instability is the interaction block $\|G_{\mathcal{A}^c\mathcal{A}}\|_2$. If the features were known a priori, there would be no inactive set to consider, and the optimization geometry would depend entirely on the active curvature. However, since the true support is unknown and must be discovered, the optimization landscape is complicated by this extra interaction term.

We can construct a post-hoc diagnostic to determine which variables caused poor conditioning of the spectral norm. The following deterministic bound is constructed to play a similar function to the leverage score in the existing regression literature~\citep{hoaglin1978leverage}; we call this the active-inactive cross-leverage.

\begin{lemma}[Active-inactive cross-leverage bound]
    \label{lem:interaction-bound}
    Let $\Pi_{\mathcal{A}}=A_{\mathcal{A}}(A_{\mathcal{A}}^\top A_{\mathcal{A}})^{-1}A_{\mathcal{A}}^\top$ be the orthogonal projection matrix onto the subspace spanned by the active features, and let $a_j \in \mathbb{R}^n$ denote the $j$-th column of the design matrix $A$. For each inactive feature $j\in\mathcal{A}^c$, we define the active-inactive cross-leverage score as $h_j(\mathcal{A})=\frac{1}{n}\|\Pi_{\mathcal{A}}a_j\|_2^2$. The spectral norm of the interaction block satisfies:
    \begin{equation}
    \|G_{\mathcal{A}^c\mathcal{A}}\|_2 \le \frac{1}{\sqrt{n}} \sigma_{\max}(A_\mathcal{A}) \sqrt{\sum_{j\in\mathcal{A}^c}h_j(\mathcal{A})}. 
    \end{equation}
\end{lemma}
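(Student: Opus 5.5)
The plan is to factor the interaction block through the projection onto the active column space. Write $G_{\mathcal{A}^c\mathcal{A}} = \frac{1}{n} A_{\mathcal{A}^c}^\top A_{\mathcal{A}}$. The key observation is that $\Pi_{\mathcal{A}} A_{\mathcal{A}} = A_{\mathcal{A}}$: under \cref{assum:general-position,assum:sparse-active-set}, $A_{\mathcal{A}}^\top A_{\mathcal{A}}$ is invertible, so $\Pi_{\mathcal{A}}$ is well defined and fixes the range of $A_{\mathcal{A}}$. Since $\Pi_{\mathcal{A}}$ is symmetric, this gives the factorization
\begin{equation}
    G_{\mathcal{A}^c\mathcal{A}} = \frac{1}{n} A_{\mathcal{A}^c}^\top \Pi_{\mathcal{A}} A_{\mathcal{A}} = \frac{1}{n} \left(\Pi_{\mathcal{A}} A_{\mathcal{A}^c}\right)^\top A_{\mathcal{A}}.
\end{equation}

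Next I apply submultiplicativity of the spectral norm:
\begin{equation}
    \|G_{\mathcal{A}^c\mathcal{A}}\|_2 \le \frac{1}{n}\, \|\Pi_{\mathcal{A}} A_{\mathcal{A}^c}\|_2\, \|A_{\mathcal{A}}\|_2 = \frac{1}{n}\, \sigma_{\max}(A_{\mathcal{A}})\, \|\Pi_{\mathcal{A}} A_{\mathcal{A}^c}\|_2.
\end{equation}
I then bound the spectral norm of $\Pi_{\mathcal{A}} A_{\mathcal{A}^c}$ by its Frobenius norm. That Frobenius norm is a sum over the columns $\Pi_{\mathcal{A}} a_j$ for $j \in \mathcal{A}^c$:
\begin{equation}
    \|\Pi_{\mathcal{A}} A_{\mathcal{A}^c}\|_2^2 \le \sum_{j\in\mathcal{A}^c} \|\Pi_{\mathcal{A}} a_j\|_2^2 = n \sum_{j\in\mathcal{A}^c} h_j(\mathcal{A}).
\end{equation}
Taking square roots and substituting gives $\frac{1}{n}\sigma_{\max}(A_{\mathcal{A}})\sqrt{n}\sqrt{\sum_j h_j(\mathcal{A})}$. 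This simplifies to the claimed $\frac{1}{\sqrt{n}}\sigma_{\max}(A_{\mathcal{A}})\sqrt{\sum_{j\in\mathcal{A}^c} h_j(\mathcal{A})}$.

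There is no real obstacle; the only idea needed is inserting $\Pi_{\mathcal{A}}$, which is what makes the leverage scores appear. The remaining care is bookkeeping of the $1/n$ normalization so that the single $\sqrt{n}$ cancels correctly.
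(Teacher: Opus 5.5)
Your proof is correct and uses the same two ingredients as the paper: inserting the projection onto the active column space, and a Frobenius bound. The difference is the order in which you apply them.

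The paper first applies $\|\cdot\|_2 \le \|\cdot\|_F$ to $G_{\mathcal{A}^c\mathcal{A}}$ itself. It writes each squared row norm as $\frac{1}{n^2}a_j^\top A_{\mathcal{A}}A_{\mathcal{A}}^\top a_j$ and controls it with the Loewner bound $A_{\mathcal{A}}A_{\mathcal{A}}^\top \preceq \sigma_{\max}^2(A_{\mathcal{A}})\Pi_{\mathcal{A}}$, obtained from the SVD. This gives the per-row estimate $\|(G_{\mathcal{A}^c\mathcal{A}})_{j,:}\|_2^2 \le \frac{1}{n}\sigma_{\max}^2(A_{\mathcal{A}})h_j(\mathcal{A})$. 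That estimate ties each inactive feature's row of the interaction block to its own score, which is what the later pruning discussion relies on.

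You instead use the exact identity $\Pi_{\mathcal{A}}A_{\mathcal{A}} = A_{\mathcal{A}}$ and pull out $\sigma_{\max}(A_{\mathcal{A}})$ by submultiplicativity. You pass to the Frobenius norm only for the projected inactive block $\Pi_{\mathcal{A}}A_{\mathcal{A}^c}$. This avoids the SVD and the Loewner ordering entirely.

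Your route also goes through the intermediate estimate $\|G_{\mathcal{A}^c\mathcal{A}}\|_2 \le \frac{1}{n}\sigma_{\max}(A_{\mathcal{A}})\|\Pi_{\mathcal{A}}A_{\mathcal{A}^c}\|_2$. This is never worse than the stated bound. Because $\Pi_{\mathcal{A}}A_{\mathcal{A}^c}$ has rank at most $s$, it can be smaller than the stated bound by up to a factor of $\sqrt{s}$, so it is worth recording in its own right.
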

\begin{proof}
    The spectral norm is bounded by the Frobenius norm:
    \begin{equation}
        \|G_{\mathcal{A}^c\mathcal{A}}\|_2^2 \le \|G_{\mathcal{A}^c\mathcal{A}}\|_F^2 = \sum_{j \in \mathcal{A}^c} \|(G_{\mathcal{A}^c\mathcal{A}})_{j,:}\|_2^2.
    \end{equation}
    The $j$-th row of the interaction block is given by $(G_{\mathcal{A}^c\mathcal{A}})_{j,:} = \frac{1}{n} a_j^\top A_{\mathcal{A}}$. Its squared Euclidean norm expands to:
    \begin{equation}
        \|(G_{\mathcal{A}^c\mathcal{A}})_{j,:}\|_2^2 = \frac{1}{n^2} a_j^\top A_{\mathcal{A}} A_{\mathcal{A}}^\top a_j.
    \end{equation}
    Using the singular value decomposition $A_{\mathcal{A}} = U \Sigma V^\top$, we can express the matrix product as $A_{\mathcal{A}} A_{\mathcal{A}}^\top = U \Sigma^2 U^\top$. Since the squared singular values are bounded by the maximal singular value, $\Sigma^2 \preceq \sigma_{\max}^2(A_{\mathcal{A}}) I$, we can bound the matrix product geometrically:
    \begin{equation}
        A_{\mathcal{A}} A_{\mathcal{A}}^\top = U \Sigma^2 U^\top \preceq \sigma_{\max}^2(A_{\mathcal{A}}) U U^\top = \sigma_{\max}^2(A_{\mathcal{A}}) \Pi_{\mathcal{A}},
    \end{equation}
    where $\Pi_{\mathcal{A}} = U U^\top$ is the orthogonal projection matrix onto the column space of $A_{\mathcal{A}}$. Substituting this upper bound back into the quadratic form yields:
    \begin{equation}
        \|(G_{\mathcal{A}^c\mathcal{A}})_{j,:}\|_2^2 \le \frac{1}{n^2} \sigma_{\max}^2(A_{\mathcal{A}}) a_j^\top \Pi_{\mathcal{A}} a_j = \frac{1}{n} \sigma_{\max}^2(A_{\mathcal{A}}) \left( \frac{1}{n} a_j^\top \Pi_{\mathcal{A}} a_j \right) = \frac{1}{n} \sigma_{\max}^2(A_{\mathcal{A}}) h_j(\mathcal{A}).
    \end{equation}
    Summing this inequality over all inactive features $j \in \mathcal{A}^c$ gives the bound on the squared Frobenius norm. Taking the square root of both sides completes the proof.
\end{proof}

The cross-leverage score $h_j(\mathcal{A})$ measures alignment between the $j$-th inactive feature and the space of active features: if $h_j \to \frac{1}{n}\|a_j\|_2^2$, the inactive feature lies in the span of the active columns. The term $\frac{1}{\sqrt{n}}\sigma_{\max}(A_{\mathcal{A}}) = \sqrt{\lambda_{\max}(G_{\mathcal{A}\mathcal{A}})}$ acts as a gain factor. From~\citet{rudelson2010singularvalues}, the maximal singular value of the active submatrix scales purely with the sample size and sparsity as $\sigma_{\max}(A_{\mathcal{A}}) \sim \sqrt{n} + \sqrt{s} = \mathcal{O}(\sqrt{n})$. This means the normalized gain, $\frac{1}{\sqrt{n}}\sigma_{\max}(A_{\mathcal{A}})$, is naturally of order $\mathcal{O}(1)$. Thus, the active subspace does not inherently degrade the bound.

However, in highly correlated designs, even small active-inactive interaction is severely amplified. In tandem, the sum $\sum h_j$ explicitly captures the curse of dimensionality; because it sums over $d-s$ inactive features, it accumulates into a large $\mathcal{O}(\sqrt{d})$ spectral norm even when individual alignments are bounded.

\subsubsection{Active-Inactive Cross Leverage for Feature Pruning}

After fitting a model, we may use this bound to compute active-inactive cross-leverage scores for the unselected features. If a feature has a small score, it means it is mostly orthogonal to the active subspace and did not harm the conditioning of the regression with LASSO. Conversely, a large score indicates that an inactive variable strongly corresponds to the selected active set. Intuitively, this captures the geometric difficulty of feature selection: the regression is harmed (and optimization slowed) because the algorithm could have reasonably chosen similar, highly correlated features in place of the ones it actually selected. 

We demonstrate the practical utility of this diagnostic by performing a post-hoc pruning of inactive features based on their cross-leverage scores $h_j(\mathcal{A})$. This approach differs from existing a priori or in-path feature reduction techniques, such as safe screening~\citep{ghaoui2011safescreening} or sure screening~\citep{fan2008surescreening, cho2012tiltingfeaturescreening}, which are executed before or during the initial fit. The former certifies that discarded variables have zero coefficients in the LASSO solution, whereas sure screening aims to retain the true active set with probability tending to one under suitable assumptions. In particular,~\citet{cho2012tiltingfeaturescreening} evaluates the sample correlation between a candidate variable and all other variables to project away the influence of highly correlated features dynamically during the fitting process.

On the other hand, our pruning is performed \textit{a posteriori} only. We discard inactive features in $\hat{\mathcal{A}}^c$ with high cross-leverage scores to reduce the active-inactive interaction contribution to the affine Hoffman bound. For fixed data, preprocessing, and regularization, removing exactly inactive features preserves the original optimum; under uniqueness, the fitted coefficients and support remain unchanged. In this sense, our method is conceptually closest to a post-hoc version of the tilting procedure of~\citet{cho2012tiltingfeaturescreening}. 

We first examine the distribution of these scores on a synthetic high-dimensional design ($n=100, d=1000$) where the data generating process has a true sparsity of 15. As shown in \cref{fig:leverage-distribution}, the majority of inactive features possess low leverage in this Gaussian case. As the regularization $\lambda_n$ increases and the active set $s$ shrinks, the distribution of cross-leverage scores generally shifts toward zero, indicating that a more parsimonious model is naturally less susceptible to dual interaction leakage. Indeed, since the cross-leverage score is a way of bounding the active-inactive interaction from above, features with a low cross-leverage score contribute little individually to the interaction upper bound---although their aggregate contribution may remain substantial.

\begin{figure}[htbp]
    \centering
    \includegraphics[width=0.7\textwidth]{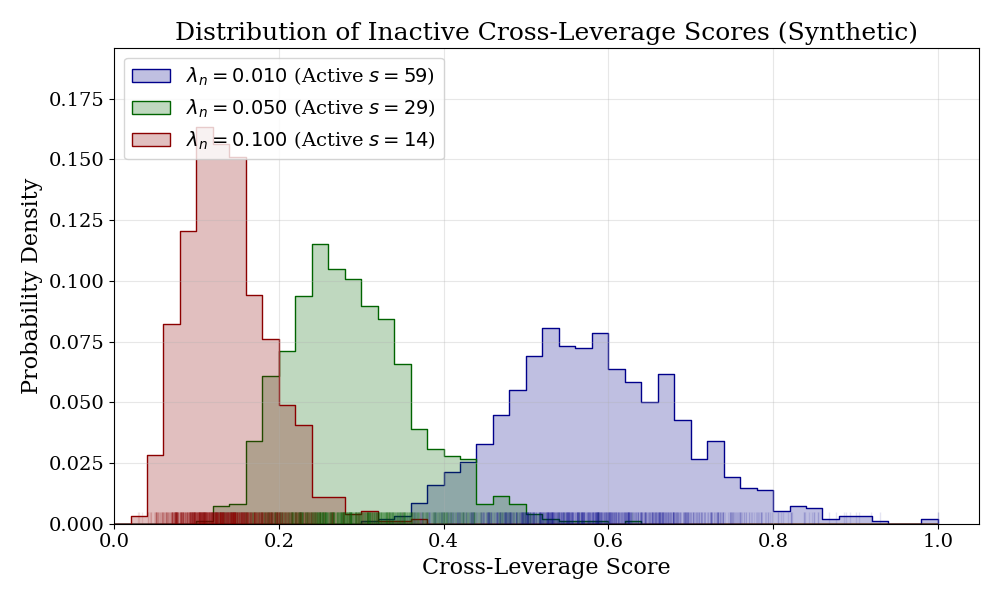}
    \caption{\textbf{Distribution of inactive cross-leverage scores for a Gaussian model.} Probability density of cross-leverage scores for unselected features across varying regularization strengths $\lambda_n$. Higher regularization yields smaller active sets, which in turn usually reduces the geometric alignment of the remaining ambient features, shifting the mass of the distribution toward the origin.}
    \label{fig:leverage-distribution}
\end{figure}

To illustrate the effect of pruning on affine conditioning and solver iterations in a real-world setting, we perform a pruning experiment on the ARCENE dataset ($d=10000, n=200$), a mass-spectrometry dataset~\citep{guyon2004neuripschallenge}. After a baseline LASSO fit, we discard inactive features exceeding a leverage threshold $h_t$ and re-solve the system. As summarized in \cref{tab:pruning-results}, discarding features with the a high cross-leverage scores ($h_t=0.95$) results in a reduction in the Hoffman constant. For $\lambda=0.1$, pruning 97\% of the ambient dimensions based on leverage allows us to reduce $H^{2,2}$ by more than half. 

In the high-regularization regime ($\lambda=1.0$), we achieve an 86\% reduction in solver iterations and drop 99\% of the ambient features without sacrificing the integrity of the originally recovered support. As a result of the smaller Hoffman constant, the stability under perturbation of the pruned solution is theoretically better, as in \eqref{eq:stability-under-perturbation}. More numerical results of this type can be found in~\citet[Section 4]{cho2012tiltingfeaturescreening}

\begin{table}[ht]
\centering
\caption{\textbf{Feature pruning on the ARCENE dataset ($d=10000, n=200$).} We compare the baseline LASSO solution to pruned models using a cross-leverage threshold $h_t=0.95$. Jaccard similarity measures support similarity with the baseline. $H^{2,2}$ is the exact affine Hoffman constant. The number of iterations of an ISTA procedure and relative reduction in iterations required to convergence compared to the baseline are reported.}
\label{tab:pruning-results}
\scriptsize
\begin{tabular}{cccccccc}
\toprule
\textbf{Regularization} ($\lambda_n$) & \textbf{Threshold} ($h_t$) & \textbf{Dropped} & \textbf{Sparsity} ($s$) & \textbf{$H^{2,2}$} & \textbf{Jaccard Sim.} & \textbf{Iters} & \textbf{Red. \%} \\ 
\midrule
0.1 & Baseline & 0 & 177 & 349.56 & 1.00 & 2178 & -- \\
    & 0.95 & 9784 & 174 & 157.24 & 0.98 & 1277 & 41.3\% \\
\midrule
1.0 & Baseline & 0 & 27 & 1.74 & 1.00 & 463 & -- \\
    & 0.95 & 9934 & 27 & 1.64 & 1.00 & 63 & 86.4\% \\
\bottomrule
\end{tabular}
\end{table}

\subsection{Examples of Structured Deterministic Designs}
\label{subsec:canonical-designs}

Although most high-dimensional statistical guarantees for the LASSO are stated for random designs, evaluating our explicit spectral bounds on a small set of deterministic, highly structured Gram matrices provides valuable insight. In general, one hopes to find designs such that the entirety of $G$ or an arbitrary active block $G_{\mathcal{A} \mathcal{A}}$ is structured. By analyzing these canonical designs, we can compute the restricted Hoffman constant in closed form to gain geometric intuition.

\subsubsection{Orthogonal Designs}

Suppose the design matrix $A_{\mathcal{A}}$ is orthogonal, yielding a fully diagonal Gram matrix $G_{\mathcal{A} \mathcal{A}} = \diag(\gamma_1, \dots, \gamma_d)$ with all $\gamma_j > 0$. Further suppose that $A_{\mathcal A^c}^{\top}A_{\mathcal A}=0$ so that the off-diagonal interaction block $G_{\mathcal{A}^c \mathcal{A}}$ vanishes entirely, and the restricted KKT matrix simplifies to:
\begin{equation}
    M_{\mathcal{A}} = \begin{pmatrix} \diag(\gamma_{\mathcal{A}}) & 0\\[4pt] 0 & \lambda_n I_{|\mathcal{A}^c|} \end{pmatrix}.
\end{equation}
This is a diagonal matrix with singular values $\gamma_j$ (for $j \in \mathcal{A}$) and $\lambda_n$. Consequently, applying \cref{bound:weyl-perturbation} yields the exact restricted Hoffman constant: 
\begin{equation}
    H^{2,2}_{\mathcal{S}_{\mathcal{A}}}(\Gamma_{\text{aff}, \mathcal{A}}) = \frac{1}{\min(\min_{j \in \mathcal{A}} \gamma_j, \lambda_n)}.
\end{equation}
This expressly demonstrates the empirical heuristic that if a selected feature possesses a small variance (i.e., $\gamma_j \approx 0$), the geometric penalty blows up, implying poor conditioning and slow convergence. Because $\gamma_j = \frac{1}{n} A^{\top}_j A_j = \frac{1}{n} \|A_j\|_2^2$ represents the sample variance of the $j$-th feature, this geometric perspective strongly motivates the use of variance filters and column normalization as a preprocessing step.

\subsubsection{Block-Diagonal Design}
In many scientific domains, high-dimensional data exhibits a modular or group-based correlation structure: in genomics, gene expression levels often form independent correlation networks corresponding to differing biological pathways; in spectroscopy, different functional groups resonate at separated frequency bands.

Consider a scenario where the features can be partitioned into $K$ disjoint groups, $\mathcal{I}_1,\dots,\mathcal{I}_K$, that are mutually orthogonal, even if intragroup features are highly correlated. Let $A=[A^{(1)},\dots,A^{(K)}]$, where $A^{(k)}$ contains the features for the $k$-th group. By orthogonality, the Gram matrix is block-diagonal: $G=\frac{1}{n}A^\top A=\operatorname{diag}(G^{(1)},\dots,G^{(K)})$. 

For any global active set $\mathcal{A}$, the local active set for the $k$-th group is defined as $\mathcal{A}_k = \mathcal{A} \cap \mathcal{I}_k$. By grouping indices corresponding to each block, the KKT matrix $M_{\mathcal{A}}$ can be permuted into a block-diagonal form:
\begin{equation}
    P M_{\mathcal{A}} P^\top = \operatorname{diag}\left(M_{\mathcal{A}_1}^{(1)}, M_{\mathcal{A}_2}^{(2)}, \dots, M_{\mathcal{A}_K}^{(K)}\right),
\end{equation}
where $P$ is a permutation matrix and each block $M_{\mathcal{A}_k}^{(k)}$ is exactly the KKT system for the subproblem on group $k$:
\begin{equation}
    M_{\mathcal{A}_k}^{(k)} = \begin{pmatrix} G^{(k)}_{\mathcal{A}_k\mathcal{A}_k} & 0 \\ G^{(k)}_{\mathcal{A}_k^c\mathcal{A}_k} & \lambda_n I \end{pmatrix}.   
\end{equation}
Because all off-diagonal interaction terms between groups are zero, the inverse of the matrix is the block-diagonal of its inverses. Since the spectral norm of a block-diagonal matrix is the maximum of the spectral norms of its blocks, the overall Hoffman constant is determined entirely by the worst-conditioned active group:
\begin{equation}
    \|M_{\mathcal{A}}^{-1}\|_2 = \left\| \operatorname{diag}\left((M_{\mathcal{A}_1}^{(1)})^{-1}, \dots, (M_{\mathcal{A}_K}^{(K)})^{-1}\right) \right\|_2 = \max_{k \in \{1, \dots, K\}} \|(M_{\mathcal{A}_k}^{(k)})^{-1}\|_2 = \max_{k \in \{1, \dots, K\}} H_{\mathcal{S}_{\mathcal{A}_k}}^{2,2}(\Gamma_{\text{aff}, \mathcal{A}_k}).
\end{equation}

\subsubsection{Population Geometry of the Spiked Model}
\label{subsubsec:spiked-model}

Our final example demonstrates why, at the population level, statistical curvature conditions do not by themselves control the active-inactive geometry of the KKT system and, more generally, the importance of explicit rather than existential convergence rate results. Consider the \textit{equicorrelated spiked covariance} model which has population covariance
\begin{equation}
    \label{eq:spiked-covariance}
    \Sigma
    :=
    (1-\rho)I_d+\rho\mathbf{1}_d\mathbf{1}_d^\top,
    \qquad
    \rho\in(0,1),
\end{equation}
as studied by~\citet[Equation 7.17]{wainwright2019highdimensional}. Equivalently, the rows $a_i^\top$ of the design matrix may be sampled
independently according to
\begin{equation}
    a_i\sim\mathcal N(0,\Sigma).
\end{equation}
The empirical Gram matrix is
\begin{equation}
    \widehat G:=\frac{1}{n}A^\top A,
\end{equation}
whereas $\Sigma=\mathbb E[\widehat G]$ is the population Gram matrix. In particular, when $d>n$, the empirical Gram matrix is singular and cannot equal the positive-definite matrix $\Sigma$.

For an active set $\mathcal A$ of cardinality
$s:=|\mathcal A|\ge 2$, the population active and interaction blocks are
\begin{equation}
    \Sigma_{\mathcal A\mathcal A}
    =
    (1-\rho)I_s+\rho\mathbf{1}_s\mathbf{1}_s^\top,
    \qquad
    \Sigma_{\mathcal A^c\mathcal A}
    =
    \rho\mathbf{1}_{d-s}\mathbf{1}_s^\top.
\end{equation}
The eigenvalues of $\Sigma_{\mathcal A\mathcal A}$ are $1+(s-1)\rho$ (with multiplicity one) corresponding to the eigenvector $\mathbf{1}_s$, and $1 - \rho$ (with multiplicity $s-1$). Consequently, the active curvature is given by the inverse of the minimum eigenvalue:
\begin{equation}
    \label{eq:spiked-active-inverse}
    \|\Sigma_{\mathcal A\mathcal A}^{-1}\|_2 = \frac{1}{1-\rho}.
\end{equation}

To compute the interaction term $\|\Sigma_{\mathcal \Sigma^c\mathcal A} \Sigma_{\mathcal A\mathcal A}^{-1}\|_2$, we note that $\mathbf{1}_s$ is an eigenvector of $\Sigma_{\mathcal{A}\mathcal{A}}^{-1}$ with eigenvalue $[1+(s-1)\rho]^{-1}$. Thus, the matrix product simplifies to:
\begin{equation}
    \Sigma_{\mathcal{A}^c \mathcal{A}} \Sigma_{\mathcal{A}\mathcal{A}}^{-1} = \left( \rho \mathbf{1}_{d-s} \mathbf{1}_s^\top \right) \Sigma_{\mathcal{A}\mathcal{A}}^{-1} = \rho \mathbf{1}_{d-s} \left( \Sigma_{\mathcal{A}\mathcal{A}}^{-1} \mathbf{1}_s \right)^\top = \frac{\rho}{1 + (s-1)\rho} \mathbf{1}_{d-s} \mathbf{1}_s^\top.
\end{equation}
This is a rank-one matrix whose spectral norm is trivially the product of the Euclidean norms of its generating vectors:
\begin{equation}
    \label{eq:spiked-population-interaction}
    \|\Sigma_{\mathcal{A}^c \mathcal{A}} \Sigma_{\mathcal{A}\mathcal{A}}^{-1}\|_2 = \frac{\rho}{1 + (s-1)\rho} \|\mathbf{1}_{d-s}\|_2 \|\mathbf{1}_s\|_2 = \frac{\rho \sqrt{s(d-s)}}{1 + (s-1)\rho}.
\end{equation}

To make the optimization interpretation precise, define the population active KKT matrix
\begin{equation}
    \overline M_{\mathcal A}
    :=
    \begin{pmatrix}
        \Sigma_{\mathcal A\mathcal A} & 0\\
        \Sigma_{\mathcal A^c\mathcal A} & \lambda_n I
    \end{pmatrix}.
\end{equation}
Substituting \eqref{eq:spiked-active-inverse} and \eqref{eq:spiked-population-interaction} into \cref{bound:dimension-free-bound} yields
\begin{equation}
    \label{eq:spiked-population-hoffman}
    H^{2,2}(\overline\Gamma_{\mathrm{aff},\mathcal A})
    \le
    \sqrt{
        \frac{1}{(1-\rho)^2}
        +
        \frac{1}{\lambda_n^2}
        \left[
            1+
            \left(
                \frac{\rho\sqrt{s(d-s)}}
                {1+(s-1)\rho}
            \right)^2
        \right]
    },
\end{equation}
where $\overline\Gamma_{\mathrm{aff},\mathcal A}$ denotes the solution set of the affine system defined by $\overline M_{\mathcal A}$.

The asymptotic behavior of the interaction term depends on the regime considered. If the sparsity level scales proportionally with the ambient dimension, e.g., $s=\alpha d$ for some fixed $\alpha \in (0,1)$, then
\begin{equation}
    \frac{\rho\sqrt{s(d-s)}}{1+(s-1)\rho} \to \sqrt{\frac{1-\alpha}{\alpha}} = \Theta(1), \qquad \text{as } d\to\infty.
\end{equation}
Thus, the interaction remains bounded when the support occupies a fixed proportion of the ambient dimension. In contrast, if the support size $s$ is held fixed while the ambient dimension $d$ increases, then the interaction term scales as
\begin{equation}
    \label{eq:spiked-population-interaction-scaling}
    \frac{\rho\sqrt{s(d-s)}}{1+(s-1)\rho} = \Theta(\sqrt{d}), \qquad \text{as } d\to\infty.
\end{equation}
The population active-inactive interaction can therefore deteriorate as correlated inactive variables are added, even though the active support and its cardinality remain unchanged.

Further, for every $v\in\mathbb R^d$,
\begin{equation}
    v^\top\Sigma v
    =
    (1-\rho)\|v\|_2^2
    +
    \rho\left(\sum_{j=1}^d v_j\right)^2
    \ge
    (1-\rho)\|v\|_2^2.
\end{equation}
Hence the population covariance has minimum eigenvalue $1-\rho>0$ and satisfies a population restricted eigenvalue bound with constant at least $1-\rho$. Furthermore, standard random-design results imply that, under suitable sample-size conditions, the empirical design drawn from this model satisfies the RE condition with high probability~\citep[Theorem 7.16 and Example 7.18]{wainwright2019highdimensional}. This does not require the empirical Gram matrix to be globally invertible.

The spiked model therefore illustrates that statistical recovery guarantees do not fully capture the population optimization complexity. Although the RE condition ensures stable estimation behavior, the optimization geometry can still deteriorate through two mechanisms: first, the loss of curvature as $\rho \to 1$, since $\|\Sigma_{\mathcal A\mathcal A}^{-1}\|_2 = (1-\rho)^{-1}$, and second, growth of active–inactive interaction when additional correlated inactive variables are introduced even while the support size $s$ and correlation $\rho$ remain fixed. It follows that i) a lower bound on population curvature alone does not provide a dimension-independent bound on the population KKT interaction and ii) even when linear convergence holds in theory, the associated theoretical constants may become arbitrarily poor in practice.

We emphasize that \eqref{eq:spiked-population-hoffman} is an explicit deterministic upper bound for the population KKT system. Transferring this formula to the empirical Hoffman constant requires additional concentration control for $\widehat G_{\mathcal A\mathcal A}$ and $\widehat G_{\mathcal A^c\mathcal A}$.

\begin{figure}[ht]
    \centering
    \includegraphics[width=0.7\textwidth]{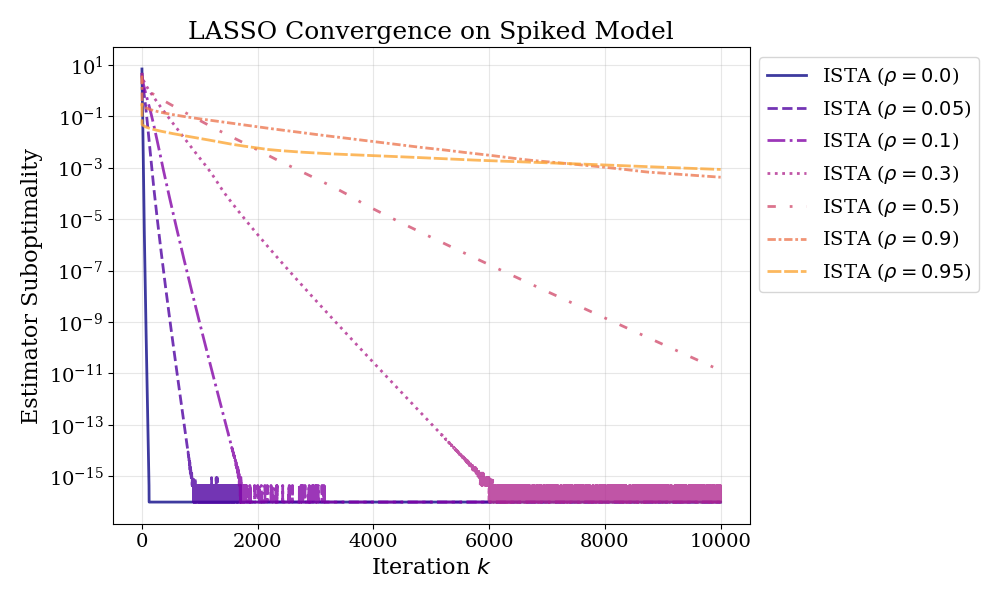}
    \caption{\textbf{LASSO convergence on the spiked model.} Estimator suboptimality for ISTA across varying correlation values $\rho \in [0, 0.95]$. The population covariance has minimum eigenvalue $1-\rho$, while the corresponding empirical designs satisfy the RE condition with high probability. Nevertheless, the observed ISTA convergence deteriorates as $\rho$ increases and can be made arbitrarily slow as $\rho \to 1$.}
    \label{fig:spiked-convergence}
\end{figure}

This phenomenon is empirically seen in \cref{fig:spiked-convergence}. As the correlation $\rho$ increases, the linear convergence rate of the ISTA solver degrades. This motivates a discussion of precise convergence rates beyond existential results of linear convergence. Further, it motivates the use of the Hoffman constant as a more faithful measure of optimization complexity than standard statistical assumptions.

\section{Connecting Optimization Geometry to Statistical Theory}
\label{sec:restricted-statistical-conditions}

The spiked model demonstrates a fundamental gap between statistics and optimization: conditions such as the RE property support stable estimation and prediction-error guarantees, but they fail to bound the active-inactive feature interaction which also affects the algorithmic behavior. 

To obtain favorable high-dimensional convergence guarantees of algorithms (e.g., proximal gradient methods) through these Hoffman-based bounds, we require control of the relevant Hoffman constant. That is, we must ensure that the restricted Hoffman constant $H^{2,2}_{\mathcal{S}_{\mathcal{A}}}$ is reasonably bounded. Looking at our explicit spectral bounds (e.g., \cref{bound:dimension-free-bound}), this requires simultaneous control over two specific matrix quantities: the active curvature $\|G_{\mathcal{A}\mathcal{A}}^{-1}\|_2$ and the active-inactive interaction $\|G_{\mathcal{A}^c\mathcal{A}}\|_2$.

In this section, we provide results to demonstrate how restricted conditions from the high-dimensional statistics affect different aspects of the Hoffman constant decomposition. Beyond the utility of these results, there is pedagogical value in seeing how these statistically-motivated conditions have a secondary geometric motivation. We prove a hierarchy of geometric stability: (i) the restricted nullspace property forces the KKT matrix to be invertible, and thus allows the existence of the explicit bounds from \cref{subsec:spectral-bounds}; (ii) the restricted eigenvalue condition bounds the primal curvature of the active set; (iii) the restricted isometry property bounds the primal curvature of any sparse active set, partially explaining the superior observed convergence rates; and (iv) that standard incoherence conditions translate poorly to the spectral setting.

\subsection{Restricted Nullspace Property Implies Invertible KKT system}

In \cref{sec:application-lasso}, we assumed the design matrix was in general position (\cref{assum:general-position}) to guarantee the active KKT matrix $M_{\mathcal{A}}$ is invertible. While general position holds almost surely for continuous random design matrices, it can fail for discrete data, categorical variables, and many structured deterministic designs.

If we relax the general position assumption, a weaker statistical condition sufficient to guarantee invertibility is the restricted nullspace property (RNP). 
\begin{definition}[Restricted nullspace property; \citet{wainwright2019highdimensional}]
    A design matrix $A \in \mathbb{R}^{n \times d}$ satisfies the RNP with respect to an active set $\mathcal{A}$ and parameter $\alpha \in \mathbb{R}_+$ if the intersection of the nullspace of $A$ and the restricted cone $\mathcal{C}_\alpha(\mathcal{A})$ defined in \eqref{eq: restricted cone} is trivial:
    \begin{equation}
        \mynull(A) \cap \mathcal{C}_\alpha(\mathcal{A}) = \{0\}.
    \end{equation}
\end{definition}
Note that this is weaker than the RE condition in the sense that if the RE condition holds with parameter $\gamma > 0$ for some active set $\mathcal{A}$ and some $\alpha \in \mathbb{R}_+$, then the RNP holds for $\mathcal{A}$ and all $\alpha' \in \mathbb{R}_+$ satisfying $\alpha' \le \alpha$.

This property acts as the structural requirement to guarantee that the local geometry is well-defined. While a finite Hoffman constant always exists for polyhedral sets (potentially utilizing the pseudoinverse), the exact invertibility of $M_{\mathcal{A}}$ is sufficient for $\Gamma_{\text{aff}, \mathcal{A}}$ to be a singleton and thus \cref{prop:bounded-regularity} to hold.

\begin{lemma}[The RNP implies an invertible KKT system]
Let $\mathcal{A}$ be a fixed active set. If the design matrix $A$ satisfies the RNP with respect to $\mathcal{A}$ for any $\alpha \in \mathbb{R}_+$, then the active KKT matrix $M_{\mathcal{A}}$ is nonsingular.
\end{lemma}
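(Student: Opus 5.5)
The plan is to reduce nonsingularity of $M_{\mathcal{A}}$ to positive definiteness of the active Gram block $G_{\mathcal{A}\mathcal{A}}$. As observed in \cref{subsec:lasso-assumptions}, the block lower-triangular structure gives $\det(M_{\mathcal{A}}) = \det(G_{\mathcal{A}\mathcal{A}})\,\lambda_n^{d-s}$. With $\lambda_n > 0$ this factorization means $M_{\mathcal{A}}$ is nonsingular exactly when $\mynull A_{\mathcal{A}} = \{0\}$. So it suffices to show that the RNP with respect to $\mathcal{A}$, for some $\alpha \in \mathbb{R}_+$, forces $A_{\mathcal{A}}$ to have full column rank.

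The key step uses the remark after \eqref{eq: restricted cone}: the sparse subspace $\mathcal{S}_{\mathcal{A}}$ lies in $\mathcal{C}_\alpha(\mathcal{A})$ for every $\alpha \ge 0$, because vectors in $\mathcal{S}_{\mathcal{A}}$ have $\|\Delta_{\mathcal{A}^c}\|_1 = 0$.

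1. Take any $u \in \mathbb{R}^{|\mathcal{A}|}$ with $A_{\mathcal{A}} u = 0$.
2. Let $\Delta \in \mathbb{R}^d$ be its zero-extension, with $\Delta_{\mathcal{A}} = u$ and $\Delta_{\mathcal{A}^c} = 0$. Then $A\Delta = A_{\mathcal{A}} u = 0$, so $\Delta \in \mynull(A)$.
3. Since $\Delta \in \mathcal{S}_{\mathcal{A}} \subseteq \mathcal{C}_\alpha(\mathcal{A})$, the RNP gives $\Delta = 0$, hence $u = 0$.

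This shows $\mynull A_{\mathcal{A}} = \{0\}$. It follows that $G_{\mathcal{A}\mathcal{A}} = \frac{1}{n}A_{\mathcal{A}}^\top A_{\mathcal{A}}$ is positive definite, since $u^\top G_{\mathcal{A}\mathcal{A}} u = \frac{1}{n}\|A_{\mathcal{A}}u\|_2^2 > 0$ for every $u \neq 0$.

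To finish, I would note that nonsingularity can also be seen directly from the explicit inverse \eqref{eq:inverse-formulation}, which is well defined once $G_{\mathcal{A}\mathcal{A}}^{-1}$ exists and $\lambda_n > 0$. Alternatively, one can argue from the kernel: if $M_{\mathcal{A}}(x,w) = 0$, the first block row $G_{\mathcal{A}\mathcal{A}}x = 0$ gives $x = 0$, and then the second block row gives $\lambda_n w = 0$, so $w = 0$.

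There is no real obstacle here. The only points needing care are the standing assumption $\lambda_n > 0$, which should be stated explicitly, and noting that the argument uses only the weakest part of the cone, namely $\mathcal{S}_{\mathcal{A}}$. This is why any $\alpha$, including $\alpha = 0$, suffices.
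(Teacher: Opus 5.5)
Your proof is correct and follows essentially the same route as the paper. Both reduce nonsingularity of $M_{\mathcal{A}}$ to nonsingularity of $G_{\mathcal{A}\mathcal{A}}$ via the block lower-triangular structure with $\lambda_n > 0$, and then zero-extend a null vector of $A_{\mathcal{A}}$ into $\mathcal{S}_{\mathcal{A}} \subseteq \mathcal{C}_\alpha(\mathcal{A})$ to invoke the RNP. The paper argues by contrapositive while you argue directly, and your version makes explicit the identity $u^\top G_{\mathcal{A}\mathcal{A}} u = \frac{1}{n}\|A_{\mathcal{A}} u\|_2^2$ linking the Gram block to $A_{\mathcal{A}}$, which the paper's proof leaves implicit.
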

\begin{proof}
We prove the contrapositive: if $M_{\mathcal{A}}$ is singular, then the RNP must be violated.

Recall that $M_{\mathcal{A}}$ is block lower-triangular as in \eqref{eq:lasso-affine-system}. It is singular if and only if one of its diagonal blocks is singular. Since $\lambda_n I$ is positive definite (for $\lambda_n > 0$), $M_{\mathcal{A}}$ is singular if and only if $G_{\mathcal{A}\mathcal{A}} = \frac{1}{n}A_{\mathcal{A}}^\top A_{\mathcal{A}}$ is singular.

If $G_{\mathcal{A}\mathcal{A}}$ is singular, its columns are linearly dependent. Thus, there exists a nonzero vector $v_{\mathcal{A}} \in \mathbb{R}^{|\mathcal{A}|}$ such that $A_{\mathcal{A}} v_{\mathcal{A}} = 0$. Let us construct the full ambient vector $v \in \mathbb{R}^d$ by setting $v_{\mathcal{A}}$ on the active set and $v_{\mathcal{A}^c} = 0$. By construction, $Av = A_{\mathcal{A}} v_{\mathcal{A}} = 0$, so $v \in \text{null}(A)$.

We now check if $v \in \mathcal{C}_\alpha(\mathcal{A})$. The restricted cone is defined as $\{ u \in \mathbb{R}^d \mid \|u_{\mathcal{A}^c}\|_1 \le \alpha \|u_{\mathcal{A}}\|_1 \}$. For our constructed vector $v$, the inactive mass is $\|v_{\mathcal{A}^c}\|_1 = 0$ and the active mass is $\|v_{\mathcal{A}}\|_1 > 0$. The condition $0 \le \alpha \|v_{\mathcal{A}}\|_1$ holds trivially for any $\alpha \in \mathbb{R}_+$. 

Thus, $v \in \mathcal{C}_\alpha(\mathcal{A})$. Since $v$ is a nonzero vector and is in both the nullspace and the restricted cone, the RNP with respect to $\mathcal{A}$ is violated.
\end{proof}
A consequence of this result is that the RNP cannot hold if the active set is nonsparse ($|\mathcal{A}| > n$). If $|\mathcal{A}| > n$, the submatrix $A_{\mathcal{A}}$ is necessarily rank-deficient and must have a nontrivial nullspace, allowing the construction of a restricted null vector that violates the RNP. Indeed, RNP is a sufficient recovery condition~\citep[Theorem 7.8]{wainwright2019highdimensional}.

\subsection{Restricted Eigenvalue Condition Bounds Active Curvature}

We next discuss the relationship between the RE condition and the curvature in the active set. In classical high-dimensional statistics, efficient coefficient recovery theories such as \cite{bickel2009simultaneous} typically require the RE condition to hold \textit{uniformly} over all possible active sets of size $s$ so any potential sparse signal can be recovered. However, from an optimization perspective, evaluating conditioning uniformly over all $s$-sparse sets can be pathologically pessimistic. We illustrate this with a simple example.

\begin{example}[Pathological Inactive Features]
Consider a design matrix $A \in \mathbb{R}^{n \times 3}$ with columns $A_1 = \sqrt{n} e_1$ and $A_2 = A_3 = \sqrt{n} e_2$. Let the target be generated by the true sparse parameter vector $\beta^* = [\beta^*_1, 0, 0]^\top$ with $\beta^*_1 \neq 0$, such that $y = \sqrt{n} \beta^*_1 e_1 + \epsilon$. The true active set is $\mathcal{A}^* = \{1\}$, and the Gram matrix is:
\begin{equation}
    G = \frac{1}{n} A^\top A = \begin{pmatrix} 1 & 0 & 0 \\ 0 & 1 & 1 \\ 0 & 1 & 1 \end{pmatrix}.
\end{equation}
If we evaluate the RE condition uniformly over all active sets of sparsity $s=2$, we must consider the noise indices $\{2, 3\}$. The restricted cone for this subset contains the null direction $\Delta = [0, 1, -1]^\top$, for which $A\Delta = 0$. Consequently, the 2-sparse RE condition does not hold. Standard statistical theory cannot guarantee recovery because the ambient matrix cannot distinguish between the identically correlated noise features $2$ and $3$.

However, for the specific true active set $\mathcal{A}^* = \{1\}$, the active submatrix is $G_{\mathcal{A}^*\mathcal{A}^*} = 1$ and the interaction block is $G_{(\mathcal{A}^*)^c \mathcal{A}^*} = [0, 0]^\top$. The restricted Hoffman constant on the true active subspace is $H^{2,2}_{\mathcal{S}_{\mathcal{A}^*}}(\Gamma_{\text{aff}, \mathcal{A}^*}) = \frac{1}{\min(1, \lambda_n)}$. The geometric metric correctly identifies that the ambient singularity between features $2$ and $3$ is completely irrelevant to the optimization trajectory if the algorithm is initialized well, as those noise features are geometrically decoupled from the active signal.
\end{example}

This example highlights why the uniform $s$-sparse regime can be overly conservative and, more generally, why the optimization perspective may be interested in these constants at the level of active sets more so than at the level of certain sparsity thresholds. Therefore, our geometric framework evaluates the RE condition with respect to the specific active set $\mathcal{A}$. We now demonstrate how this condition explicitly bounds the eigenvalue of the active submatrix by exploiting the geometry of the restricted cone.

\begin{proposition}[The RE condition bounds active curvature]
    \label{prop:re-active-eigenvalue}
    Suppose the RE condition holds with $\gamma > 0$ with respect to the active set $\mathcal{A}$ and parameter $\alpha \in \mathbb{R}_+$. Then, the minimum eigenvalue of the active Gram submatrix satisfies $\lambda_{\min}(G_{\mathcal{A}\mathcal{A}}) \ge \gamma$.
\end{proposition}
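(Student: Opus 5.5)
The plan is to use the Rayleigh quotient characterization of $\lambda_{\min}(G_{\mathcal{A}\mathcal{A}})$ and to test the RE inequality only on vectors supported on $\mathcal{A}$. Since $G_{\mathcal{A}\mathcal{A}} = \frac{1}{n}A_{\mathcal{A}}^\top A_{\mathcal{A}}$ is symmetric positive semidefinite, we have
\begin{equation}
    \lambda_{\min}(G_{\mathcal{A}\mathcal{A}}) = \min_{u \in \mathbb{R}^{|\mathcal{A}|},\, u \neq 0} \frac{u^\top G_{\mathcal{A}\mathcal{A}} u}{\|u\|_2^2} = \min_{u \neq 0} \frac{\frac{1}{n}\|A_{\mathcal{A}} u\|_2^2}{\|u\|_2^2}.
\end{equation}
It therefore suffices to show that $\frac{1}{n}\|A_{\mathcal{A}}u\|_2^2 \ge \gamma \|u\|_2^2$ for every nonzero $u \in \mathbb{R}^{|\mathcal{A}|}$.

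To do this, fix a nonzero $u$ and lift it to the ambient vector $\Delta \in \mathbb{R}^d$ defined by $\Delta_{\mathcal{A}} = u$ and $\Delta_{\mathcal{A}^c} = 0$. Then $\Delta \in \mathcal{S}_{\mathcal{A}}$. As already observed after \eqref{eq: restricted cone}, $\mathcal{S}_{\mathcal{A}} \subseteq \mathcal{C}_\alpha(\mathcal{A})$ for every $\alpha \in \mathbb{R}_+$, because $\|\Delta_{\mathcal{A}^c}\|_1 = 0 \le \alpha\|\Delta_{\mathcal{A}}\|_1$. Moreover $\Delta \neq 0$, so $\Delta \in \mathcal{C}_\alpha(\mathcal{A})\setminus\{0\}$.

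Applying the RE condition to this $\Delta$ gives $\frac{1}{n}\|A\Delta\|_2^2 \ge \gamma\|\Delta\|_2^2$. Two identities then finish the argument. First, $A\Delta = A_{\mathcal{A}}u$ because $\Delta$ vanishes off $\mathcal{A}$. Second, $\|\Delta\|_2 = \|u\|_2$. Substituting these yields the desired inequality for every nonzero $u$, and taking the minimum over $u$ gives $\lambda_{\min}(G_{\mathcal{A}\mathcal{A}}) \ge \gamma$.

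There is no real obstacle here. The only point needing care is checking that the lifted vector actually lies in the cone, and this holds trivially for every $\alpha$. The RE condition is much stronger than the conclusion, since we only use it on the face $\mathcal{S}_{\mathcal{A}}$ of the cone.

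As a consequence, $G_{\mathcal{A}\mathcal{A}}$ is positive definite and $\|G_{\mathcal{A}\mathcal{A}}^{-1}\|_2 \le 1/\gamma$. This bound can then be fed into \cref{bound:dimension-free-bound}. Nothing in the argument says anything about $G_{\mathcal{A}^c\mathcal{A}}$, which is exactly why the interaction term remains uncontrolled.
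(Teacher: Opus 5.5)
Your proposal is correct and follows essentially the same route as the paper's proof: zero-pad a vector on $\mathcal{A}$, check that it lies in $\mathcal{C}_\alpha(\mathcal{A})$, apply the RE inequality, and conclude via the Rayleigh quotient characterization of $\lambda_{\min}(G_{\mathcal{A}\mathcal{A}})$. Your explicit check that the lifted vector is nonzero, so that it lies in $\mathcal{C}_\alpha(\mathcal{A})\setminus\{0\}$, matches the domain on which the RE condition is stated.
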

\begin{proof}
    Let $v \in \mathbb{R}^{|\mathcal{A}|}$ be an arbitrary nonzero vector in the active subspace. We embed this vector into the full ambient dimension $\mathbb{R}^d$ by padding the inactive entries with zeros, defining $\Delta = (v, 0_{\mathcal{A}^c})$.
    
    We first verify that $\Delta$ lies within the restricted cone $\mathcal{C}_{\alpha}(\mathcal{A})$. For our constructed vector, the inactive mass is zero ($\|\Delta_{\mathcal{A}^c}\|_1 = 0$), and the active mass is positive ($\|\Delta_{\mathcal{A}}\|_1 = \|v\|_1 > 0$). The inequality condition $0 \le \alpha \|v\|_1$ holds trivially for any $\alpha \ge 0$.
    
    Thus, the entire subspace of vectors supported on $\mathcal{A}$ is contained within the restricted cone. By the definition of the RE condition, the following curvature bound holds for all vectors in the cone:
    \begin{equation}
        \frac{1}{n} \|A \Delta\|_2^2 \ge \gamma \|\Delta\|_2^2 \quad \forall \Delta \in \mathcal{C}_{\alpha}(\mathcal{A}).
    \end{equation}
    Substituting $\Delta = (v, 0_{\mathcal{A}^c})$ and recognizing that the quadratic form evaluates exactly to $\frac{1}{n}\|A\Delta\|_2^2 = v^\top G_{\mathcal{A}\mathcal{A}} v$, we obtain $v^\top G_{\mathcal{A}\mathcal{A}} v \ge \gamma \|v\|_2^2$. Since this inequality holds for any arbitrary $v \neq 0$, the Rayleigh quotient characterization guarantees that the minimum eigenvalue is bounded below by $\gamma$.
\end{proof}

\begin{bound}[Hoffman bound via the RE condition]
Assume the RE condition holds with $\gamma > 0$ with respect to the active set $\mathcal{A}$ and parameter $\alpha \in \mathbb{R}_+$. For any $\lambda_n > 0$, the restricted Hoffman constant $H^{2,2}_{\mathcal{S}_{\mathcal{A}}}(\Gamma_{\text{aff}, \mathcal{A}})$ has the following bound:
\begin{equation}
    H^{2,2}_{\mathcal{S}_{\mathcal{A}}}(\Gamma_{\text{aff}, \mathcal{A}}) \le \sqrt{ \frac{1}{\gamma^2} + \frac{1}{\lambda_n^2} \left( 1 + \frac{\|G_{\mathcal{A}^c \mathcal{A}}\|_2^2}{\gamma^2} \right) }.
\end{equation}
\end{bound}
\begin{proof}
    Recall \cref{bound:dimension-free-bound}:
    \begin{equation}
        H^{2,2}_{\mathcal{S}_{\mathcal{A}}}(\Gamma_{\text{aff}, \mathcal{A}}) \le \sqrt{ \|G_{\mathcal A\mathcal A}^{-1}\|_2^2 + \frac{1}{\lambda_n^2} \left( 1 + \|G_{\mathcal A^c\mathcal A} G_{\mathcal A\mathcal A}^{-1}\|_2^2 \right) }.
    \end{equation}
    By \cref{prop:re-active-eigenvalue}, the RE condition guarantees that $\lambda_{\min}(G_{\mathcal{A}\mathcal{A}}) \ge \gamma$. Because the active Gram submatrix $G_{\mathcal{A}\mathcal{A}}$ is symmetric positive definite, the spectral norm of its inverse is bounded by $\|G_{\mathcal{A}\mathcal{A}}^{-1}\|_2 \le \frac{1}{\gamma}$.
    
    Applying matrix submultiplicativity to the interaction term yields:
    \begin{equation}
        \|G_{\mathcal A^c\mathcal A}G_{\mathcal A\mathcal A}^{-1}\|_2 \le \|G_{\mathcal A^c\mathcal A}\|_2 \|G_{\mathcal A\mathcal A}^{-1}\|_2 \le \frac{\|G_{\mathcal A^c\mathcal A}\|_2}{\gamma}.
    \end{equation}
    Substituting these bounded components back into the spectral bound yields the stated result.
\end{proof}

\subsection{Restricted Isometry Property Bounds Active Curvature and Bilinear Interaction Uniformly}

While the local RE condition guarantees that the active submatrix is invertible and provides a curvature bound $\gamma$ on a particular active set, evaluating this constant can be analytically difficult as it depends on the specific geometry of the restricted cone. Furthermore, it only provides a lower bound on the curvature. In optimization, evaluating an upper bound on the eigenvalues is equally useful, as it determines the (restricted) Lipschitz constant of the active gradient and thus bounds the maximum allowable step-size in algorithms like ISTA~\citep{chaudhry2026trajectory}.

It has been observed empirically that when the design matrix $A$ satisfies the RIP, convergence tends to be much faster. We will now show that the RIP provides the stronger spectral structure required to bound the active curvature uniformly over all sufficiently sparse sets, naturally yielding both the lower bound for geometric stability and the upper bound for algorithmic step-sizes.

Recall that the RIP of order $s$ with constant $\delta_s \in (0, 1)$ guarantees that the eigenvalues of any active Gram submatrix $G_{\mathcal{A}\mathcal{A}}$ (where $|\mathcal{A}| \le s$) lie in the interval $[1-\delta_s, 1+\delta_s]$. The upper bound of this interval limits the Lipschitz constant of the active gradient ($L \le 1+\delta_s$), guaranteeing a safe, bounded optimization step-size. Simultaneously, the lower bound provides a uniform, dimension-independent limit on the active Hoffman curvature.

\begin{proposition}[RIP bounds active curvature]
    \label{prop:rip-active-curvature}
    Suppose the design matrix $A$ satisfies the RIP of order $s$ with constant $\delta_s < 1$. Then for any active set $\mathcal{A}$ with $|\mathcal{A}| \le s$, the spectral norm of the active inverse is bounded by:
    \begin{equation}
        \|G_{\mathcal{A}\mathcal{A}}^{-1}\|_2 \le \frac{1}{1-\delta_s}.
    \end{equation}
\end{proposition}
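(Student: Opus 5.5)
The plan is to mirror the embedding argument of \cref{prop:re-active-eigenvalue}, using the lower half of the RIP inequality in place of the RE condition. Fix an active set $\mathcal{A}$ with $|\mathcal{A}| \le s$ and an arbitrary nonzero $v \in \mathbb{R}^{|\mathcal{A}|}$. Form the ambient vector $\Delta = (v, 0_{\mathcal{A}^c}) \in \mathbb{R}^d$ by zero-padding. Its support lies in $\mathcal{A}$, so $\|\Delta\|_0 \le |\mathcal{A}| \le s$, and $\Delta$ is $s$-sparse. This makes $\Delta$ admissible in the RIP definition.

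Next, expand the quadratic form. We have $A\Delta = A_{\mathcal{A}} v$, so
\begin{equation}
    \frac{1}{n}\|A\Delta\|_2^2 = v^\top G_{\mathcal{A}\mathcal{A}} v, \qquad \|\Delta\|_2 = \|v\|_2 .
\end{equation}
The lower RIP inequality then gives $v^\top G_{\mathcal{A}\mathcal{A}} v \ge (1-\delta_s)\|v\|_2^2$ for every nonzero $v$. By the Rayleigh quotient characterization, $\lambda_{\min}(G_{\mathcal{A}\mathcal{A}}) \ge 1-\delta_s$, and this is strictly positive because $\delta_s < 1$. Hence $G_{\mathcal{A}\mathcal{A}}$ is symmetric positive definite and invertible, independently of \cref{assum:general-position}. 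The same computation with the upper RIP inequality yields $\lambda_{\max}(G_{\mathcal{A}\mathcal{A}}) \le 1+\delta_s$, which is the Lipschitz remark made before the statement.

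The last step uses the fact that $G_{\mathcal{A}\mathcal{A}}$ is symmetric positive definite. Its inverse then has eigenvalues $1/\lambda_i(G_{\mathcal{A}\mathcal{A}})$, and its spectral norm equals its largest eigenvalue. Therefore
\begin{equation}
    \|G_{\mathcal{A}\mathcal{A}}^{-1}\|_2 = \frac{1}{\lambda_{\min}(G_{\mathcal{A}\mathcal{A}})} \le \frac{1}{1-\delta_s}.
\end{equation}
Since $\mathcal{A}$ was an arbitrary set of size at most $s$, the bound holds uniformly over $\mathcal{P}_s([d])$.

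There is no genuine obstacle in this argument. The one point needing care is the strictness $\delta_s < 1$, which is what guarantees invertibility and makes the reciprocal well defined. A second point is that the embedding preserves $s$-sparsity, which is immediate from $|\mathcal{A}| \le s$.
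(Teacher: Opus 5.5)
Your proof is correct and follows the paper's reasoning. The paper gives no separate proof; it relies on the remark that the RIP confines the eigenvalues of every $G_{\mathcal{A}\mathcal{A}}$ with $|\mathcal{A}|\le s$ to $[1-\delta_s,1+\delta_s]$. Your zero-padding and Rayleigh-quotient argument (mirroring \cref{prop:re-active-eigenvalue}) is exactly the justification of that fact, and $\|G_{\mathcal{A}\mathcal{A}}^{-1}\|_2 = 1/\lambda_{\min}(G_{\mathcal{A}\mathcal{A}})$ then finishes the proof.
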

Substituting this into our explicit spectral bound (\cref{bound:dimension-free-bound}) yields a uniform Euclidean Hoffman bound over $s$-sparse active sets that depends on the RIP constant and the interaction term:
\begin{equation}
    H^{2,2}_{\mathcal{S}_{\mathcal{A}}}(\Gamma_{\text{aff}, \mathcal{A}}) \le \sqrt{ \frac{1}{(1-\delta_s)^2} + \frac{1}{\lambda_n^2} \left( 1 + \|G_{\mathcal{A}^c \mathcal{A}} G_{\mathcal{A}\mathcal{A}}^{-1}\|_2^2 \right) }.
\end{equation}

Beyond bounding the primal active curvature, a second mechanism of interest is whether the RIP can control the active-inactive interaction block. We now demonstrate that the RIP successfully constrains this interaction when evaluated as a bilinear form over the restricted cone, preventing arbitrary leakage into the inactive space along structured directions.

To establish this control, we rely on the following fundamental restricted orthogonality result, which bounds the inner product between disjoint sparse supports.
\begin{lemma}[Restricted orthogonality; \citet{candes2008rip}]
    \label{lem:candes-interaction}
    For a matrix satisfying the RIP of order $s + s'$ with constant $\delta_{s + s'}$, we have
    \begin{equation}
    \left| \left\langle \frac{1}{\sqrt{n}} A x, \frac{1}{\sqrt{n}} A x' \right\rangle \right| \le \delta_{s+s'} \|x\|_2 \|x'\|_2
    \end{equation}
    for all vectors $x, x'$ supported on disjoint subsets $T, T' \subseteq [d]$ with $|T| \le s$ and $|T'| \le s'$.
\end{lemma}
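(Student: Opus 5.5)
The plan is to follow the standard polarization argument of \citet{candes2008rip}. By homogeneity of both sides in $x$ and $x'$, it suffices to prove the inequality for unit vectors $\|x\|_2 = \|x'\|_2 = 1$; the general case then follows by rescaling. If either vector is zero there is nothing to show.

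For unit $x, x'$ with disjoint supports $T, T'$, the vectors $x + x'$ and $x - x'$ are supported on $T \cup T'$. This set has cardinality at most $s + s'$, so both vectors are $(s+s')$-sparse. Disjointness also gives orthogonality, so
\begin{equation}
    \|x \pm x'\|_2^2 = \|x\|_2^2 + \|x'\|_2^2 = 2 .
\end{equation}
Applying the RIP of order $s+s'$ to $x + x'$ and $x - x'$ then yields
\begin{equation}
    2(1-\delta_{s+s'}) \le \frac{1}{n}\|A(x \pm x')\|_2^2 \le 2(1+\delta_{s+s'}).
\end{equation}

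Next, by the polarization identity,
\begin{equation}
    \left\langle \tfrac{1}{\sqrt{n}}Ax, \tfrac{1}{\sqrt{n}}Ax' \right\rangle = \frac{1}{4n}\left( \|A(x+x')\|_2^2 - \|A(x-x')\|_2^2 \right).
\end{equation}
The right-hand side is largest when the first norm attains its upper RIP bound and the second attains its lower bound, and smallest in the reverse case. This gives
\begin{equation}
    \left|\left\langle \tfrac{1}{\sqrt{n}}Ax, \tfrac{1}{\sqrt{n}}Ax' \right\rangle\right| \le \frac{1}{4}\left(2(1+\delta_{s+s'}) - 2(1-\delta_{s+s'})\right) = \delta_{s+s'} .
\end{equation}
Undoing the normalization produces the factor $\|x\|_2\|x'\|_2$, which completes the argument.

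There is no substantial obstacle. The only point needing care is that the sparsity order used must be $s + s'$, since $x \pm x'$ can have up to $|T| + |T'|$ nonzeros. It is also disjointness, through the orthogonality of $x$ and $x'$, that makes $\|x \pm x'\|_2^2$ equal exactly $2$; without it, the RIP bounds on the two terms would not cancel to leave $\delta_{s+s'}$.
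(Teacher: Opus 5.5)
Your polarization argument is correct and matches the standard proof in \citet{candes2008rip}: RIP of order $s+s'$ applied to $x \pm x'$, which satisfy $\|x\pm x'\|_2^2 = 2$ by disjointness, followed by the parallelogram identity. The paper itself does not prove this lemma; it only states it with that citation.
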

While restricted orthogonality bounds the interaction between the active set and any single $s$-sparse subset of inactive variables, evaluating stability over the full unrestricted ambient space requires bounding the entire matrix $G_{\mathcal{A}^c \mathcal{A}}$. If one trivially partitions the inactive set into $\lceil (d-s)/s \rceil$ disjoint blocks and applies the triangle inequality, the resulting operator norm scales linearly as $\mathcal{O}(d/s) \delta_{2s}$. This unrestricted bound causes the theoretical Hoffman constant to arbitrarily blow up in high dimensions; a more sophisticated argument is required.

\begin{lemma}[The RIP bounds the bilinear interaction term on the restricted cone]
    \label{lem:rip-interaction-term-bound}
    Suppose the design matrix $A$ satisfies the RIP of order $2s$ with constant $\delta_{2s}$. Let $v \in \mathbb{R}^{|\mathcal{A}|}$ be the restriction of a candidate vector to the active set ($|\mathcal{A}| \le s$), and $u \in \mathbb{R}^{d-|\mathcal{A}|}$ be the restriction to the inactive set.
    
    Assume the composite vector $(v, u) \in \mathcal{C}_{\alpha}(\mathcal{A})$, meaning $\|u\|_1 \le \alpha \|v\|_1$, for any $\alpha \in \mathbb{R}_+$. Then the bilinear interaction term is uniformly bounded independent of the ambient dimension $d$:
    \begin{equation}
        |u^\top G_{\mathcal{A}^c \mathcal{A}} v| \le \delta_{2s} \alpha (1 + \sqrt{s}) \|v\|_2^2.
    \end{equation}
\end{lemma}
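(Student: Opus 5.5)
The plan is the standard ``shelling'' (block-decomposition) argument from compressed sensing: apply \cref{lem:candes-interaction} block by block instead of to the whole matrix $G_{\mathcal{A}^c\mathcal{A}}$. The $\ell_1$ cone constraint then controls the sum over blocks without any dependence on $d$.

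First I will rewrite the bilinear form in terms of the design. Let $\tilde v\in\mathbb{R}^d$ be $v$ padded by zeros off $\mathcal{A}$, and $\tilde u\in\mathbb{R}^d$ be $u$ padded by zeros on $\mathcal{A}$. Then
\begin{equation}
u^\top G_{\mathcal{A}^c\mathcal{A}} v = \left\langle \tfrac{1}{\sqrt n}A\tilde u, \tfrac{1}{\sqrt n}A\tilde v\right\rangle .
\end{equation}
Next I will sort the entries of $u$ in decreasing magnitude. I will partition $\mathcal{A}^c$ into consecutive blocks $T_1,T_2,\dots$ of size $s$, with the last block possibly smaller, so that $T_1$ carries the $s$ largest entries, $T_2$ the next $s$, and so on. Writing $\tilde u=\sum_k \tilde u_{T_k}$, each $\tilde u_{T_k}$ is supported on a set of size at most $s$ disjoint from $\mathcal{A}$, and $|\mathcal{A}|\le s$. \cref{lem:candes-interaction} with order $s+s=2s$ therefore gives
\begin{equation}
|u^\top G_{\mathcal{A}^c\mathcal{A}} v| \le \sum_k \left|\left\langle \tfrac{1}{\sqrt n}A\tilde u_{T_k}, \tfrac{1}{\sqrt n}A\tilde v\right\rangle\right| \le \delta_{2s}\|v\|_2 \sum_{k} \|u_{T_k}\|_2 .
\end{equation}

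The main step, and the only real obstacle, is bounding $\sum_k\|u_{T_k}\|_2$ independently of the number of blocks, which is roughly $d/s$. The naive bound would reproduce the $\mathcal{O}(d/s)$ blow-up discussed before the lemma.

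\begin{itemize}
\item For the head block I will use the crude bound $\|u_{T_1}\|_2\le \|u_{T_1}\|_1 \le \|u\|_1$.
\item For $k\ge 2$, every entry of $u_{T_k}$ is at most in magnitude every entry of $u_{T_{k-1}}$, hence at most their average $\|u_{T_{k-1}}\|_1/s$. This gives $\|u_{T_k}\|_2\le \sqrt{s}\cdot\|u_{T_{k-1}}\|_1/s = \|u_{T_{k-1}}\|_1/\sqrt s$.
\item Summing over $k\ge 2$ then telescopes into $\sum_{k\ge2}\|u_{T_k}\|_2\le \|u\|_1/\sqrt{s}$.
\end{itemize}

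Altogether $\sum_k\|u_{T_k}\|_2 \le \|u\|_1(1+1/\sqrt s)$.

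Finally I will invoke the cone condition together with Cauchy--Schwarz on the $s$-dimensional active block, $\|u\|_1\le\alpha\|v\|_1\le \alpha\sqrt{s}\|v\|_2$. This yields
\begin{equation}
|u^\top G_{\mathcal{A}^c\mathcal{A}} v| \le \delta_{2s}\|v\|_2\cdot \alpha\sqrt s\|v\|_2\,(1+1/\sqrt s) = \delta_{2s}\alpha(1+\sqrt s)\|v\|_2^2 ,
\end{equation}
which is the claimed bound. The case $u=0$ is trivial. If $|\mathcal{A}|<s$, the same bound holds, since $\|v\|_1\le\sqrt{|\mathcal{A}|}\|v\|_2\le\sqrt s\|v\|_2$.
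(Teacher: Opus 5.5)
Your proposal is correct and follows the paper's proof essentially step for step. It uses the same shelling of $u$ into size-$s$ blocks, restricted orthogonality of order $2s$ on each block, the crude bound $\|u_{T_1}\|_2 \le \|u\|_1$ on the head block, the averaging bound $\|u_{T_k}\|_2 \le \|u_{T_{k-1}}\|_1/\sqrt{s}$ on the tail, and finally the cone condition with $\|v\|_1 \le \sqrt{s}\|v\|_2$, giving the same constant $\delta_{2s}\alpha(1+\sqrt{s})$. One small wording point: the tail sum is controlled because the blocks are disjoint, so $\sum_{k\ge 2}\|u_{T_{k-1}}\|_1 \le \|u\|_1$, not by telescoping.
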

\begin{proof}
    The proof relies on a shelling decomposition~\citep{candes2005decoding}. Note that the inactive vector $u \in \mathbb{R}^{d-|\mathcal{A}|}$ is not assumed to be sparse. To apply the RIP to this dense vector, we partition the support of $u$ into disjoint sets $T_1, T_2, \dots$, each of size $s$, ordered by descending magnitude: $T_1$ contains the indices of the $s$ largest absolute entries of $u$, $T_2$ the next $s$ largest, and so on. By the triangle inequality, we evaluate the entire dense tail as an infinite sum of sparse blocks:
    \begin{equation}
        |u^\top G_{\mathcal{A}^c \mathcal{A}} v| = \left| \sum_{j \ge 1} u_{T_j}^\top G_{T_j \mathcal{A}} v \right| \le \sum_{j \ge 1} | u_{T_j}^\top G_{T_j \mathcal{A}} v |.
    \end{equation}

    For any index $j$, the sets $T_j$ and $\mathcal{A}$ are disjoint by construction, and $|T_j \cup \mathcal{A}| \le 2s$. Applying \cref{lem:candes-interaction} yields $| u_{T_j}^\top G_{T_j \mathcal{A}} v | \le \delta_{2s} \|u_{T_j}\|_2 \|v\|_2$. Substituting this into our sum gives:
    \begin{equation}
        \label{eq:shelling-sum}
        |u^\top G_{\mathcal{A}^c \mathcal{A}} v| \le \delta_{2s} \|v\|_2 \sum_{j \ge 1} \|u_{T_j}\|_2.
    \end{equation}

    To bound the sum $\sum \|u_{T_j}\|_2$ for the tail ($j \ge 2$), we note that because the coefficients are sorted, the magnitude of every entry in block $T_j$ is bounded by the average magnitude of the entries in the preceding block $T_{j-1}$. Stechkin's lemma yields $\|u_{T_j}\|_2 \le \frac{1}{\sqrt{s}} \|u_{T_{j-1}}\|_1$. Summing this inequality over all $j \ge 2$:
    \begin{equation}
        \sum_{j \ge 2} \|u_{T_j}\|_2 \le \frac{1}{\sqrt{s}} \sum_{j \ge 2} \|u_{T_{j-1}}\|_1 \le \frac{1}{\sqrt{s}} \|u\|_1.
    \end{equation}
    Invoking the cone condition $\|u\|_1 \le \alpha \|v\|_1$ and the norm equivalence $\|v\|_1 \le \sqrt{s} \|v\|_2$, the tail sum is bounded by:
    \begin{equation}
        \sum_{j \ge 2} \|u_{T_j}\|_2 \le \frac{\alpha \|v\|_1}{\sqrt{s}} \le \frac{\alpha \sqrt{s} \|v\|_2}{\sqrt{s}} = \alpha \|v\|_2.
    \end{equation}

    For the head block ($j=1$), because the $\ell_2$ norm is bounded by the $\ell_1$ norm and $T_1$ is a subset of the inactive variables, we have $\|u_{T_1}\|_2 \le \|u_{T_1}\|_1 \le \|u\|_1 \le \alpha \sqrt{s} \|v\|_2$. Combining the head and tail bounds into \eqref{eq:shelling-sum} yields the final result:
    \begin{equation}
        |u^\top G_{\mathcal{A}^c \mathcal{A}} v| \le \delta_{2s} \|v\|_2 \left( \alpha \sqrt{s} \|v\|_2 + \alpha \|v\|_2 \right) = \delta_{2s} \alpha (1+\sqrt{s}) \|v\|_2^2.
    \end{equation}
\end{proof}

As mentioned, the RE condition bounds the active curvature $\|G_{\mathcal{A}\mathcal{A}}^{-1}\|_2$ but leaves the active-inactive interaction term $\|G_{\mathcal{A}^c\mathcal{A}}G_{\mathcal{A}\mathcal{A}}^{-1}\|_2$ unconstrained; some control over this interaction is still needed to control the Hoffman constant. For the RIP, bounding the bilinear form over the cone demonstrates that the active-inactive feature interactions remains locally well-behaved along structured directions. 

\subsection{Translating Incoherence Conditions to Bound Active-Inactive Interaction}

However, a major theoretical hurdle remains: translating these localized properties into an unconditional $\ell_2$ operator norm bound for the KKT matrix inverse inevitably introduces a $\sqrt{d}$ dimensional penalty. One might naturally hope that the MIC, which directly controls $\|G_{\mathcal{A}^c\mathcal{A}}G_{\mathcal{A}\mathcal{A}}^{-1}\|_\infty$, could fill this gap. We now show that this hope is \textit{misplaced}: translating the MIC into a spectral bound inevitably introduces a $\sqrt{d}$ dimensional penalty, and neither MIC nor the usual entrywise incoherence scaling avoids this
dimensional dependence in the bound.

The obstruction is a norm conversion problem. The MIC is an $\ell_\infty$ condition, whereas our Hoffman bounds require control of the $\ell_2$ operator norm. For an $(d-s) \times s$ matrix $M$, the sharpest conversion available is
\begin{equation}
    \label{eq:norm-conversion}
    \|M\|_2 \leq \sqrt{d-s}\,\|M\|_\infty.  
\end{equation}
If the MIC holds with margin $\eta \in (0,1]$, so that $\|G_{\mathcal{A}^c\mathcal{A}}G_{\mathcal{A}\mathcal{A}}^{-1}\|_\infty \leq 1 - \eta$, then~\eqref{eq:norm-conversion} yields
\begin{equation}
    \|G_{\mathcal{A}^c\mathcal{A}}G_{\mathcal{A}\mathcal{A}}^{-1}\|_2 
    \leq (1-\eta)\sqrt{d-s}.
\end{equation}
This bound grows without bound as $d \to \infty$, so MIC alone cannot yield a spectral Hoffman constant independent of ambient dimension via this route.

The population spiked model from \cref{subsubsec:spiked-model} provides an explicit example. For fixed support size $s$ and correlation $\rho \in (0,1)$,
\begin{equation}
    \left\|
    \Sigma_{\mathcal A^c\mathcal A}
    \Sigma_{\mathcal A\mathcal A}^{-1}
    \right\|_\infty
    =
    \frac{\rho s}{1+(s-1)\rho}
    =1-\eta,
    \qquad
    \eta=\frac{1-\rho}{1+(s-1)\rho}>0,
\end{equation}
so the MIC holds. On the other hand, \eqref{eq:spiked-population-interaction} and \eqref{eq:spiked-population-interaction-scaling} gives
\begin{equation}
    \left\|
    \Sigma_{\mathcal A^c\mathcal A}
    \Sigma_{\mathcal A\mathcal A}^{-1}
    \right\|_2
    =
    \frac{\rho\sqrt{s(d-s)}}{1+(s-1)\rho}
    =
    \Theta(\sqrt{d}).
\end{equation}
Thus, a fixed population MIC margin can coexist with
diverging spectral interaction, even though the population
curvature remains bounded below by $1-\rho$. This establishes the obstruction at the population level; transferring it to empirical high-dimensional designs requires additional probabilistic analysis.

One might attempt a more refined argument using the typical incoherence scaling. In standard sparse recovery analyses, pairwise incoherence conditions imply element-wise bounds of the form $\|G_{\mathcal{A}^c\mathcal{A}}G_{\mathcal{A}\mathcal{A}}^{-1}\|_{\max} \lesssim 1/s$, where $\|\cdot\|_{\max}$ denotes the maximum absolute entry. Bounding the spectral norm via the Frobenius norm then gives
\begin{equation}
    \|G_{\mathcal{A}^c\mathcal{A}}G_{\mathcal{A}\mathcal{A}}^{-1}\|_2 
    \leq \|G_{\mathcal{A}^c\mathcal{A}}G_{\mathcal{A}\mathcal{A}}^{-1}\|_F 
    \leq \sqrt{s(d-s)}\,\|G_{\mathcal{A}^c\mathcal{A}}G_{\mathcal{A}\mathcal{A}}^{-1}\|_{\max} 
    \lesssim \frac{\sqrt{s(d-s)}}{s} \sim \sqrt{\frac{d}{s}},
\end{equation}
which still diverges as $d \to \infty$ for any fixed sparsity $s$. Keeping this quantity bounded would require $s \gtrsim d$, contradicting the sparse regime $s \ll d$ that motivates the LASSO in the first place.

For the interaction term to remain bounded as $d \to \infty$ via element-wise control alone, one would need the substantially stronger scaling
\begin{equation}
    \|G_{\mathcal{A}^c\mathcal{A}}G_{\mathcal{A}\mathcal{A}}^{-1}\|_{\max} 
    \lesssim \frac{1}{\sqrt{sd}},
    \label{eq:required-scaling}
\end{equation}
since this would give $\|G_{\mathcal{A}^c\mathcal{A}}G_{\mathcal{A}\mathcal{A}}^{-1}\|_F \lesssim \sqrt{s(d-s)/sd} = \mathcal{O}(1)$. The scaling in~\eqref{eq:required-scaling} is far stronger than anything guaranteed by standard incoherence (e.g., IRC or MIC) analyses.

This establishes a limitation of the Euclidean optimization geometry and motivates a discussion of the Hoffman constant in other norms, along with what statistical conditions can be used to control it.

\section{Hoffman Bounds in Non-Euclidean Norms}
\label{sec:hoffman-other-norms}

Apart from the motivation from \cref{sec:restricted-statistical-conditions}, Hoffman constants in other norms control the convergence rate of other algorithms as well as more general $\ell_p$ stability under perturbation. For example, the $\ell_1$ and $\ell_\infty$ norms control convergence in the case of coordinate methods~\citep{nutini2015coordinate}. 

In this section, we derive explicit restricted Hoffman bounds for the Manhattan ($\ell_1$) and Chebyshev ($\ell_\infty$) geometries. By setting the domain and residual norms to $\|\cdot\|_V = \|\cdot\|_W = \|\cdot\|_p$ for $p \in \{1, \infty\}$, we maintain the separability requirement of \cref{prop:hoffman-separable-polyhedral}, ensuring that the polyhedral box and sign constraints do not worsen the conditioning. Thus, the geometry is entirely governed by the induced $\ell_p$ norm of the active KKT inverse $\|M_{\mathcal{A}}^{-1}\|_p$ and we may focus on the affine component.

\subsection{The Chebyshev Geometry (\texorpdfstring{$\ell_\infty$}{l-infinity})}

Evaluating the KKT system in the $\ell_\infty$ norm provides a worst-case sensitivity bound. The $\ell_\infty$ operator norm is defined as the maximum absolute row sum of the matrix. Geometrically, $H^{\infty, \infty}$ measures the maximum perturbation of any single primal parameter or dual subgradient resulting from simultaneous, worst-case perturbations across all constraints.

\begin{corollary}[The $\ell_\infty$ Hoffman constant]
    \label{cor:linf-affine-hoffman}
    The restricted $\ell_\infty$ Hoffman constant $H^{\infty,\infty}_{\mathcal{S}_{\mathcal{A}}}(\Gamma_{\text{aff}, \mathcal{A}})$ evaluates to:
    \begin{equation}
        H^{\infty,\infty}_{\mathcal{S}_{\mathcal{A}}}(\Gamma_{\text{aff}, \mathcal{A}}) = \max \left( \|G_{\mathcal{A}\mathcal{A}}^{-1}\|_\infty, \; \frac{1}{\lambda_n} \left( 1 + \|G_{\mathcal{A}^c\mathcal{A}} G_{\mathcal{A}\mathcal{A}}^{-1}\|_\infty \right) \right).
    \end{equation}
\end{corollary}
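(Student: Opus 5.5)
The plan is to reduce the corollary to an explicit computation of the induced $\ell_\infty$ norm of the block inverse \eqref{eq:inverse-formulation}. Under \cref{assum:general-position,assum:sparse-active-set}, $M_{\mathcal A}$ is invertible. The affine-contribution discussion in \cref{sec:explicit-hc} then gives $H^{\infty,\infty}_{\mathbb R^d}(\Gamma_{\text{aff},\mathcal A}) = \|M_{\mathcal A}^{-1}\|_\infty$, which is an upper bound for the restricted constant. I would therefore first establish the identity
\begin{equation*}
\|M_{\mathcal A}^{-1}\|_\infty = \max\left( \|G_{\mathcal A\mathcal A}^{-1}\|_\infty, \; \frac{1}{\lambda_n}\left(1 + \|G_{\mathcal A^c\mathcal A}G_{\mathcal A\mathcal A}^{-1}\|_\infty\right)\right),
\end{equation*}
and then argue that restricting to $\mathcal S_{\mathcal A}$ does not lower the constant.

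For the identity, I would use that $\|\cdot\|_\infty$ is the maximum absolute row sum, which can be computed block-row by block-row. The top block row of $M_{\mathcal A}^{-1}$ is $(G_{\mathcal A\mathcal A}^{-1},\,0)$, so its maximal row sum is exactly $\|G_{\mathcal A\mathcal A}^{-1}\|_\infty$. In the bottom block row $\bigl(-\tfrac1{\lambda_n}G_{\mathcal A^c\mathcal A}G_{\mathcal A\mathcal A}^{-1},\,\tfrac1{\lambda_n}I\bigr)$, row $j$ has absolute sum $\tfrac1{\lambda_n}\bigl(\sum_k |(G_{\mathcal A^c\mathcal A}G_{\mathcal A\mathcal A}^{-1})_{jk}| + 1\bigr)$. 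This is because the identity block contributes exactly one entry equal to $1$ in each row, and it occupies columns disjoint from the first block. Maximizing over $j$ gives $\tfrac1{\lambda_n}(1+\|G_{\mathcal A^c\mathcal A}G_{\mathcal A\mathcal A}^{-1}\|_\infty)$. In contrast to the $\ell_2$ case of \cref{bound:dimension-free-bound}, no inequality is lost here, because row sums decouple exactly across column blocks.

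The remaining and most delicate step is to justify equality for the restricted constant on $\mathcal K = \mathcal S_{\mathcal A}$, rather than only the upper bound $H_{\mathcal K} \le H_{\mathbb R^d}$. My approach is to interpret the affine system in the free variables $z=(\beta_{\mathcal A},g_{\mathcal A^c})\in\mathbb R^d$. In these coordinates the restriction to the active face imposes no constraint on $z$: any $z$ corresponds to some $\beta \in \mathcal S_{\mathcal A}$ together with a dual vector. Hence the supremum in \eqref{eq:affine-operator-norm} runs over every residual $r = M_{\mathcal A}z - b_{\mathcal A}\in\mathbb R^d$, using surjectivity of $M_{\mathcal A}$. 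Then $\dist_\infty(z,\Gamma_{\text{aff},\mathcal A}) = \|M_{\mathcal A}^{-1}r\|_\infty$, so the ratio attains $\|M_{\mathcal A}^{-1}\|_\infty$. Concretely, take $r$ to be the sign pattern of the maximizing row of $M_{\mathcal A}^{-1}$ and set $z = \bar z + M_{\mathcal A}^{-1}r$.

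If this identification of $\mathcal S_{\mathcal A}$ with all of $z$-space is questioned, I would fall back on stating the result as the induced-norm identity above, which is exactly how \cref{bound:dimension-free-bound} and \cref{bound:dimension-free-lower-bound} were treated, where $H^{2,2}_{\mathcal S_{\mathcal A}}$ was identified with $\|M_{\mathcal A}^{-1}\|_2$.
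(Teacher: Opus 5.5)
Your proposal is correct and follows the paper's proof: the paper also computes the maximum absolute row sum of the explicit block inverse $M_{\mathcal{A}}^{-1}$ block-row by block-row, with the identity block adding exactly $1/\lambda_n$ to each inactive row. Your extra step is something the paper uses implicitly without stating it. That step shows the restriction to $\mathcal{S}_{\mathcal{A}}$ leaves $z=(\beta_{\mathcal{A}},g_{\mathcal{A}^c})$ unconstrained, so the restricted constant equals $\|M_{\mathcal{A}}^{-1}\|_\infty$, with equality attained by the sign-pattern residual.
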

\begin{proof}
    The $\ell_\infty$ operator norm is the maximum absolute row sum. From the explicit block inverse \eqref{eq:inverse-formulation}, the maximum row sum of the active top block is $\|G_{\mathcal{A}\mathcal{A}}^{-1}\|_\infty$. For the inactive bottom block, the identity matrix $I$ adds exactly $1/\lambda_n$ to the absolute sum of every row, yielding a maximum row sum of $\frac{1}{\lambda_n}(1 + \|G_{\mathcal{A}^c\mathcal{A}} G_{\mathcal{A}\mathcal{A}}^{-1}\|_\infty)$. The result follows by taking the maximum of these two block suprema.
\end{proof}

\subsubsection{Mutual Incoherence and Pairwise Incoherence}

The algebraic structure of the inactive block reveals why the MIC is the natural statistical counterpart to the $\ell_\infty$ optimization geometry. If the design matrix satisfies the MIC with parameter $\eta \in (0, 1]$, the interaction term is bounded by $\|G_{\mathcal{A}^c\mathcal{A}} G_{\mathcal{A}\mathcal{A}}^{-1}\|_\infty \le 1 - \eta$. This controls the active-inactive feature interaction.

However, to obtain an $\ell_\infty$ bound explicitly independent of ambient dimension, we must also control the active inverse $\|G_{\mathcal{A}\mathcal{A}}^{-1}\|_\infty$. The MIC does not provide this. Instead, we require an additional assumption to enforce strict diagonal dominance on the active submatrix. In classical statistics, this is achieved via pairwise incoherence (PW)~\citep[Equation 7.12]{wainwright2019highdimensional}.
\begin{definition}[Pairwise incoherence; \citet{wainwright2019highdimensional}]
    A matrix $A$ satisfies PW with constant $\delta_{\text{PW}} \in (0, 1)$ if the coherence between any two columns of the normalized matrix is bounded:
    \begin{equation}
        \delta_{\text{PW}} := \max_{j,k \in [d], j\neq k} \left| \left\langle \frac{A_j}{\sqrt{n}}, \frac{A_k}{\sqrt{n}} \right\rangle \right|.
    \end{equation}
\end{definition}
Geometrically, this means the Gram matrix is a minor perturbation of the identity matrix. Note that pairwise incoherence is exactly the RIP of order 2, i.e., $\delta_2 = \delta_{PW}$; further the following general relationship holds for $2 \le s \le d$~\citep[Equation 7.15]{wainwright2019highdimensional}:
\begin{equation}
    \delta_{\text{PW}} \le \delta_s \le s \delta_{\text{PW}},
\end{equation}
and can be improved to $\delta_s = \sqrt{s}\delta_{\text{PW}}$ in some cases. If the pairwise incoherence constant satisfies $3s \delta_{PW} < 1$, it implies the RNP for $s$-sparse active sets~\citep[Proposition 7.9]{wainwright2019highdimensional}.

Under PW, we have the following $\ell_{\infty}$ Hoffman bound.

\begin{bound}[An $\ell_\infty$ Hoffman bound via incoherence]
    \label{bound:infinity-hoffman-bound-pw}
    Suppose the normalized design matrix $A$ satisfies the MIC with parameter $\eta \in (0, 1]$ and satisfies pairwise incoherence with constant $(s-1)\delta_{\text{PW}} < 1$. Then for any $s$-sparse active set $\mathcal{A}$, the $\ell_\infty$ Hoffman constant is uniformly bounded by:
    \begin{equation}
    H^{\infty, \infty}_{\mathcal{S}_{\mathcal{A}}}(\Gamma_{\text{aff}, \mathcal{A}}) \le \max \left( \frac{1}{1 - (s-1) \delta_{\text{PW}}}, \quad \frac{2 - \eta}{\lambda_n} \right).
    \end{equation}
\end{bound}
\begin{proof}
    We bound the two terms in the $\max$ operator from the preceding corollary separately. First, the MIC explicitly enforces $\| G_{\mathcal A^c\mathcal A}G_{\mathcal A\mathcal A}^{-1} \|_\infty \le 1 -\eta$. Substituting this directly into the inactive row sum immediately yields the upper bound $(2 - \eta)/\lambda_n$.

    Next, to bound $\|G_{\mathcal{A}\mathcal{A}}^{-1}\|_\infty$, we rely on the PW constant $\delta_{PW}$. Because the columns of the design matrix are normalized, the diagonal elements of the Gram matrix are exactly $1$. Thus, the active submatrix can be decomposed as $G_{\mathcal{A}\mathcal{A}} = I + E$, where $E$ contains the off-diagonal correlations. The maximum absolute row sum of $E$ is bounded by the pairwise coherence and the sparsity dimension $s$:
    \begin{equation}
        \|E\|_\infty = \max_{i \in \mathcal{A}} \sum_{j \in \mathcal{A}, j \neq i} |G_{ij}| \le (s-1) \delta_{PW}.
    \end{equation}
    Under the assumption that $(s-1)\delta_{PW} < 1$, the matrix $G_{\mathcal{A}\mathcal{A}}$ is diagonally dominant. This allows us to expand its inverse as a convergent Neumann series $G_{\mathcal{A}\mathcal{A}}^{-1} = \sum_{k=0}^\infty (-1)^k E^k$. Taking the induced $\ell_\infty$ norm of the series and applying the triangle inequality and submultiplicativity yields:
    \begin{equation}
        \|G_{\mathcal{A}\mathcal{A}}^{-1}\|_\infty \le \sum_{k=0}^\infty \|E\|_\infty^k = \frac{1}{1 - \|E\|_\infty} \le \frac{1}{1 - (s-1) \delta_{PW}}.
    \end{equation}
    Taking the maximum of these two bounded components completes the proof.
\end{proof}

The MIC assumption is a weak, standard condition in the literature for many recovery applications. Furthermore, the condition that $(s-1)\delta_{PW} < 1$ has strong precedent. In particular, the literature on basis pursuit establishes that the condition $(2s-1)\delta_{PW} < 1$ is sufficient for exact recovery in noiseless settings~\citep[Equation 1.12]{foucart2013compressivesensing}.

\subsubsection{Restricted Eigenvalue Conditions and Sample Complexity}

However, the PW condition comes at a heavy sample complexity cost. For standard Gaussian and sub-Gaussian ensembles to satisfy $3s\delta_{PW} < 1$ with high probability, a sufficient sample-size scaling is quadratic in sparsity: $n \gtrsim s^2 \log d$~\citep[Exercise 7.7]{wainwright2019highdimensional}. We examine two alternative approaches using $\ell_2$-RE and $\ell_\infty$-RE conditions and discuss the dimensional dependence and sufficient sample-size requirements of the
resulting bounds.

The first approach is to use the $\ell_{2}$-RE condition.

\begin{bound}[An $\ell_\infty$ Hoffman bound via the $\ell_2$-RE condition]
    Suppose the normalized design matrix $A$ satisfies the MIC with parameter $\eta \in (0, 1]$ and the $\ell_2$-RE condition with parameter $\gamma > 0$ over the active set $\mathcal{A}$. Then the restricted $\ell_\infty$ Hoffman constant is bounded by:
    \begin{equation}
        H^{\infty,\infty}_{\mathcal{S}_{\mathcal{A}}}(\Gamma_{\text{aff}, \mathcal{A}}) \le \max\left(\frac{\sqrt{s}}{\gamma}, \quad \frac{2-\eta}{\lambda_n}\right).
    \end{equation}
\end{bound}
\begin{proof}
    By the $\ell_2$-RE condition, $\lambda_{\min}(G_{\mathcal{A}\mathcal{A}}) \ge \gamma$, yielding the spectral bound $\|G_{\mathcal{A}\mathcal{A}}^{-1}\|_2 \le 1/\gamma$. To convert this into an $\ell_\infty$ bound, we apply standard matrix norm equivalence. For an $s \times s$ active matrix, this introduces a penalty factor of $\sqrt{s}$, giving $\|G_{\mathcal{A}\mathcal{A}}^{-1}\|_\infty \le \sqrt{s}\|G_{\mathcal{A}\mathcal{A}}^{-1}\|_2 \le \sqrt{s}/\gamma$. Substituting this active bound and the MIC interaction bound ($\|G_{\mathcal{A}^c\mathcal{A}} G_{\mathcal{A}\mathcal{A}}^{-1}\|_\infty \le 1 -\eta$) into the exact $\ell_\infty$ Hoffman decomposition yields the stated result.
\end{proof}

This approach allows us to drop the pairwise assumption, but the bound deteriorates as sparsity $s$ increases. One sufficient way to avoid the $\sqrt{s}$ factor introduced
by this norm conversion is to impose quantitative diagonal
dominance, as provided by pairwise incoherence with
$(s-1)\delta_{PW}$ bounded away from one.

Instead of bounding the full operator norm of the inverse, a second alternative is to define an $\ell_\infty$ Hoffman constant restricted to the error cone $\mathcal{C}_3(\mathcal{A})$, mirroring the cone-conditional Hoffman regime for the $\ell_2$ norm. To control the maximum element-wise prediction error $\|\hat{\beta} - \beta^*\|_\infty$ without a $\sqrt{s}$ penalty, a stronger geometric condition is required.

\begin{definition}[An $\ell_\infty$-RE condition; \citet{wainwright2019highdimensional}]
    In the case of Gaussian ensembles, the sample covariance matrix $\hat{\Sigma} = \frac{1}{n} A^\top A$ satisfies the $\ell_\infty$-curvature condition restricted to the cone if there exists a parameter $\gamma_\infty > 0$ such that:
    \begin{equation}
        \|\hat{\Sigma}\Delta\|_\infty \ge \gamma_\infty\|\Delta\|_\infty \quad \text{for all } \Delta \in \mathcal{C}_3(\mathcal{A}).
    \end{equation}
\end{definition}
By using this $\ell_\infty$-curvature condition, the stability of the active subspace in the $\ell_\infty$ norm can be directly bounded proportionally to $1/\gamma_\infty$. This condition avoids the explicit $\sqrt{s}$ factor in the preceding norm conversion. The cited sufficient conditions for Gaussian designs use sample sizes of order $n \gtrsim s^2\log d$~\citep[Exercises 7.13 and 7.14]{wainwright2019highdimensional}. This sufficient scaling does not, by itself, establish that quadratic dependence on sparsity is necessary.

\subsection{The Manhattan Geometry (\texorpdfstring{$\ell_1$}{l-1})}

While the $\ell_\infty$ norm controls element-wise error, the $\ell_1$ norm governs the absolute sum of the optimization error. The Hoffman constant in the $\ell_1$ norm corresponds to the maximum absolute column sum of the active inverse.

\begin{bound}[The $\ell_1$ Hoffman constant]
    \label{bound:l1-affine-hoffman}
    The restricted $\ell_1$ Hoffman constant $H^{1,1}_{\mathcal{S}_{\mathcal{A}}}(\Gamma_{\text{aff}, \mathcal{A}})$ is bounded by:
    \begin{equation}
        H^{1,1}_{\mathcal{S}_{\mathcal{A}}}(\Gamma_{\text{aff}, \mathcal{A}}) \le \max\left( \|G_{\mathcal{A}\mathcal{A}}^{-1}\|_1 + \frac{1}{\lambda_n} \|G_{\mathcal{A}^c\mathcal{A}} G_{\mathcal{A}\mathcal{A}}^{-1}\|_1, \quad \frac{1}{\lambda_n} \right).
    \end{equation}
\end{bound}
\begin{proof}
    The $\ell_1$ operator norm is the maximum absolute column sum of $M_{\mathcal A}^{-1}$. From the explicit block inverse \eqref{eq:inverse-formulation}, we partition the columns into the active block and the inactive block.
    
    For the inactive columns (the right half of the block matrix), the top block is zero, and the bottom block is $\frac{1}{\lambda_n} I$. The absolute sum of any inactive column is exactly $\frac{1}{\lambda_n}$.
    
    For the active columns $k \in \mathcal{A}$ (the left half), the absolute sum of the $k$-th column is the sum of its active primal block and its inactive dual block:
    \begin{equation}
        \|(G_{\mathcal{A}\mathcal{A}}^{-1})_{:,k}\|_1 + \frac{1}{\lambda_n} \|(G_{\mathcal{A}^c\mathcal{A}} G_{\mathcal{A}\mathcal{A}}^{-1})_{:,k}\|_1.
    \end{equation}
    Taking the maximum over all columns $k \in \mathcal{A}$, and applying the subadditivity of the maximum operator ($\max_k (a_k + b_k) \le \max_k a_k + \max_k b_k$), we obtain the upper bound for the active block's columns. Taking the maximum of the active and inactive column bounds yields the stated result.
\end{proof}

\subsubsection{Active Block \texorpdfstring{$\ell_1$}{l-1}-Stability via \texorpdfstring{$\ell_2$}{l-2}-RE}

The Euclidean RE condition also yields an upper bound on the
$\ell_1$ operator norm of the active inverse, with a factor
depending on the support size.

\begin{lemma}[Active block $\ell_1$-stability via the $\ell_2$-RE condition]
\label{lem:l1-stability-active}
Suppose the design matrix satisfies the $\ell_2$-RE condition over a nonempty active set $\mathcal{A}$ with curvature parameter $\gamma>0$ and cone parameter $\alpha\geq 0$. Writing $s=|\mathcal{A}|$, we have
\begin{equation}
    \|G_{\mathcal{A}\mathcal{A}}^{-1}\|_1
    \leq \frac{\sqrt{s}}{\gamma}.
\end{equation}
\end{lemma}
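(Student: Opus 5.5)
The plan is to reduce the claim to the spectral curvature bound already established in \cref{prop:re-active-eigenvalue}, and then convert from the spectral norm to the induced $\ell_1$ norm at a cost of $\sqrt{s}$. This mirrors the argument used for the $\ell_\infty$ bound via the $\ell_2$-RE condition.

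\textbf{Step 1: spectral bound.} By \cref{prop:re-active-eigenvalue}, the RE condition over $\mathcal{A}$ with parameters $(\gamma,\alpha)$ gives $\lambda_{\min}(G_{\mathcal{A}\mathcal{A}}) \ge \gamma > 0$. This uses only the fact that $\mathcal{S}_{\mathcal{A}} \subseteq \mathcal{C}_\alpha(\mathcal{A})$ for every $\alpha \ge 0$. Hence $G_{\mathcal{A}\mathcal{A}}$ is symmetric positive definite and $\|G_{\mathcal{A}\mathcal{A}}^{-1}\|_2 \le 1/\gamma$.

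\textbf{Step 2: column-wise conversion.} The induced $\ell_1$ norm is the maximum absolute column sum, so it suffices to bound each column of $G_{\mathcal{A}\mathcal{A}}^{-1}$. For a column $k \in \mathcal{A}$, write it as $c_k = G_{\mathcal{A}\mathcal{A}}^{-1} e_k \in \mathbb{R}^s$. Cauchy--Schwarz gives $\|c_k\|_1 \le \sqrt{s}\,\|c_k\|_2$, and then
\begin{equation*}
\|c_k\|_2 \le \|G_{\mathcal{A}\mathcal{A}}^{-1}\|_2 \|e_k\|_2 \le \frac{1}{\gamma}.
\end{equation*}
Taking the maximum over $k$ yields $\|G_{\mathcal{A}\mathcal{A}}^{-1}\|_1 \le \sqrt{s}/\gamma$.

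As an alternative route, one could use symmetry: $\|G_{\mathcal{A}\mathcal{A}}^{-1}\|_1 = \|G_{\mathcal{A}\mathcal{A}}^{-1}\|_\infty$, and then cite the $\sqrt{s}$ norm-equivalence step from the earlier $\ell_\infty$ bound. I prefer the direct column argument because it is self-contained.

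\textbf{Main obstacle.} There is no substantive obstacle. The only point needing care is to use the sharp conversion $\|B\|_1 \le \sqrt{s}\|B\|_2$ for $s\times s$ matrices, which the column-wise argument supplies. The cruder $\|B\|_1 \le s\|B\|_{\max}$ route would lose a factor.
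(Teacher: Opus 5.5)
Your proof is correct and matches the paper's argument. Both derive $\|G_{\mathcal{A}\mathcal{A}}^{-1}\|_2 \le 1/\gamma$ by zero-extending active vectors into the cone, then pay a factor $\sqrt{s}$ in the $\ell_1$--$\ell_2$ conversion. The only cosmetic difference is that you evaluate on the basis vectors $e_k$ (column sums), whereas the paper bounds $\|G_{\mathcal{A}\mathcal{A}}^{-1}u\|_1$ for general $u$ and uses $\|u\|_2 \le \|u\|_1$.
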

\begin{proof}
For any $v\in\mathbb{R}^{s}$, its extension by zeros outside
$\mathcal{A}$ lies in $\mathcal{C}_{\alpha}(\mathcal{A})$.
The RE condition therefore gives
\begin{equation*}
    v^\top G_{\mathcal{A}\mathcal{A}}v
    \geq \gamma\|v\|_2^2.
\end{equation*}
Consequently, $G_{\mathcal{A}\mathcal{A}}$ is positive definite and
\begin{equation*}
     \|G_{\mathcal{A}\mathcal{A}}^{-1}\|_2
    \leq \frac{1}{\gamma}.
\end{equation*}
For any $u\in\mathbb{R}^{s}$,
\begin{equation}
    \|G_{\mathcal{A}\mathcal{A}}^{-1}u\|_1
    \leq \sqrt{s}\,
        \|G_{\mathcal{A}\mathcal{A}}^{-1}u\|_2
    \leq \frac{\sqrt{s}}{\gamma}\|u\|_2
    \leq \frac{\sqrt{s}}{\gamma}\|u\|_1.
\end{equation}
Taking the supremum over nonzero $u$ proves the claim.
\end{proof}

Applying \cref{lem:l1-stability-active} to our $\ell_1$ Hoffman bound reveals two potential sources of dimensional dependence. First, the active-inverse bound retains a factor of $\sqrt{s}$, which is relatively mild. Second, bounding the interaction block in the $\ell_1$ norm ($\|G_{\mathcal A^c\mathcal A}G_{\mathcal A\mathcal A}^{-1}\|_1$) imposes a different statistical requirement than the MIC. Rather than bounding the total correlation of an inactive feature across all active features (row sum), the $\ell_1$ norm bounds the total correlation of a single active feature across \textit{all} inactive features simultaneously (column sum). Because the interaction block is a $(d-s) \times s$ matrix, summing down the columns accumulates the correlations over all $d-s$ inactive variables. These column sums can grow with the number of inactive features (and thus implicitly with the ambient dimension) when their contributions do not decay sufficiently.

\section{Applications to Other Sparse Polyhedral Regularizers}
\label{sec:application-other}

So far the focus has been on the LASSO. However, the prototypical decomposition described in \cref{sec:explicit-hc} can apply to a variety of sparse regularization problems. We now demonstrate how certain algorithm designs like the addition of an $\ell_2$ regularization penalty in the elastic net, the addition of an adaptive weight scheme in the adaptive LASSO, or the combination of the two in the adaptive elastic net influence or control different aspects of the Hoffman bound~\citep{zou2005elasticnet, zou2006adaptivelasso, zou2009adaptiveelasticnet}. This is significant as one can see these architectures as being motivated by the optimization geometry of the space, which can lead to further algorithmic improvements in the future.

\subsection{Elastic Net as a Spectral Smoother}

The elastic net regularizer introduces an $\ell_2$ regularization term to the LASSO.

\begin{problem}[The elastic net estimator; \citet{zou2005elasticnet}]
    \begin{equation}
    \hat{\beta} = \uargmin_{\beta \in \mathbb{R}^d} \frac{1}{2n} \|A\beta - y\|_2^2 + \lambda_{n,1} \|\beta\|_1 + \frac{\lambda_{n, 2}}{2} \|\beta\|_2^{2},
    \end{equation}
\end{problem}

Classically, the elastic net can be formulated as an augmented LASSO problem. We define the augmented design matrix $\tilde{A} \in \mathbb{R}^{(n+d) \times d}$ and target $\tilde{y} \in \mathbb{R}^{n+d}$ as:
\begin{equation}
    \tilde{A} = \begin{pmatrix} A \\ \sqrt{n \lambda_{n,2}} I_d \end{pmatrix}, \quad \tilde{y} = \begin{pmatrix} y \\ 0_d \end{pmatrix}.
\end{equation}
Solving the standard LASSO objective $\frac{1}{2n}\|\tilde{A}\beta - \tilde{y}\|_2^2 + \lambda_{n,1}\|\beta\|_1$ on this augmented dataset recovers the elastic net objective.

Because the $\ell_2$ penalty is smooth, it is absorbed into the gradient of the differentiable part of the objective:
\begin{equation}
    \nabla f(\beta) = \frac{1}{n} A^\top (A\beta - y) + \lambda_{n,2}\beta
\end{equation}
For a fixed active set $\mathcal{A}$, setting the gradient plus the $\ell_1$ subdifferential to zero yields the modified KKT equality:
\begin{equation}
    (G + \lambda_{n,2} I) \beta - u + \lambda_{n,1} g = 0.
\end{equation}
By partitioning this system into active  and inactive blocks, the local affine KKT system takes the form:
\begin{equation}
    \underbrace{\begin{pmatrix}
    G_{\mathcal{A}\mathcal{A}} + \lambda_{n,2} I & 0 \\
    G_{\mathcal{A}^c\mathcal{A}} & \lambda_{n,1} I
    \end{pmatrix}}_{:= M_{\mathcal{A}}^{\text{EN}}}
    \begin{pmatrix}
    \beta_{\mathcal{A}} \\
    g_{\mathcal{A}^c}
    \end{pmatrix}
    =
    \begin{pmatrix}
    u_{\mathcal{A}} - \lambda_{n,1} g_{\mathcal{A}} \\
    u_{\mathcal{A}^c}
    \end{pmatrix},
\end{equation}
which is the same as the LASSO with a perturbation to the top-left active block.

The $\ell_2$ penalty inflates the diagonal of the active Gram submatrix by $\lambda_{n,2}$. Because the normalized Gram matrix is symmetric positive semi-definite, an application of Weyl's inequality demonstrates that the minimum eigenvalue of the modified active block is bounded away from zero: 
\begin{equation}
    \label{eq:elastic-net-eigenvalue-chain}
    \lambda_{\min}(G_{\mathcal{A}\mathcal{A}} + \lambda_{n,2} I) \ge \lambda_{\min}(G_{\mathcal{A}\mathcal{A}}) + \lambda_{n,2} \ge \lambda_{n,2}.
\end{equation}
This artificial active curvature means that the restricted KKT matrix is invertible for \textit{any} active set, removing the need for the general position assumption or some RNP-like condition. Substituting this modified block inverse into our framework yields a bound on the Hoffman constant.
\begin{bound}[Spectral Hoffman bound for the elastic net]
    \label{bound:elastic-net}
    For any active set $\mathcal{A}$, including nonsparse active sets, the restricted $\ell_2$ Hoffman constant for the elastic net KKT system is bounded by:
    \begin{equation}
        H^{2, 2}_{\mathcal{S}_{\mathcal{A}}}(\Gamma_{\text{aff}, \mathcal{A}}^{\text{EN}}) \le \sqrt{ \frac{1}{\lambda_{n,2}^2} + \frac{1}{\lambda_{n,1}^2} \left( 1 + \frac{\|G_{\mathcal A^c\mathcal A}\|_2^2}{\lambda_{n,2}^2} \right) }.
    \end{equation}
\end{bound}
\begin{proof}
    Applying the dimension-free spectral bound from \cref{bound:dimension-free-bound} to the elastic net KKT matrix $M_{\mathcal{A}}^{\text{EN}}$ yields:
    \begin{equation}
        H^{2, 2}_{\mathcal{S}_{\mathcal{A}}}(\Gamma_{\text{aff}, \mathcal{A}}^{\text{EN}}) \le \sqrt{ \|(G_{\mathcal{A}\mathcal{A}} + \lambda_{n,2} I)^{-1}\|_2^2 + \frac{1}{\lambda_{n,1}^2} \left( 1 + \|G_{\mathcal A^c\mathcal A} (G_{\mathcal{A}\mathcal{A}} + \lambda_{n,2} I)^{-1}\|_2^2 \right) }.
    \end{equation}
    Because $\lambda_{\min}(G_{\mathcal{A}\mathcal{A}} + \lambda_{n,2} I) \ge \lambda_{n,2}$, the spectral norm of the inverse is bounded by $\|(G_{\mathcal{A}\mathcal{A}} + \lambda_{n,2} I)^{-1}\|_2 \le 1/\lambda_{n,2}$. Applying matrix submultiplicativity to the interaction term and substituting these components yields the stated bound.
\end{proof}
Of course, under some further RE condition, the bound for sparse active sets can be improved by using the lower bound on the eigenvalue of the active Gram matrix as in \eqref{eq:elastic-net-eigenvalue-chain}. 

\subsubsection{A Geometric View on Hyperparameter Selection}
\label{subsubsec:elastic-net-hyperparam-selection}

Based on the optimization-geometric perspective developed so far, we present another hyperparameter selection scheme.

One approach to selecting optimal hyperparameters is to minimize the variational bound on the Hoffman constant for a fixed regularization budget $\Lambda_n = \lambda_{n,1} + \lambda_{n,2}$. Since minimizing \cref{bound:elastic-net} is equivalent to minimizing its square, we can formulate this as:
\begin{problem}[Hyperparameter selection for the elastic net]
    \label{prob:direct-hyperparam-elastic-net}
    \begin{align}
    \min_{\lambda_{n,1},\lambda_{n,2}} \quad & \frac{1}{\lambda_{n,2}^2} + \frac{1}{\lambda_{n,1}^2} \left( 1 + \frac{\|G_{\mathcal{A}^c\mathcal{A}}\|_2^2}{\lambda_{n,2}^2} \right) \nonumber \\
    \text{subject to} \quad & \lambda_{n,1} + \lambda_{n,2} = \Lambda_n, \quad \lambda_{n,1}, \lambda_{n,2} > 0.
    \end{align}
\end{problem}
We instead consider a maximum-envelope surrogate that balances the active-curvature and inactive-interaction contributions. This alternative criterion admits an explicit characterization of its minimizer and provides an interpretable hyperparameter selection rule. The optimality statements below concern this envelope rather than the original objective in \cref{prob:direct-hyperparam-elastic-net}. Writing $U$ for the square root of that objective, the envelope $E$ defined below satisfies $(1/\sqrt{2})U \leq E \leq \sqrt{2}U$. Thus, the envelope approximates the original Hoffman upper bound within a uniform multiplicative factor.

We define the surrogate envelope $\mathcal{E}(\lambda_{n,1}, \lambda_{n,2})$ for the worst-case spectral bound in the following problem:
\begin{equation}
    \mathcal{E}(\lambda_{n,1}, \lambda_{n,2}) := \max \left( \frac{1}{\lambda_{n,2}}, \quad \frac{1}{\lambda_{n,1}} \left( 1 + \frac{\|G_{\mathcal{A}^c \mathcal{A}}\|_2}{\lambda_{n,2}} \right) \right),
\end{equation}
which can be used as the surrogate objective to minimize in \cref{prob:direct-hyperparam-elastic-net}.

\begin{lemma}[Naive optimal geometric stability of the relaxed envelope]
    \label{lem:naive-optimal-elasticnet}
    Consider a fixed total regularization budget $\Lambda_n = \lambda_{n,1} + \lambda_{n,2}$. In the regime of weak active-inactive interaction ($\|G_{\mathcal{A}^c \mathcal{A}}\|_2 \le \Lambda_n/3$), the maximum-envelope surrogate $\mathcal{E}(\lambda_{n,1}, \lambda_{n,2})$ is minimized exactly when the hyperparameters satisfy the balance condition:
    \begin{equation}
        \lambda_{n,1}^* - \lambda_{n,2}^* = \|G_{\mathcal{A}^c \mathcal{A}}\|_2.
    \end{equation}
    Furthermore, for large regularization budgets where $\Lambda_n \gg \|G_{\mathcal{A}^c \mathcal{A}}\|_2$, this geometry simplifies to the equipartition rule $\lambda_{n,1}^* \approx \lambda_{n,2}^* \approx \Lambda_n/2$.
\end{lemma}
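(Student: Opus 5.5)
The plan is to reduce the constrained problem to one variable and study the two branches of the maximum separately. Write $g := \|G_{\mathcal{A}^c \mathcal{A}}\|_2$ and eliminate $\lambda_{n,1} = \Lambda_n - \lambda_{n,2}$, so $\lambda_{n,2}$ ranges over $(0,\Lambda_n)$. The envelope becomes $\mathcal{E} = \max(f_1, f_2)$ with
\begin{equation*}
    f_1(\lambda_{n,2}) = \frac{1}{\lambda_{n,2}}, \qquad f_2(\lambda_{n,2}) = \frac{\lambda_{n,2} + g}{(\Lambda_n - \lambda_{n,2})\lambda_{n,2}}.
\end{equation*}
The first branch $f_1$ is strictly decreasing. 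The second branch $f_2$ blows up at both endpoints of the interval. The minimizer of the maximum should therefore sit at the crossing point, provided $f_2$ is already increasing there.

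First, locate the crossing. Setting $f_1 = f_2$ and clearing the common factor $1/\lambda_{n,2}$ gives $\Lambda_n - \lambda_{n,2} = \lambda_{n,2} + g$. This is exactly $\lambda_{n,1} - \lambda_{n,2} = g$, with crossing point $\lambda_{n,2}^\circ = (\Lambda_n - g)/2$. This point is admissible since the hypothesis $g \le \Lambda_n/3$ gives $g < \Lambda_n$. The same algebra shows $f_1 > f_2$ exactly when $\lambda_{n,2} < \lambda_{n,2}^\circ$. Hence $\mathcal{E} = f_1$ on $(0,\lambda_{n,2}^\circ]$, where it is strictly decreasing, and $\mathcal{E} = f_2$ on $[\lambda_{n,2}^\circ, \Lambda_n)$.

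The main step, and the only place the hypothesis $g \le \Lambda_n/3$ enters, is to show that $f_2$ is nondecreasing on $[\lambda_{n,2}^\circ,\Lambda_n)$. Differentiating, the sign of $f_2'$ is the sign of
\begin{equation*}
    \lambda_{n,2}^2 + 2g\lambda_{n,2} - g\Lambda_n,
\end{equation*}
so $f_2$ is increasing for $\lambda_{n,2} \ge \sqrt{g^2 + g\Lambda_n} - g$. It therefore suffices that $(\Lambda_n - g)/2 \ge \sqrt{g^2+g\Lambda_n} - g$, i.e.\ $(\Lambda_n+g)^2 \ge 4g(\Lambda_n+g)$. This is equivalent to $\Lambda_n \ge 3g$, exactly the weak-interaction assumption.

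Combining the pieces, $\mathcal{E}$ is strictly decreasing up to $\lambda_{n,2}^\circ$ and nondecreasing afterwards, so it attains its minimum at the crossing. The minimizer is unique: $f_1$ is strict on the left, and on the right $f_2' > 0$ except possibly at the single point where equality holds in the boundary case $\Lambda_n = 3g$. This proves the balance condition $\lambda_{n,1}^* - \lambda_{n,2}^* = g$. The equipartition remark follows by reading off $\lambda_{n,2}^* = (\Lambda_n - g)/2$ and $\lambda_{n,1}^* = (\Lambda_n+g)/2$. Both equal $\Lambda_n/2\,(1 + O(g/\Lambda_n))$, so both tend to $\Lambda_n/2$ when $\Lambda_n \gg g$.
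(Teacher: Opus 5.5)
Your proof is correct and follows essentially the same route as the paper's. Both arguments reduce to the single variable $\lambda_{n,2}$, note that the first branch is strictly decreasing while the second is U-shaped, use $\|G_{\mathcal{A}^c\mathcal{A}}\|_2 \le \Lambda_n/3$ to place the vertex of the second branch so that the minimum sits at the crossing, and then solve the balance condition together with the budget. The one difference is that you carry out the vertex comparison explicitly in the $\lambda_{n,2}$ coordinate, correctly finding the vertex at or to the left of the crossing; the paper defers this step to the $\lambda_{n,1}$-parametrized computation in the proof of \cref{thm:statistically-constrained-optimum}. Its appendix wording, which places the vertex at a larger $\lambda_{n,2}$, has the direction reversed for that coordinate.
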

\begin{proof}
    See \cref{app:proof-naive-optimal-elasticnet}.
\end{proof}

If we apply this same surrogate minimization to the tighter bound that explicitly includes the native active curvature $\lambda_{\min}(G_{\mathcal{A}\mathcal{A}})$ from \eqref{eq:elastic-net-eigenvalue-chain}, the modified envelope becomes:
\begin{equation}
    \mathcal{E}_{\text{tight}}(\lambda_{n,1}, \lambda_{n,2}) := \max \left( \frac{1}{\lambda_{n,2} + \lambda_{\min}(G_{\mathcal{A}\mathcal{A}})}, \quad \frac{1}{\lambda_{n,1}} \left( 1 + \frac{\|G_{\mathcal{A}^c \mathcal{A}}\|_2}{\lambda_{n,2} + \lambda_{\min}(G_{\mathcal{A}\mathcal{A}})} \right) \right).
\end{equation}
Minimizing this tighter surrogate shifts the optimal balance intersection to:
\begin{equation}
    \lambda_{n,1}^* - \lambda_{n,2}^* =  \lambda_{\min}(G_{\mathcal{A}\mathcal{A}}) + \|G_{\mathcal{A}^c \mathcal{A}}\|_2.
\end{equation}
This equation demonstrates that the intrinsic curvature of the active data $\lambda_{\min}(G_{\mathcal{A}\mathcal{A}})$ subsidizes the geometric requirement for the artificial $\ell_2$ penalty from $\lambda_{n,2}$. If the active features possess curvature, less $\ell_2$ regularization is required to optimize the Hoffman constant, sparing the estimator from unnecessary shrinkage bias. The elastic net must thus inject exactly enough artificial curvature to compensate for the data's specific eigenvalue deficit relative to its correlation leakage. We can bound the approximation error between the surrogate bound with and without considering the intrinsic curvature. Notably, this error decays quadratically with the regularization budget $\Lambda_n$.

\begin{lemma}[Naive approximation gap of the relaxed envelope]
    \label{lem:naive-envelope-gap}
    Let $\mathcal{E}(\lambda_{n,1}, \lambda_{n,2})$ be the worst-case surrogate envelope and $\mathcal{E}_{\text{tight}}(\lambda_{n,1}, \lambda_{n,2})$ be the tight surrogate that includes the native active curvature $\lambda_{\min}(G_{\mathcal{A}\mathcal{A}})$. For any fixed hyperparameters $\lambda_{n,1}, \lambda_{n,2} > 0$, the pointwise approximation gap is bounded by:
    \begin{equation}
        0 \le \mathcal{E}(\lambda_{n,1}, \lambda_{n,2}) - \mathcal{E}_{\text{tight}}(\lambda_{n,1}, \lambda_{n,2}) \le \frac{\lambda_{\min}(G_{\mathcal{A}\mathcal{A}})}{\lambda_{n,2}(\lambda_{n,2} + \lambda_{\min}(G_{\mathcal{A}\mathcal{A}}))} \max \left( 1, \frac{\|G_{\mathcal{A}^c \mathcal{A}}\|_2}{\lambda_{n,1}} \right).
    \end{equation}
    Furthermore, if $\|G_{\mathcal{A}^c \mathcal{A}}\|_2 \le \Lambda_n/3$ and $\Lambda_n > \|G_{\mathcal{A}^c \mathcal{A}}\|_2 + \lambda_{\min}(G_{\mathcal{A} \mathcal{A}})$, the exact difference between their minimized optimal values evaluates to:
    \begin{equation}
        \mathcal{E}^* - \mathcal{E}_{\text{tight}}^* = \frac{2\lambda_{\min}(G_{\mathcal{A}\mathcal{A}})}{(\Lambda_n - \|G_{\mathcal{A}^c \mathcal{A}}\|_2)(\Lambda_n - \|G_{\mathcal{A}^c \mathcal{A}}\|_2 + \lambda_{\min}(G_{\mathcal{A}\mathcal{A}}))}.
    \end{equation}
\end{lemma}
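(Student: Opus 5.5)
The plan is to fix the shorthand $c := \|G_{\mathcal{A}^c\mathcal{A}}\|_2$ and $\mu := \lambda_{\min}(G_{\mathcal{A}\mathcal{A}}) \ge 0$, and treat the two claims separately.

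For the pointwise claim, write $\mathcal{E} = \max(a_1, a_2)$ and $\mathcal{E}_{\text{tight}} = \max(b_1, b_2)$, where
\begin{equation*}
a_1 = \frac{1}{\lambda_{n,2}}, \quad a_2 = \frac{1}{\lambda_{n,1}}\left(1 + \frac{c}{\lambda_{n,2}}\right), \quad b_1 = \frac{1}{\lambda_{n,2}+\mu}, \quad b_2 = \frac{1}{\lambda_{n,1}}\left(1 + \frac{c}{\lambda_{n,2}+\mu}\right).
\end{equation*}
Since $t \mapsto 1/t$ is decreasing, $b_i \le a_i$ for each $i$, and therefore $\mathcal{E}_{\text{tight}} \le \mathcal{E}$; this gives the lower bound $0$. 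For the upper bound I use the elementary fact that $0 \le \max(a_1,a_2) - \max(b_1,b_2) \le \max(a_1-b_1, a_2-b_2)$ whenever $b_i \le a_i$. The first branch difference is $\mu/(\lambda_{n,2}(\lambda_{n,2}+\mu))$. The second is $(c/\lambda_{n,1})$ times the same quantity. Factoring the common term out of the maximum yields the stated bound.

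For the optimal values, the key step is a change of variables that reduces the tight envelope to the naive one. Setting $t := \lambda_{n,2} + \mu$ gives $\mathcal{E}_{\text{tight}}(\lambda_{n,1}, \lambda_{n,2}) = \mathcal{E}(\lambda_{n,1}, t)$, subject to $\lambda_{n,1} + t = \Lambda_n + \mu$ and $t > \mu$. Thus minimizing $\mathcal{E}_{\text{tight}}$ at budget $\Lambda_n$ is the same as minimizing $\mathcal{E}$ at the shifted budget $\Lambda_n + \mu$, with an extra constraint $t > \mu$.

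Since $c \le \Lambda_n/3 \le (\Lambda_n+\mu)/3$, \cref{lem:naive-optimal-elasticnet} applies at both budgets. At budget $\Lambda_n$ its balance condition $\lambda_{n,1} - \lambda_{n,2} = c$ gives $\lambda_{n,2}^* = (\Lambda_n - c)/2$. At that point both branches coincide, so $\mathcal{E}^* = 2/(\Lambda_n - c)$. At budget $\Lambda_n + \mu$, the unconstrained minimizer is $t^* = (\Lambda_n + \mu - c)/2$, which recovers the shifted balance $\lambda_{n,1}^* - \lambda_{n,2}^* = \mu + c$ quoted before the lemma. The hypothesis $\Lambda_n > c + \mu$ is exactly what makes $t^* > \mu$, equivalently $\lambda_{n,2}^* > 0$. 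So the constraint is inactive, and $\mathcal{E}_{\text{tight}}^* = 2/(\Lambda_n - c + \mu)$. Subtracting gives
\begin{equation*}
\mathcal{E}^* - \mathcal{E}_{\text{tight}}^* = \frac{2}{\Lambda_n - c} - \frac{2}{\Lambda_n - c + \mu} = \frac{2\mu}{(\Lambda_n - c)(\Lambda_n - c + \mu)},
\end{equation*}
which is the claimed identity.

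The main obstacle is ensuring that \cref{lem:naive-optimal-elasticnet} genuinely delivers the global minimizer on the restricted interval $t \in (\mu, \Lambda_n + \mu)$, rather than just a stationary point. The argument I would rely on is the monotone-crossing structure that the appendix proof presumably uses. Along the budget line, the first branch $1/t$ is strictly decreasing in $t$. The second branch can be written as $(t + c)/\bigl((\Lambda_n + \mu - t)\,t\bigr)$, and I would check directly, using $c \le (\Lambda_n + \mu)/3$, that it is increasing on the relevant range. The maximum of a decreasing and an increasing function is then minimized at their unique crossing. Because that crossing lies inside $(\mu, \Lambda_n + \mu)$, the restriction does not change the minimum.
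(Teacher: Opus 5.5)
Your proposal is correct and follows the paper's proof. The pointwise bound comes from the nonexpansiveness of the maximum, and the optimal gap from evaluating each envelope at its balance point, $\lambda_{n,2}=(\Lambda_n-c)/2$ and $(\Lambda_n-c-\mu)/2$ respectively. Your substitution $t=\lambda_{n,2}+\mu$ justifies the paper's ``similarly'' step more cleanly, and it shows that $\Lambda_n>c+\mu$ is exactly the interiority condition.

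One small correction to your last paragraph: the second branch $(t+c)/((\Lambda_n+\mu-t)t)$ is convex and U-shaped, not increasing on all of $(\mu,\Lambda_n+\mu)$. Its vertex $\sqrt{c(\Lambda_n+\mu+c)}-c$ can exceed $\mu$. The crossing argument should instead use two facts: $1/t$ dominates to the left of the crossing, and the vertex lies to the left of the crossing precisely when $c\le(\Lambda_n+\mu)/3$.
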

\begin{proof}
    See \cref{app:proof-naive-envelope-gap}.
\end{proof}

For the envelope in the weak-interaction regime, the result that maximum convergence speed occurs when $\lambda_{n,1} \approx \lambda_{n,2}$ is a strong, counter-intuitive theoretical claim, as practitioners often treat $\lambda_{n,2}$ as a small nuisance parameter to make the Gram matrix $G$ invertible. In the case of low interaction, optimal geometric stability is actually achieved at the balance of the two.

However, while \cref{lem:naive-optimal-elasticnet} establishes an unconstrained optimum, it ignores the \textit{statistical validity} of the resulting estimator---it is in this sense that we call these naive results. In practice, the $\ell_1$ penalty cannot be chosen arbitrarily and must be sufficiently large relative to the projected-noise scale; exact support recovery additionally requires active sign consistency and strict dual feasibility. As established in \citet[Theorem 7.21]{wainwright2019highdimensional}, for the KKT system to avoid false positive selections, the $\ell_1$ parameter must dominate the projected noise. Let $\lambda_{\text{stat}}$ denote this strict statistical lower bound, such that we require $\lambda_{n,1} \ge \lambda_{\text{stat}} \gtrsim \|\frac{1}{n}A^\top\epsilon\|_\infty$, for example.~\footnote{Although the value $\epsilon$ is not available in practice, 
surrogates using the high-dimensional central limit theorem or bootstrap are available~\citep{chernozhukov2023high}.}

We now formulate the statistically constrained geometric optimization problem, which is of the same form as \cref{prob:direct-hyperparam-elastic-net} subject to the baseline recovery guarantee:
\begin{problem}[Hyperparameter selection for the elastic net under statistical validity constraints]
    \label{prob:direct-hyperparam-elastic-net-stat-val}
    \begin{align}
    \min_{\lambda_{n,1},\lambda_{n,2}} \quad & \frac{1}{\lambda_{n,2}^2} + \frac{1}{\lambda_{n,1}^2} \left( 1 + \frac{\|G_{\mathcal{A}^c\mathcal{A}}\|_2^2}{\lambda_{n,2}^2} \right) \nonumber \\
    \text{subject to} \quad &\lambda_{n,1}+\lambda_{n,2}=\Lambda_n,\quad\lambda_{n,1}\ge\lambda_{\text{stat}}, \quad \lambda_{n,1}, \lambda_{n,2} > 0.
    \end{align}
\end{problem}
Again, maximum-envelope formulations can be used as surrogates.

\begin{theorem}[Statistically constrained geometric optimum of the relaxed envelope]
    \label{thm:statistically-constrained-optimum}
    Consider a fixed total regularization budget $\Lambda_n=\lambda_{n,1}+\lambda_{n,2}$ and a statistical lower bound $\lambda_{\text{stat}}$ with $0 < \lambda_{\text{stat}} < \Lambda_n$. The surrogate envelope formulation of \cref{prob:direct-hyperparam-elastic-net-stat-val} admits a closed-form solution. The unconstrained geometric optimum $\lambda_{n,1}^{\text{unc}}$ exhibits a phase transition dependent on the spectral leakage $\|G_{\mathcal{A}^c\mathcal{A}}\|_2$:
    \begin{enumerate}
    \item \textbf{Weak interaction regime} ($\|G_{\mathcal{A}^c\mathcal{A}}\|_2\le\Lambda_n/3$):
    \begin{equation}
        \lambda_{n,1}^{\text{unc}}=\frac{\Lambda_n+\|G_{\mathcal{A}^c\mathcal{A}}\|_2}{2}.
    \end{equation}
    \item \textbf{Strong interaction regime} ($\|G_{\mathcal{A}^c\mathcal{A}}\|_2>\Lambda_n/3$):
    \begin{equation}
        \lambda_{n,1}^{\text{unc}}=\Lambda_n+\|G_{\mathcal{A}^c\mathcal{A}}\|_2-\sqrt{\|G_{\mathcal{A}^c\mathcal{A}}\|_2(\Lambda_n+\|G_{\mathcal{A}^c\mathcal{A}}\|_2)}.
    \end{equation}
    \end{enumerate}
    The hyperparameters that minimize the relaxed envelope are thus
    \begin{equation}
        \lambda_{n,1}^*=\max\left(\lambda_{n,1}^{\text{unc}},\lambda_{\text{stat}}\right), \quad \lambda_{n,2}^*=\Lambda_n-\lambda_{n,1}^*
    \end{equation}
\end{theorem}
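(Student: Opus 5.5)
We parametrize everything by the single variable $x := \lambda_{n,1} \in (0,\Lambda_n)$, so that $\lambda_{n,2} = \Lambda_n - x$. We also abbreviate $g := \|G_{\mathcal{A}^c\mathcal{A}}\|_2$. The relaxed envelope becomes $\mathcal{E}(x) = \max(f_1(x), f_2(x))$, where
\begin{equation*}
    f_1(x) = \frac{1}{\Lambda_n - x}, \qquad f_2(x) = \frac{1}{x}\left(1 + \frac{g}{\Lambda_n - x}\right) = \frac{\Lambda_n + g - x}{x(\Lambda_n - x)}.
\end{equation*}
The plan is to understand the shape of each branch, locate the minimizer of their maximum on $(0,\Lambda_n)$, and then impose the constraint $x \ge \lambda_{\text{stat}}$ by a unimodality argument.

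\textbf{Shape of the two branches.} First, $f_1$ is strictly increasing on $(0,\Lambda_n)$. Second, $f_2 \to \infty$ at both endpoints. Differentiating $f_2$ and clearing the positive denominator, the stationarity condition is the quadratic
\begin{equation*}
    x^2 - 2(\Lambda_n + g)x + \Lambda_n(\Lambda_n + g) = 0.
\end{equation*}
Its only root in $(0,\Lambda_n)$ is
\begin{equation*}
    x_2 = \Lambda_n + g - \sqrt{g(\Lambda_n + g)}.
\end{equation*}
To confirm that $x_2$ lies in $(0,\Lambda_n)$: we have $x_2 < \Lambda_n$ because $\sqrt{g(\Lambda_n+g)} > g$, and $x_2 > 0$ because $(\Lambda_n+g)^2 > g(\Lambda_n+g)$. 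Consequently $f_2$ is strictly decreasing on $(0,x_2]$ and strictly increasing on $[x_2,\Lambda_n)$.

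\textbf{Crossing point.} Setting $f_1 = f_2$ gives $\Lambda_n - x = \Lambda_n + g - x - \ldots$; more precisely, it reduces to $x = \Lambda_n - x + g$, i.e. the balance condition $\lambda_{n,1} - \lambda_{n,2} = g$. So the unique crossing is
\begin{equation*}
    x_c = \frac{\Lambda_n + g}{2}.
\end{equation*}
We have $f_2 > f_1$ to the left of $x_c$ and $f_1 > f_2$ to the right.

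\textbf{Minimizer of the maximum.} There are two cases.
\begin{enumerate}
    \item If $x_c \le x_2$, then on $(0,x_c]$ the envelope equals the decreasing branch $f_2$, and on $[x_c,\Lambda_n)$ it equals the increasing branch $f_1$. Hence the minimizer is $x_c$.
    \item If $x_c > x_2$, then $f_2 > f_1$ on a neighbourhood of $x_2$. The envelope therefore equals $f_2$ on $(0,x_c]$, with minimum at $x_2$, and is increasing beyond $x_c$. Hence the minimizer is $x_2$.
\end{enumerate}
The case condition simplifies as follows:
\begin{equation*}
    x_c \le x_2 \iff \sqrt{g(\Lambda_n + g)} \le \frac{\Lambda_n + g}{2} \iff 4g \le \Lambda_n + g \iff g \le \frac{\Lambda_n}{3}.
\end{equation*}
This yields exactly the two regimes in the statement.

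\textbf{Imposing the statistical constraint.} In either case, the analysis above shows that $\mathcal{E}$ is strictly decreasing on $(0, x^{\text{unc}}]$ and nondecreasing on $[x^{\text{unc}}, \Lambda_n)$, where $x^{\text{unc}}$ denotes the minimizer found above. So $\mathcal{E}$ is unimodal (quasiconvex) on $(0,\Lambda_n)$. The feasible set under the constraint is $[\lambda_{\text{stat}}, \Lambda_n)$.
\begin{itemize}
    \item If $\lambda_{\text{stat}} \le x^{\text{unc}}$, the unconstrained minimizer is feasible and remains optimal.
    \item If $\lambda_{\text{stat}} > x^{\text{unc}}$, the envelope is nondecreasing on the whole feasible set, so the constrained minimizer is $\lambda_{\text{stat}}$.
\end{itemize}
This gives $\lambda_{n,1}^* = \max(\lambda_{n,1}^{\text{unc}}, \lambda_{\text{stat}})$ and $\lambda_{n,2}^* = \Lambda_n - \lambda_{n,1}^*$. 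Positivity of $\lambda_{n,2}^*$ follows from $\lambda_{\text{stat}} < \Lambda_n$ and $x_2 < \Lambda_n$; in the weak regime, $x_c < \Lambda_n$ holds whenever $g < \Lambda_n$, which is guaranteed by $g \le \Lambda_n/3$.

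\textbf{Main obstacle.} The delicate step is the strong-interaction case. There one must verify that the interior minimum of $f_2$ genuinely dominates $f_1$, i.e. that $x_2 < x_c$, so that the envelope's minimum is not the crossing point. One must also check that monotonicity on both sides of $x_2$ survives taking the maximum with $f_1$. The plan is to handle this by the explicit ordering $x_2 < x_c$ together with the sign pattern of $f_2 - f_1$ across $x_c$, rather than relying on any convexity of $\mathcal{E}$ itself.
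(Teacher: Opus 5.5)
Your proof is correct and follows the paper's argument essentially step for step. You parametrize by $\lambda_{n,1}$, locate the crossing $(\Lambda_n+g)/2$ and the vertex of the second branch from the same quadratic, compare them to obtain the threshold $g\le\Lambda_n/3$ (with $g=\|G_{\mathcal{A}^c\mathcal{A}}\|_2$), and clip at $\lambda_{\text{stat}}$ by unimodality; your explicit sign analysis of $f_2'$ and of $f_2-f_1$ makes rigorous the monotonicity that the paper asserts via convexity of the envelope, which it does not prove but which is true. The one small slip, shared by the paper, is the degenerate case $g=0$: there $x_2=\Lambda_n$ falls outside $(0,\Lambda_n)$, $f_2=1/x$ is monotone, and ``$\sqrt{g(\Lambda_n+g)}>g$'' fails, but your case~1 argument still gives $\lambda_{n,1}^{\text{unc}}=\Lambda_n/2$ immediately.
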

\begin{proof}
    See \cref{app:proof-statistically-constrained-optimum}.
\end{proof}

Recall the augmented system $\tilde{A}=\begin{pmatrix}A\\\sqrt{n\lambda_{n,2}}I_d\end{pmatrix}$ and $\tilde{y}=\begin{pmatrix}y\\0_d\end{pmatrix}$. The true data-generating process is $y=A\beta^*+\epsilon$. By expressing the augmented response as $\tilde{y}=\tilde{A}\beta^*+\tilde{\epsilon}$, we can solve for the augmented noise vector:
\begin{equation}
    \tilde{\epsilon}=\tilde{y}-\tilde{A}\beta^*=\begin{pmatrix}A\beta^*+\epsilon\\0_d\end{pmatrix}-\begin{pmatrix}A\beta^*\\\sqrt{n\lambda_{n,2}}\beta^*\end{pmatrix}=\begin{pmatrix}\epsilon\\-\sqrt{n\lambda_{n,2}}\beta^*\end{pmatrix}.
\end{equation}
The strict dual feasibility condition (preventing false positives) requires the $\ell_1$ penalty to suppress the projection of this noise onto the feature space. Computing this projection yields:
\begin{equation}
    \frac{1}{n}\tilde{A}^\top\tilde{\epsilon}=\frac{1}{n}\begin{pmatrix}A^\top&\sqrt{n\lambda_{n,2}}I_d\end{pmatrix}\begin{pmatrix}\epsilon\\-\sqrt{n\lambda_{n,2}}\beta^*\end{pmatrix}=\frac{1}{n}A^\top\epsilon-\lambda_{n,2}\beta^*.
\end{equation}
We contrast this with the standard LASSO formulation (when $\lambda_{n,2}=0$). In the regular LASSO, the effective noise projection reduces to the baseline statistical error $\frac{1}{n}A^\top\epsilon$. Consequently, the statistical threshold $\lambda_{\text{stat}}$ depends only on the variance of the noising process through $\epsilon$. 

On the other hand, the elastic net induces a deterministic shrinkage bias on the active features, namely $-\lambda_{n,2}\beta_{\mathcal{A}}^*$, which acts as an additional source of effective noise. We now make the corresponding inactive KKT calculation precise. Let
\begin{equation}
    q:=\frac{1}{n}A^\top\epsilon,
    \qquad
    H(t):=G_{\mathcal A\mathcal A}+tI,
    \qquad
    s_{\mathcal A}
    :=
    \operatorname{sign}(\widehat\beta_{\mathcal A}).
\end{equation}
For the candidate active set $\mathcal A$, the active KKT equation gives
\begin{equation}
    \widehat\beta_{\mathcal A}
    =
    \beta_{\mathcal A}^*
    +
    H(\lambda_{n,2})^{-1}
    \left(
        q_{\mathcal A}
        -
        \lambda_{n,1}s_{\mathcal A}
        -
        \lambda_{n,2}\beta_{\mathcal A}^*
    \right).
\end{equation}
Define the unnormalised inactive KKT residual by
\begin{equation}
    r_{\mathcal A^c}
    :=
    u_{\mathcal A^c}
    -
    G_{\mathcal A^c\mathcal A}
    \widehat\beta_{\mathcal A}.
\end{equation}
Substitution of the active solution yields
\begin{equation}
    \label{eq:elastic-net-inactive-residual}
    r_{\mathcal A^c}
    =
    q_{\mathcal A^c}
    -
    G_{\mathcal A^c\mathcal A}
    H(\lambda_{n,2})^{-1}q_{\mathcal A}
    +
    \lambda_{n,1}
    G_{\mathcal A^c\mathcal A}
    H(\lambda_{n,2})^{-1}s_{\mathcal A}
    \nonumber
    +
    \lambda_{n,2}
    G_{\mathcal A^c\mathcal A}
    H(\lambda_{n,2})^{-1}\beta_{\mathcal A}^*.
\end{equation}
Strict dual feasibility requires $\|r_{\mathcal A^c}\|_\infty<\lambda_{n,1}$. Together with active sign consistency, this condition provides the usual primal-dual certificate for support recovery.

To obtain a tractable sufficient condition, suppose for this statistical analysis that $G_{\mathcal A\mathcal A}\succ0$, and define
\begin{align}
    \Upsilon
    &:=
    \sup_{0\leq t\leq\Lambda_n}
    \left\|
        G_{\mathcal A^c\mathcal A}
        H(t)^{-1}s_{\mathcal A}
    \right\|_\infty,\\
    \Xi_0
    &:=
    \sup_{0\leq t\leq\Lambda_n}
    \left\|
        q_{\mathcal A^c}
        -
        G_{\mathcal A^c\mathcal A}
        H(t)^{-1}q_{\mathcal A}
    \right\|_\infty.
\end{align}
Assume that $\Upsilon<1$. The shrinkage term can be bounded uniformly by observing that
\begin{equation}
    H(t)^{-1}
    =
    G_{\mathcal A\mathcal A}^{-1}
    G_{\mathcal A\mathcal A}H(t)^{-1}
\end{equation}
and
\begin{equation}
    \left\|
        G_{\mathcal A\mathcal A}H(t)^{-1}
    \right\|_2
    \leq 1.
\end{equation}
Therefore,
\begin{equation}
    \left\|
        G_{\mathcal A^c\mathcal A}
        H(t)^{-1}\beta_{\mathcal A}^*
    \right\|_\infty
    \leq
    \left\|
        G_{\mathcal A^c\mathcal A}
        G_{\mathcal A\mathcal A}^{-1}
    \right\|_{2\to\infty}
    \|\beta_{\mathcal A}^*\|_2
    =:\Omega_0,
\end{equation}
where $\|\cdot\|_{2\to\infty}$ denotes the induced operator norm from $\ell_2$ to $\ell_\infty$.

Applying these bounds to
\eqref{eq:elastic-net-inactive-residual} gives
\begin{equation}
    \|r_{\mathcal A^c}\|_\infty
    \leq
    \Xi_0
    +
    \Upsilon\lambda_{n,1}
    +
    \Omega_0\lambda_{n,2}.
\end{equation}
Hence, a sufficient condition for strict dual feasibility is
\begin{equation}
    (1-\Upsilon)\lambda_{n,1}
    >
    \Xi_0+\Omega_0\lambda_{n,2}
    \quad\Longrightarrow\quad
    \|r_{\mathcal A^c}\|_\infty<\lambda_{n,1}.
\end{equation}
Defining
\begin{equation}
    \label{eq:elastic-net-coupled-constants}
    \Xi
    :=
    \frac{\Xi_0}{1-\Upsilon},
    \qquad
    \Omega
    :=
    \frac{\Omega_0}{1-\Upsilon},
\end{equation}
we obtain the conservative linear coupling condition
\begin{equation}
    \label{eq:elastic-net-linear-coupling}
    \lambda_{n,1}
    >
    \Xi+\Omega\lambda_{n,2}.
\end{equation}
Thus, allocating additional budget to $\lambda_{n,2}$ improves active curvature but can simultaneously increase the sufficient $\ell_1$ penalty required for dual feasibility. This coupling effect transforms the original deterministic noise-suppression problem into a coupled problem where a higher $\lambda_{n,2}$ demands a higher $\lambda_{n,1}$ to suppress its own bias.

This motivates us to bound the approximation error between the coupled and uncoupled systems. The following result quantifies the cost of imposing the closed relaxation $\lambda_{n,1}\geq\Xi+\Omega\lambda_{n,2}$.

\begin{lemma}[Approximation error from coupling constraint]
    \label{lem:uncoupled-gap}
    Suppose that $\|G_{\mathcal A^c\mathcal A}\|_2 \le \Lambda_n / 3$ and that $(\Lambda_n + \|G_{\mathcal A^c\mathcal A}\|_2)/2 < \Xi < \Lambda_n$.
   
    Let $\mathcal E_{\mathrm{uncoupled}}^*$ denote the minimum of the relaxed envelope under
    \begin{equation}
        \lambda_{n,1}+\lambda_{n,2}=\Lambda_n,
        \qquad
        \lambda_{n,1}\geq\Xi,
    \end{equation}
    and let $\mathcal E_{\mathrm{coupled}}^*$ denote its minimum under the conservative coupled constraint
    \begin{equation}
        \lambda_{n,1}+\lambda_{n,2}=\Lambda_n,
        \qquad
        \lambda_{n,1}\geq\Xi+\Omega\lambda_{n,2}.
    \end{equation}
    Then
    \begin{equation}
        \mathcal E_{\mathrm{uncoupled}}^*
        = 
        \frac{1}{\Lambda_n-\Xi},
        \qquad
        \mathcal E_{\mathrm{coupled}}^*
        =
        \frac{1+\Omega}{\Lambda_n-\Xi}.
    \end{equation}
    Consequently,
    \begin{equation}
        \mathcal E_{\mathrm{coupled}}^*
        -
        \mathcal E_{\mathrm{uncoupled}}^*
        =
        \frac{\Omega}{\Lambda_n-\Xi},
        \quad 
        \text{and, }
        \quad
        \frac{
            \mathcal E_{\mathrm{coupled}}^*
        }{
            \mathcal E_{\mathrm{uncoupled}}^*
        }
        =
        1+\Omega.
    \end{equation}
\end{lemma}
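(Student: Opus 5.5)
The plan is to reduce both constrained problems to one-dimensional minimizations in $\lambda_{n,1}$ along the budget line. Write $g := \|G_{\mathcal A^c\mathcal A}\|_2$ and substitute $\lambda_{n,2} = \Lambda_n - \lambda_{n,1}$. The second branch of the envelope can be rewritten as
\begin{equation*}
    \frac{1}{\lambda_{n,1}}\left(1+\frac{g}{\lambda_{n,2}}\right) = \frac{\lambda_{n,2}+g}{\lambda_{n,1}\lambda_{n,2}}.
\end{equation*}
Hence the first branch $1/\lambda_{n,2}$ dominates exactly when $\lambda_{n,2}+g \le \lambda_{n,1}$, that is, when $\lambda_{n,1} \ge (\Lambda_n+g)/2$. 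This threshold is the weak-interaction optimum $\lambda_{n,1}^{\text{unc}}$ of \cref{thm:statistically-constrained-optimum}, and $g\le\Lambda_n/3$ guarantees that it lies strictly inside $(0,\Lambda_n)$. On the half-line $\lambda_{n,1}\in[(\Lambda_n+g)/2,\Lambda_n)$ we therefore have $\mathcal E = 1/(\Lambda_n-\lambda_{n,1})$, which is strictly increasing in $\lambda_{n,1}$.

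\textbf{Uncoupled problem.} The assumption $\Xi > (\Lambda_n+g)/2$ places the whole feasible set $[\Xi,\Lambda_n)$ inside the monotone region. The minimum is therefore attained at the boundary $\lambda_{n,1}=\Xi$, where $\mathcal E = 1/(\Lambda_n - \Xi)$. This value is finite because $\Xi<\Lambda_n$. It is also consistent with \cref{thm:statistically-constrained-optimum}, with $\lambda_{\text{stat}}=\Xi$ active.

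\textbf{Coupled problem.} Substituting $\lambda_{n,2}=\Lambda_n-\lambda_{n,1}$ turns the linear coupling constraint into a lower bound on $\lambda_{n,1}$ alone:
\begin{equation*}
    \lambda_{n,1} \ge \frac{\Xi+\Omega\Lambda_n}{1+\Omega} = \Xi + \frac{\Omega(\Lambda_n-\Xi)}{1+\Omega}.
\end{equation*}
Here $\Omega\ge 0$ because $\Omega_0\ge0$ and $\Upsilon<1$.
\begin{itemize}
    \item Since $\Lambda_n>\Xi$, this threshold is at least $\Xi$, so the coupled feasible set is a sub-interval of the uncoupled one.
    \item The threshold is also strictly less than $\Lambda_n$, so the feasible set is nonempty with $\lambda_{n,2}>0$.
    \item The feasible set again lies in the monotone region, so the minimizer is the boundary point.
\end{itemize}
At that point $\lambda_{n,2}^* = (\Lambda_n-\Xi)/(1+\Omega)$, which gives $\mathcal E^*_{\mathrm{coupled}} = (1+\Omega)/(\Lambda_n-\Xi)$. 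The stated difference and ratio then follow by direct subtraction and division.

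\textbf{Main obstacle.} The step needing the most care is confirming that the maximum in $\mathcal E$ is attained by the $1/\lambda_{n,2}$ branch on the entire feasible set, not merely at its endpoint. The identity above reduces this to the single inequality $\lambda_{n,1}\ge(\Lambda_n+g)/2$. That inequality holds on both feasible sets because $\Xi>(\Lambda_n+g)/2$ and both feasible sets lie to the right of $\Xi$. The only remaining check is that the constraints leave a nonempty feasible set with $\lambda_{n,2}>0$, which the hypothesis $\Xi<\Lambda_n$ ensures.
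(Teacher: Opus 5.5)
Your proposal is correct and follows essentially the same route as the paper's proof. Both arguments restrict to the budget line, show the feasible minimizer is the left endpoint, and identify the $1/\lambda_{n,2}$ branch as active via $\lambda_{n,1} > \lambda_{n,2} + \|G_{\mathcal A^c\mathcal A}\|_2$, giving $\lambda_{n,2}^* = (\Lambda_n-\Xi)/(1+\Omega)$.

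The one refinement is that you check branch dominance on the whole half-line $\lambda_{n,1}\ge(\Lambda_n+\|G_{\mathcal A^c\mathcal A}\|_2)/2$. This gives monotonicity directly rather than importing it from \cref{thm:statistically-constrained-optimum}, and it shows the hypothesis $\|G_{\mathcal A^c\mathcal A}\|_2\le\Lambda_n/3$ is not actually needed. You also state explicitly that $\Omega\ge 0$, which the paper uses without comment.
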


The lemma shows that the conservative coupling produces an absolute envelope gap of order $(\Lambda_n-\Xi)^{-1}$. Thus, for fixed $\Omega$,
the absolute gap decreases as the available budget above the baseline statistical floor increases. The relative difference factor, however,
remains $1+\Omega$, and the corresponding hyperparameter allocations need not converge.

More concretely, \cref{fig:en-phase-transition} provides support recovery and empirical Hoffman heatmaps over varying regularization parameters (normalized by the noise floor $\Xi$) for a fixed elastic net problem. The theoretical curves are overlaid. A few observations are notable: the naive and constrained closed-form curves from \cref{lem:naive-optimal-elasticnet} and \cref{thm:statistically-constrained-optimum} differ only for very small values of $\lambda_{n,1}$ (where $\lambda_{n,1} / \Xi < 1$). Both underestimate the $\lambda_{n,1}$ regularization required when $\lambda_{n,1} / \Xi < 10$ due to the coupling described in \eqref{eq:elastic-net-linear-coupling}; beyond this point, the statistical recovery constraints are no longer the limiting factor and the theoretical paths become visually close, consistent with the decay of the absolute envelope gap in \cref{lem:uncoupled-gap}. Finally, we observe that when the interaction regime boundary $\Lambda_n = 3 \|G_{\mathcal{A}^c\mathcal{A}}\|_2$ intersects the paths, the split shifts in the fashion described by the phase transition in \cref{thm:statistically-constrained-optimum}.

\begin{figure}[ht]
    \centering
    \includegraphics[width=0.9\textwidth]{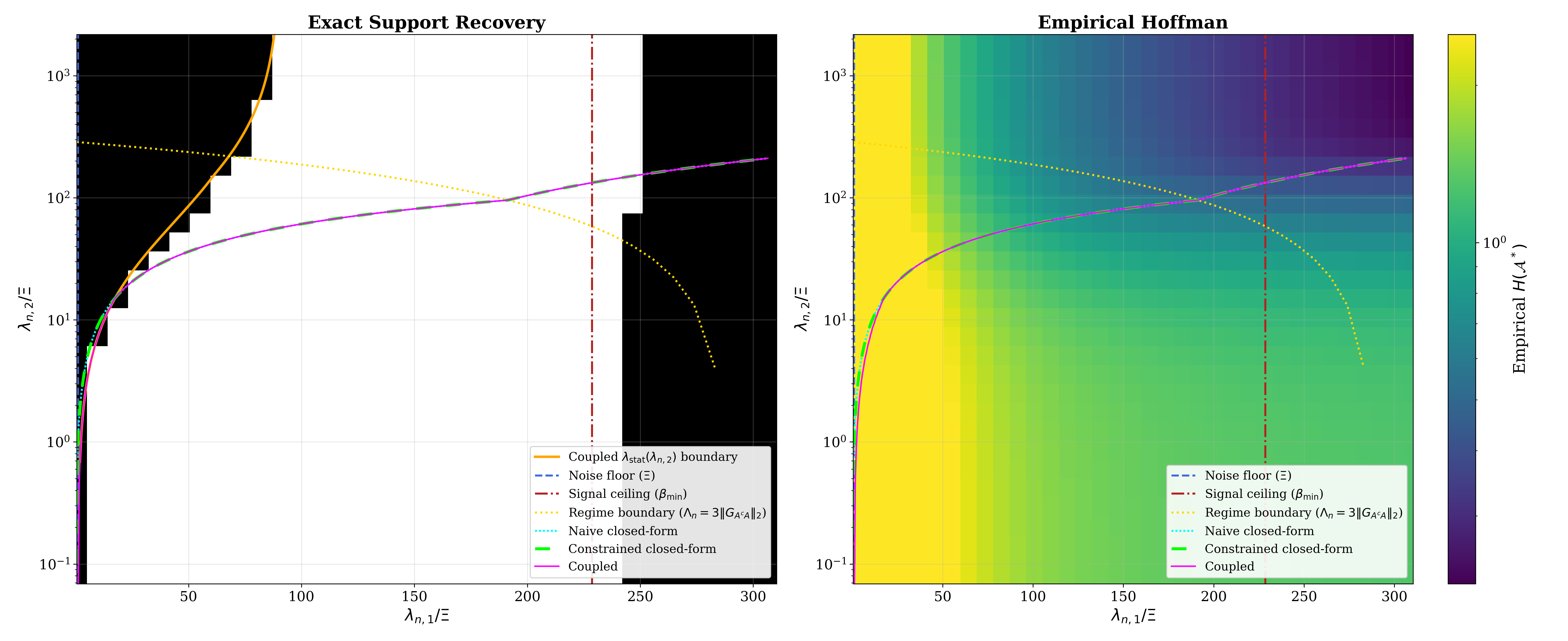}
    \caption{\textbf{Elastic net phase transition map on a Gaussian design ($n=300, d=500, s=5$).} (Left) Exact support recovery (white) for varying regularization parameters against theoretical boundaries. The coupled $\lambda_{\text{stat}}(\lambda_{n, 2})$ boundary correctly identifies the rightward shift in the noise floor caused by internal shrinkage bias. (Right) Empirical affine Hoffman constant of the true support for varying regularization parameters. Note the $y$-axis is log-scaled in both plots.}
    \label{fig:en-phase-transition}
\end{figure}

While it is theoretically possible to find the optimal regularization parameters in an oracle sense, both the noise parameter $\epsilon$ and the final active set are unknown a priori in practice. However, as mentioned in the footnote, these may be estimated. For example, a shorter pilot run of LASSO may be used to estimate the active set $\mathcal{A}$. In the case of a Gaussian design, if the noise variance $\sigma^2$ is known, concentration inequalities may be used to estimate the noise floor $\Xi$ directly. Or, more generally, the bootstrap or the high-dimensional central limit theorem are available~\citep{chernozhukov2023high}.

\subsection{Adaptive LASSO as an Interaction Dampener}

While the elastic net stabilizes the active curvature, its geometry does not intrinsically resolve the active-inactive feature interaction penalty. To address this, the adaptive LASSO modifies the regularization penalty by applying a unique feature-specific weight $w_j \ge 0$ to each coefficient.

\begin{problem}[The adaptive LASSO estimator; \citet{zou2006adaptivelasso}]
    \begin{equation}
    \hat{\beta} = \uargmin_{\beta \in \mathbb{R}^d} \frac{1}{2n} \|A\beta - y\|_2^2 + \lambda_{n} \sum_{j=1}^d w_j |\beta_j|.
    \end{equation}
\end{problem}

In practice, the weights are typically chosen as $w_j = 1/|\hat{\beta}^{\text{init}}_j|^\zeta$ for some $\zeta > 0$, where $\hat{\beta}^{\text{init}}$ is an initial consistent estimator (such as the OLS or standard LASSO estimator). If a feature is a true noise variable, its initial estimate tends to 0 ($\hat{\beta}^{\text{init}}_j \to 0$) causing its penalty weight to diverge ($w_j \to \infty$).

Geometrically, this feature-specific weighting alters the separable polyhedral constraints $\Gamma_{\text{con}}$ to nonuniform, rescaled box and sign constraints. That is, the new KKT condition is given by:
\begin{equation}
    \frac{1}{n} A^\top (A\beta - y) + \lambda_n 
    W g = 0,
\end{equation}
where $W = \diag(w)$, and $w_j g_j = w_j\sign(\beta_j)$ if $\beta_j \neq 0$ or $w_j g_j \in [-w_j, w_j]$ if $\beta_j = 0$. Note that, since \cref{prop:hoffman-separable-polyhedral} applies to weighted $\ell_p$ norms, the Hoffman bound on $\Gamma_{\text{con}}$ still applies.

To construct the modified KKT system, we define the active and inactive diagonal weight matrices $W_{\mathcal{A}} = \operatorname{diag}(w_j)_{j \in \mathcal{A}}$ and $W_{\mathcal{A}^c} = \diag(w_j)_{j \in \mathcal{A}^c}$ respectively. The affine KKT system takes the form:
\begin{equation}
    \underbrace{\begin{pmatrix}
    G_{\mathcal{A}\mathcal{A}} & 0 \\
    G_{\mathcal{A}^c\mathcal{A}} & \lambda_n W_{\mathcal{A}^c}
    \end{pmatrix}}_{:= M_{\mathcal{A}}^{\text{Ada}}}
    \begin{pmatrix}
    \beta_{\mathcal{A}} \\
    g_{\mathcal{A}^c}
    \end{pmatrix}
    =
    \begin{pmatrix}
    u_{\mathcal{A}} - \lambda_n W_{\mathcal{A}} \sign(\beta_{\mathcal{A}}) \\
    u_{\mathcal{A}^c}
    \end{pmatrix}.
\end{equation}
Assuming some condition to make $G_{\mathcal{A}\mathcal{A}}$ invertible, and assuming $w_j > 0$ for all inactive features, the block inverse is given by:
\begin{equation}
    (M_{\mathcal{A}}^{\text{Ada}})^{-1} = 
    \begin{pmatrix}
    G_{\mathcal{A}\mathcal{A}}^{-1} & 0 \\[4pt]
    -\frac{1}{\lambda_n} W_{\mathcal{A}^c}^{-1} G_{\mathcal{A}^c\mathcal{A}} G_{\mathcal{A}\mathcal{A}}^{-1} & \frac{1}{\lambda_n} W_{\mathcal{A}^c}^{-1}
    \end{pmatrix}.
\end{equation}
Observe that the active curvature block $G_{\mathcal{A}\mathcal{A}}^{-1}$ is completely unchanged from the standard LASSO. However, the interaction block is now modulated by the inverse weight matrix $W_{\mathcal{A}^c}^{-1} = \operatorname{diag}(1/w_j)_{j \in \mathcal{A}^c}$.

\begin{theorem}[Spectral Hoffman bound for the adaptive LASSO]
    The restricted $\ell_2$ Hoffman constant for the adaptive LASSO KKT system is bounded by:
    \begin{equation}
        H^{2, 2}_{\mathcal{S}_{\mathcal{A}}}(\Gamma_{\text{aff}, \mathcal{A}}^{\text{Ada}}) \le \sqrt{ \|G_{\mathcal{A}\mathcal{A}}^{-1}\|_2^2 + \frac{1}{\lambda_n^2} \max_{j \in \mathcal{A}^c} \left( \frac{1}{w_j^2} \right) + \frac{1}{\lambda_n^2} \|W_{\mathcal{A}^c}^{-1} G_{\mathcal A^c\mathcal A} G_{\mathcal{A}\mathcal{A}}^{-1}\|_2^2 }.
    \end{equation}
\end{theorem}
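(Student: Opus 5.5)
The plan is to repeat the argument of \cref{bound:dimension-free-bound}, this time applied to the explicit block inverse $(M_{\mathcal{A}}^{\text{Ada}})^{-1}$ displayed just before the statement. First, the affine reduction gives $H^{2,2}_{\mathcal{S}_{\mathcal{A}}}(\Gamma_{\text{aff},\mathcal{A}}^{\text{Ada}}) \le \|(M_{\mathcal{A}}^{\text{Ada}})^{-1}\|_2$. This uses the quotient-space characterization \eqref{eq:affine-operator-norm}, where invertibility holds because $G_{\mathcal{A}\mathcal{A}} \succ 0$ and $w_j>0$ on $\mathcal{A}^c$, so $\det M^{\text{Ada}}_{\mathcal{A}} = \det G_{\mathcal{A}\mathcal{A}}\cdot\lambda_n^{d-s}\prod_{j\in\mathcal{A}^c} w_j \neq 0$. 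It therefore suffices to bound $\sup_{\|z\|_2=1}\|(M_{\mathcal{A}}^{\text{Ada}})^{-1}z\|_2$.

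Take a unit vector $z=(x^\top,y^\top)^\top$ with $\|x\|_2^2+\|y\|_2^2=1$. Multiplying by the block inverse gives
\begin{equation*}
\|(M_{\mathcal{A}}^{\text{Ada}})^{-1}z\|_2^2 = \|G_{\mathcal{A}\mathcal{A}}^{-1}x\|_2^2 + \frac{1}{\lambda_n^2}\left\| -W_{\mathcal{A}^c}^{-1}G_{\mathcal{A}^c\mathcal{A}}G_{\mathcal{A}\mathcal{A}}^{-1}x + W_{\mathcal{A}^c}^{-1}y\right\|_2^2 .
\end{equation*}
The first term is at most $\|G_{\mathcal{A}\mathcal{A}}^{-1}\|_2^2\|x\|_2^2 \le \|G_{\mathcal{A}\mathcal{A}}^{-1}\|_2^2$. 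For the second term, write the vector inside as $Pw$ with $P = [\,-W_{\mathcal{A}^c}^{-1}G_{\mathcal{A}^c\mathcal{A}}G_{\mathcal{A}\mathcal{A}}^{-1},\ W_{\mathcal{A}^c}^{-1}\,]$ and $w=(x,y)$. Then $\|Pw\|_2^2 \le \|P\|_2^2 \le \|PP^\top\|_2$, and
\begin{equation*}
PP^\top = W_{\mathcal{A}^c}^{-1}G_{\mathcal{A}^c\mathcal{A}}G_{\mathcal{A}\mathcal{A}}^{-2}G_{\mathcal{A}\mathcal{A}^c}W_{\mathcal{A}^c}^{-1} + W_{\mathcal{A}^c}^{-2}.
\end{equation*}
The triangle inequality bounds $\|PP^\top\|_2$ by $\|W_{\mathcal{A}^c}^{-1}G_{\mathcal{A}^c\mathcal{A}}G_{\mathcal{A}\mathcal{A}}^{-1}\|_2^2 + \max_{j\in\mathcal{A}^c} w_j^{-2}$. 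This is equivalent to the triangle inequality followed by Cauchy--Schwarz used in \cref{bound:dimension-free-bound}: applying $(a+b)^2\le(\alpha^2+\beta^2)(\|x\|^2+\|y\|^2)$ with $\alpha=\|W_{\mathcal{A}^c}^{-1}G_{\mathcal{A}^c\mathcal{A}}G_{\mathcal{A}\mathcal{A}}^{-1}\|_2$ and $\beta=\|W_{\mathcal{A}^c}^{-1}\|_2$ gives the same estimate. Finally, $\|W_{\mathcal{A}^c}^{-1}\|_2 = \max_{j\in\mathcal{A}^c} 1/w_j$ because $W_{\mathcal{A}^c}$ is a positive diagonal matrix.

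Adding the two terms, using $\|x\|_2^2\le 1$ and $\|x\|_2^2+\|y\|_2^2=1$, taking the supremum over unit $z$, and taking a square root gives the stated bound. I expect no real obstacle. The only point needing care is to keep the weight matrix attached to the interaction block, i.e.\ to bound $\|W_{\mathcal{A}^c}^{-1}G_{\mathcal{A}^c\mathcal{A}}G_{\mathcal{A}\mathcal{A}}^{-1}\|_2$ as a single quantity rather than splitting it by submultiplicativity, since that product is exactly the dampened interaction term the statement is meant to expose.
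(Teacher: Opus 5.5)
Your proposal is correct and takes essentially the paper's route: the paper likewise reruns the partitioned unit-vector argument of \cref{bound:dimension-free-bound} on $(M_{\mathcal{A}}^{\text{Ada}})^{-1}$, with the identity block replaced by $W_{\mathcal{A}^c}^{-1}$ and $\|W_{\mathcal{A}^c}^{-1}\|_2^2=\max_{j\in\mathcal{A}^c} w_j^{-2}$. Your identity $\|P\|_2^2=\|PP^\top\|_2$ is just an equivalent way of writing the triangle-inequality-plus-Cauchy--Schwarz step, and, as the statement requires, you correctly bound $\|W_{\mathcal{A}^c}^{-1}G_{\mathcal{A}^c\mathcal{A}}G_{\mathcal{A}\mathcal{A}}^{-1}\|_2$ as a single operator rather than splitting it.
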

\begin{proof}
    Applying the variational expansion from \cref{bound:dimension-free-bound} to $(M_{\mathcal{A}}^{\text{Ada}})^{-1}$, we substitute the identity matrix with $W_{\mathcal{A}^c}^{-1}$. The squared spectral norm of the diagonal weight block is the maximum squared diagonal entry, yielding $\max_{j \in \mathcal{A}^c} (1/w_j^2)$. The rest of the bound follows identically by submultiplicativity.
\end{proof}
This result provides a geometric interpretation of adaptive reweighting. The inverse weight matrix $W_{\mathcal A^c}^{-1}$ rescales the rows of the active-inactive interaction block, so larger inactive weights attenuate their contributions to the Hoffman bound. However, divergence of the weights alone does not guarantee that these contributions vanish: their behavior also depends on the regularization parameter, the interaction geometry, and any growth in dimension. Specifically, writing $w_{\min,n}:=\min_{j\in\mathcal A^c}w_j$, we have
\begin{equation}
    \frac{1}{\lambda_n}
    \|
    W_{\mathcal A^c}^{-1}
    G_{\mathcal A^c\mathcal A}G_{\mathcal A\mathcal A}^{-1}
    \|_2
    \leq
    \frac{
    \|G_{\mathcal A^c\mathcal A}G_{\mathcal A\mathcal A}^{-1}\|_2
    }{
    \lambda_n w_{\min,n}
    }.
\end{equation}
Thus, adaptive reweighting provides an alternative mechanism for controlling the interaction contribution without directly imposing incoherence conditions on the unweighted design, provided the inactive weights grow sufficiently fast relative to the interaction norm and the inverse regularization parameter.

\subsubsection{The Adaptive Elastic Net}

Having established that the elastic net stabilizes active curvature and the adaptive LASSO dampens active-inactive interaction, it is natural to consider their synthesis. Proposed by \citet{zou2009adaptiveelasticnet}, the adaptive elastic net combines a squared $\ell_2$ penalty with a weighted $\ell_1$ penalty, achieving the oracle property even in highly collinear scenarios where the standard adaptive LASSO fails.

\begin{problem}[The adaptive elastic net estimator; \citet{zou2009adaptiveelasticnet}]
    \begin{equation}
    \hat{\beta} = \uargmin_{\beta \in \mathbb{R}^d} \frac{1}{2n} \|A\beta - y\|_2^2 + \lambda_{n,1} \sum_{j=1}^d w_j |\beta_j| + \frac{\lambda_{n, 2}}{2} \|\beta\|_2^2.
    \end{equation}
\end{problem}

We can immediately construct the affine KKT system for this hybrid regularizer by combining the augmented active block of the elastic net with the weighted inactive bounds of the adaptive LASSO. Let $W_{\mathcal{A}^c} = \operatorname{diag}(w_j)_{j \in \mathcal{A}^c}$. The KKT block matrix is given by:
\begin{equation}
    M_{\mathcal{A}}^{\text{AEN}} = 
    \begin{pmatrix}
    G_{\mathcal{A}\mathcal{A}} + \lambda_{n,2} I & 0 \\
    G_{\mathcal{A}^c\mathcal{A}} & \lambda_{n,1} W_{\mathcal{A}^c}
    \end{pmatrix}.
\end{equation}
Because the active block is positive definite due to $\lambda_{n,2}$, and assuming $w_j > 0$ for all inactive features, the block inverse exists for any active set:
\begin{equation}
    (M_{\mathcal{A}}^{\text{AEN}})^{-1} = 
    \begin{pmatrix}
    (G_{\mathcal{A}\mathcal{A}} + \lambda_{n,2} I)^{-1} & 0 \\[4pt]
    -\frac{1}{\lambda_{n,1}} W_{\mathcal{A}^c}^{-1} G_{\mathcal{A}^c\mathcal{A}} (G_{\mathcal{A}\mathcal{A}} + \lambda_{n,2} I)^{-1} & \frac{1}{\lambda_{n,1}} W_{\mathcal{A}^c}^{-1}
    \end{pmatrix}.
\end{equation}

\begin{bound}[Euclidean Hoffman bound for the adaptive elastic net]
    For any active set $\mathcal{A}$, including nonsparse active sets, the restricted $\ell_2$ Hoffman constant for the adaptive elastic net is bounded by:
    \begin{equation}
        H^{2, 2}_{\mathcal{S}_{\mathcal{A}}}(\Gamma_{\text{aff}, \mathcal{A}}^{\text{AEN}}) \le \sqrt{ \frac{1}{\lambda_{n,2}^2} + \frac{1}{\lambda_{n,1}^2} \max_{j \in \mathcal{A}^c} \left( \frac{1}{w_j^2} \right) + \frac{\|W_{\mathcal{A}^c}^{-1} G_{\mathcal A^c\mathcal A}\|_2^2}{\lambda_{n,1}^2 \lambda_{n,2}^2} }.
    \end{equation}
\end{bound}
\begin{proof}
    Applying the variational expansion from \cref{bound:dimension-free-bound} to $(M_{\mathcal{A}}^{\text{AEN}})^{-1}$ and using submultiplicativity on the interaction term yields:
    \begin{equation}
        \|W_{\mathcal{A}^c}^{-1} G_{\mathcal A^c\mathcal A} (G_{\mathcal{A}\mathcal{A}} + \lambda_{n,2} I)^{-1}\|_2 \le \|W_{\mathcal{A}^c}^{-1} G_{\mathcal A^c\mathcal A}\|_2 \|(G_{\mathcal{A}\mathcal{A}} + \lambda_{n,2} I)^{-1}\|_2 \le \frac{\|W_{\mathcal{A}^c}^{-1} G_{\mathcal A^c\mathcal A}\|_2}{\lambda_{n,2}}.
    \end{equation}
    Squaring this bounded term and substituting the bounds for the diagonal blocks yields the stated result.
\end{proof}
As in \cref{bound:elastic-net}, this result can be tightened when some bound on $\lambda_{\min}(G_{\mathcal{A}\mathcal{A}})$ is provided. From a geometric perspective, it is now easy to see how adaptive elastic net creates a well-conditioned polyhedral landscape by controlling both geometric sources of instability (active set curvature and active-inactive feature interaction) which otherwise lead to loose Hoffman-type bounds.

\section{Discussion}
\label{sec:discussion}

In this paper, we derived a decomposition for bounding the Hoffman constant of general sparse polyhedral regularizers. In doing so, we isolated the contributions of the affine, polyhedral, and interaction components. When the same absolute norm is used on the domain and residual space, the polyhedral component reduces exactly to unity. Furthermore, under mild uniqueness conditions---such as general position for sparse active sets or the RNP---the geometric interaction penalty collapses, allowing the linear regularity constant to be evaluated exactly.

By discussing the properties of the LASSO estimator, we demonstrated that the affine component permits many bounding techniques which depend on the choice of norm; for example, in the $\ell_2$ norm, techniques for bounding the minimum positive singular value become pertinent. We provide a deterministic, dimension-free bound using matrix perturbation techniques. These bounds further emphasize that the affine component depends on curvature in the active set and feature interaction between the active and inactive sets.  

In order to bound these two sources of instability in the affine component, we considered conditions from high-dimensional statistics that hold with high probability of random design matrices. The example of the spiked model in \cref{subsubsec:spiked-model} demonstrated that the RE condition was not sufficient to control the Hoffman bound. Indeed, in the $\ell_2$ norm, the RE and RIP conditions are sufficient to only control the active curvature over some active set or all sparse active sets respectively. Further incoherence conditions like the MIC or the IRC are required to bound the active-inactive feature interaction, yet scale poorly when translated to the $\ell_2$ setting. This naturally motivated a discussion for Hoffman bounds in other norms ($\ell_1$ and $\ell_\infty$) which other statistical conditions could control.

Finally, we concluded by extending the analysis to the elastic net and adaptive LASSO. Elastic net provides a lower bound on the active curvature but leaves the active-inactive feature interaction unaffected; adaptive LASSO, on the other hand, bounds the active-inactive feature interaction but leaves the active curvature term unaffected; it is shown that adaptive elastic net controls both these terms by combining their effects. Hence, providing an explicit, interpretable decomposition of the Hoffman bound permits an immediate geometric justification for these methods, and provides a framework for designing novel regularization schemes that simultaneously optimize statistical recovery, algorithmic convergence rates, and geometric stability. Further, in the case of elastic net, we used the variational Hoffman bound as a method of determining the optimal hyperparameters---namely, the $\ell_1$ and $\ell_2$ penalties that minimize the upper bound on the Hoffman constant---subject to some statistical recovery condition, a method which can be used in a wide range of settings.

We conclude with directions for future work:
\begin{itemize}
    \item \textbf{Relaxing uniqueness and sparsity assumptions.} Our analysis has focused on the sparse regime where, under mild assumptions, the KKT block matrix is invertible and the bounded regularity constant can be evaluated exactly. For nonsparse active sets during early optimization iterates, the affine subspace relies on the pseudoinverse $M_{\mathcal{A}}^\dagger$ in the spectral case, and $\Gamma_{\text{aff}}$ is no longer a singleton. Bounding the intersection angle $\tau(\Gamma_{\text{aff}}, \Gamma_{\text{con}})$ in this regime remains an open problem. A promising direction is the use of more granular nullspace conditions, such as the stable or robust nullspace properties (SNP/RoNP)~\citep{foucart2013compressivesensing}. While the standard RNP is binary and fails nonsparse active sets, the SNP and RoNP provide descriptive geometric bounds on the $\ell_2$ and $\ell_1$ mass of the nullspace vectors, which could potentially be mapped directly to the linear regularity constant.

    Moreover, in settings involving discrete or categorical features, the design matrix may fail to be in general position with nonnegligible probability, leading to noninvertible KKT systems.

    \item \textbf{Extensions to other polyhedral regularizers.} The results provided in \cref{sec:explicit-hc} apply to a more general class of polyhedral (separable) regularizers. Because this geometric decomposition relies fundamentally on the intersection of affine subspaces with separable polyhedra, it provides a natural pathway to analyze other piecewise-linear or $\ell_\infty$-bounded estimators, such as sparse classification via the $\ell_1$ support vector machines, robust regression via least absolute deviations, and structured sparsity via the $\ell_1/\ell_\infty$ group LASSO. 
    
    Further, extending this geometric decomposition to nonseparable regularizers, such as the fused LASSO or generalized total variation, introduces a nontrivial polyhedral geometry on $\Gamma_{\text{con}}$. Because these regularizers impose incidence matrices and pairwise parameter constraints, the linear regularity constant $\tau(\Gamma_{\text{aff}}, \Gamma_{\text{con}})$ will no longer cleanly collapse, requiring new techniques to measure the angle of intersection between the affine system and the coupled constraint faces.

    \item \textbf{Probabilistic Hoffman bounds.} While we have given deterministic upper bounds, the construction of probabilistic upper bounds on the Hoffman constant is the next step. These partially follow immediately from the literature on probabilistic bounds for singular values~\citep{wainwright2019highdimensional, vershynin2018highdimensional, rudelson2010singularvalues}. These bounds can either be asymptotic or nonasymptotic. However, these high-probability bounds tend to be dependent on the design matrix and its distribution.

    \item \textbf{Geometric-driven hyperparameter tuning.} We introduced a closed-form geometric criterion for selecting the elastic net hyperparameters to tighten the variational Hoffman upper bound. Apart from using this technique for other regularizers, future work could embed this theoretical boundary directly into automated hyperparameter search algorithms or homotopy path solvers, dynamically adjusting the regularization parameters to achieve a tighter convergence rate without sacrificing statistical validity. For example, in a Bayesian setting, given some prior on the distribution of the noise---or some posterior distribution based on empirical observations---the distribution of optimal hyperparameters can be computed.

    \item \textbf{Deterministic and totally unimodular designs.} While our statistical analysis heavily relies on the high-probability restricted conditions of random design matrices, the proposed geometric decomposition applies equally well to deterministic design matrices endowed with specific, highly structured properties---which often appear in linear programming and combinatorial optimization problems~\citep{guler1995approximations}. An example is the class of totally unimodular (TUM) matrices, which are important in linear programming. Because TUM matrices possess structure which allows their global Hoffman constants to be analyzed~\citep{hinder2024tum}, applying our localized decomposition to these deterministic LP settings could yield tight, explicit convergence rates without the need for probabilistic, distribution-dependent assumptions.
\end{itemize}

\section*{Software Availability} Code to reproduce all figures and numerical experiments in this paper is provided in the \texttt{experiments/hoffman/} directory of the following \href{https://github.com/farischaudhry/explicit-hoffman-constants}{GitHub repository}.

\section*{Acknowledgments}  A.M.~is supported by the EPSRC AI Hub on Mathematical Foundations of Intelligence: An ``Erlangen Programme'' for AI [EP/Y028872/1]. K.Y.~is supported by JSPS KAKENHI (24K15120, 24H00247, 26K02869, 26K02871).

\bibliographystyle{plainnat}
\bibliography{bibliography} 

\clearpage
\appendix

\section{Derivation of the Full LASSO Polyhedron}
\label{app:lasso-polyhedral}

To apply the restricted Hoffman framework developed in \cref{sec:explicit-hc}, the underlying solution mapping of the optimization problem must be polyhedral. We construct the constraint matrix of the LASSO KKT system to demonstrate its polyhedral geometry.

\subsection{Optimality Conditions}

Recall the LASSO objective $F(\beta) = f(\beta) + h(\beta)$, where $f(\beta) := \frac{1}{2n}\|A\beta - y\|_2^2$ and $h(\beta) := \lambda_n \|\beta\|_1$. First, note that both $f(\beta)$ and $h(\beta)$ are convex, and thus $F(\beta)$ is convex. For a convex function, the condition that the zero vector is in the subdifferential is both necessary and sufficient for global optimality. That is, $\hat{\beta}$ is a global minimizer of the LASSO estimator if and only if:
\begin{equation}
    0 \in \partial F(\hat{\beta}) = \nabla f(\hat{\beta}) + \partial h(\hat{\beta}).
\end{equation}
Now we compute the gradient of the smooth part and characterize the subgradient of the nonsmooth part:
\begin{equation}
    \nabla f(\beta) = \frac{1}{n} A^{\top} (A\beta - y) = \frac{1}{n}A^{\top} A \beta - \frac{1}{n}A^{\top} y,
\end{equation}
and $\partial h(\beta) = \lambda_n \partial\|\beta\|_1$. The optimality condition requires that there exists a specific subgradient vector $g \in \partial\|\beta\|_1$ such that the components of $g$ satisfy:
\begin{equation}
    \begin{cases} 
    g_j =\sign(\beta_j) & \text{if } \beta_j \neq 0, \\
    g_j \in  [-1, 1] & \text{if } \beta_j = 0.
    \end{cases}
\end{equation}

Substituting this specific vector $g$ into the optimality condition yields
\begin{equation}
    \frac{1}{n}A^{\top} A \beta - \frac{1}{n}A^{\top} y + \lambda_n g = 0.
\end{equation}
Let $G := \frac{1}{n}A^{\top} A \in \mathbb{R}^{d \times d}$ be the normalized Gram matrix and $u := \frac{1}{n}A^{\top} y \in \mathbb{R}^{d}$. Hence the KKT equality is given in its fixed form:
\begin{equation}
    \label{eq:kkt-equality-lasso}
    G\beta - u + \lambda_n g = 0.
\end{equation}

\subsection{The Local KKT System for a Fixed Active Set}

For a fixed active set $\mathcal{A}$, the unknowns are:
\begin{equation}
   \begin{pmatrix}
\beta_{\mathcal{A}} \\
g_{\mathcal{A}^c}
\end{pmatrix}, 
\end{equation}
since all other $\beta_{\mathcal{A}^c} = 0$ and $g_{\mathcal{A}} = \sign(\beta_\mathcal{A}) \in \{-1, 1\}$ are known.

Consider now that the KKT system in \eqref{eq:kkt-equality-lasso} can be partitioned as 
\begin{equation}
    \begin{pmatrix}
G_{\mathcal{A}\mathcal{A}} & G_{\mathcal{A} \mathcal{A}^c} \\
G_{\mathcal{A}^c \mathcal{A}} & G_{\mathcal{A}^c \mathcal{A}^c} 
\end{pmatrix}
\begin{pmatrix}
\beta_{\mathcal{A}} \\
\beta_{\mathcal{A}^c}
\end{pmatrix}
+ \lambda_n 
\begin{pmatrix}
g_{\mathcal{A}} \\
g_{\mathcal{A}^c}
\end{pmatrix}
=
\begin{pmatrix}
u_\mathcal{A} \\
u_{\mathcal{A}^c}
\end{pmatrix},
\end{equation}
which, since $\beta_{\mathcal{A}^c} = 0$ and the active signs $g_{\mathcal{A}}$ are fixed, can be further reduced to:
\begin{equation}
    \begin{pmatrix}
G_{\mathcal{A}\mathcal{A}} & 0 \\
G_{\mathcal{A}^c \mathcal{A}} & \lambda_n I
\end{pmatrix}
\begin{pmatrix}
\beta_{\mathcal{A}} \\
g_{\mathcal{A}^c}
\end{pmatrix}
=
\begin{pmatrix}
 u_{\mathcal{A}} -\lambda_n g_{\mathcal{A}} \\
u_{\mathcal{A}^c}
\end{pmatrix}.
\end{equation}
The first $2 \times 2$ block matrix is exactly the matrix denoted as $M_\mathcal{A}$ in \cref{sec:application-lasso}, and the right-hand side is denoted as $b_{\mathcal{A}}$. Notice this forms a linear system of the form $M_{\mathcal{A}} z = b_{\mathcal{A}}$, where $z := (\beta_{\mathcal{A}}^\top, g_{\mathcal{A}^c}^\top)^{\top}$.

The final full polyhedron of the system can now be written by separating the equality into two inequalities and appending the bounds on the variables:
\begin{equation}
\label{eq:full-kkt-inequality-polyhedron}
\begin{pmatrix}
G_{\mathcal{A}\mathcal{A}} & 0 \\
-G_{\mathcal{A}\mathcal{A}} & 0 \\
G_{\mathcal{A}^c \mathcal{A}} & \lambda_n I \\
-G_{\mathcal{A}^c \mathcal{A}} & -\lambda_n I \\
0 & I \\ 
0 & -I \\
-\operatorname{diag}(g_{\mathcal{A}}) & 0
\end{pmatrix}
\begin{pmatrix}
\beta_{\mathcal{A}} \\
g_{\mathcal{A}^c}
\end{pmatrix}
\leq 
\begin{pmatrix}
u_{\mathcal{A}} -\lambda_n g_{\mathcal{A}} \\
-u_{\mathcal{A}} + \lambda_n g_{\mathcal{A}} \\
u_{\mathcal{A}^c} \\ 
- u_{\mathcal{A}^c} \\
\mathbf{1}_{d-|\mathcal{A}|} \\ 
\mathbf{1}_{d-|\mathcal{A}|} \\
0_{\mathcal{A}}
\end{pmatrix}.
\end{equation}
We refer to this as the $\text{KKT}_{\mathcal{A}}$ system. The first four rows are inequalities defining the affine linear system $M_{\mathcal{A}}z = b_{\mathcal{A}}$. The $(0, I)$ and $(0, -I)$ rows bound the box constraints $g_j \in [-1, 1]$ for the inactive variables, and the diagonal row $-\operatorname{diag}(g_{\mathcal A})$ enforces the sign-consistency condition:
\begin{equation}
-\operatorname{diag}(g_{\mathcal A})\beta_{\mathcal A}\le 0
\quad\Longleftrightarrow\quad
g_j\beta_j\ge 0\quad\forall j\in\mathcal A,
\end{equation}
which (together with the active-set assumption $\beta_j\neq0$ for $j\in\mathcal A$) requires $g_j=\sign(\beta_j)$. Thus the block is both sufficient and necessary to impose sign agreement. As the exact intersection of these polyhedral box and sign inequalities and the affine space, the solution space of the restricted LASSO is a polyhedral set.

If the active set is empty ($\mathcal{A} = \emptyset$), which occurs for sufficiently a large regularization penalty $\lambda_n$, the unknowns consist entirely of the inactive dual sign variables $g_{\mathcal{A}^c}$. The block matrix $M_{\mathcal{A}}$ reduces exactly to its lower-right block, $\lambda_n I_{d \times d}$. Consequently, the local Euclidean Hoffman constant is simply $H^{2,2}(\emptyset) = 1/\lambda_n$. This confirms the intuition that when no features are selected, geometric stability is dictated by the regularization penalty.

\section{Tightness and Scaling Behavior of Hoffman Decompositions}
\label{app:empirical-tightness-scaling}

In this appendix, we numerically test the validity and tightness of the derived bounds for the local affine Hoffman constant across three geometries: Euclidean ($\ell_2$, \cref{bound:dimension-free-bound}), Manhattan ($\ell_1$, \cref{bound:l1-affine-hoffman}), and Chebyshev ($\ell_\infty$, \cref{cor:linf-affine-hoffman}).

\subsection{General Numerical Tightness}

To measure the exact Hoffman constant, we instantiate the KKT system $M_{\mathcal{A}}$ for a randomly sampled design $A$ and support $\mathcal{A}$. We compute the ground truth value as the induced operator norm of the matrix inverse, $\|M_{\mathcal{A}}^{-1}\|_p$, using standard numerical linear algebra routines. This value is compared against our explicit theoretical upper bounds in the form of a tightness ratio (theoretical bound / exact value) across 30 Monte Carlo trials for varying dimensions and sparsity levels. A ratio of $1.0$ indicates that the theoretical bound is an exact characterization of the true Hoffman constant. 

As shown in \cref{tab:bound-tightness}, the bounds are remarkably tight across all configurations. Of particular note is the $\ell_\infty$ bound, which remains tight up to numerical precision even for highly correlated designs. For orthogonal and moderately correlated design, both the $\ell_2$ and $\ell_1$ bounds are tight. For the highly correlated design, the $\ell_2$ bound is slightly loose (e.g., around $6\%$ on average for a certain configuration). It is known from the result of \eqref{eq:l2-hoffman-tight-lower-upper-bound} that the $\ell_2$ bound will not be loose by more than approximately $73.2\%$.

However, in this case, the components of the bounds such as $\|G_{\mathcal{A}\mathcal{A}}^{-1}\|_2$ and $\|G_{\mathcal{A}^c\mathcal{A}} G_{\mathcal{A}\mathcal{A}}^{-1}\|_2$ were computed exactly via standard numerical linear algebra regimes. In practice, some approximation error may be introduced when bounding these quantities (e.g., by an RE condition estimate, a submultiplicativity bound on the interaction, or bounding the spectral norm by the Frobenius norm).

\begin{table}[ht]
\centering
\caption{\textbf{Empirical tightness of affine Hoffman bounds in different norms.} The table reports the ratio of the theoretical upper bound to the exact empirical value across 30 Monte Carlo trials per configuration. In each Monte Carlo trial, a design and $s$-sparse support are sampled. Values are reported as median [min, max].}
\label{tab:bound-tightness}
\scriptsize
\begin{tabular}{lrrrccc}
\toprule
\textbf{Design} & \textbf{$d$} & \textbf{$n$} & \textbf{$s$} & \textbf{$H^{2,2}_{\mathcal{S}_{\mathcal{A}}}$ (Spectral)} & \textbf{$H^{\infty, \infty}_{\mathcal{S}_{\mathcal{A}}}$ (Chebyshev)} & \textbf{$H^{1,1}_{\mathcal{S}_{\mathcal{A}}}$ (Manhattan)} \\ 
\midrule
\textit{Gaussian} & 500 & 100 & 5  & 1.000 [1.000, 1.000] & 1.000 [1.000, 1.000] & 1.000 [1.000, 1.001] \\
                  & 500 & 200 & 20 & 1.001 [1.000, 1.001] & 1.000 [1.000, 1.000] & 1.001 [1.000, 1.003] \\
                  & 1000 & 100 & 10 & 1.000 [1.000, 1.000] & 1.000 [1.000, 1.000] & 1.000 [1.000, 1.001] \\
                  & 2000 & 400 & 40 & 1.000 [1.000, 1.000] & 1.000 [1.000, 1.000] & 1.000 [1.000, 1.001] \\
\midrule
\textit{Spiked ($\rho=0.5$)} & 500 & 100 & 10 & 1.001 [1.001, 1.002] & 1.000 [1.000, 1.000] & 1.001 [1.000, 1.004] \\
                    & 1000 & 200 & 20 & 1.001 [1.001, 1.002] & 1.000 [1.000, 1.000] & 1.001 [1.000, 1.002] \\
                    & 2000 & 400 & 40 & 1.001 [1.001, 1.002] & 1.000 [1.000, 1.000] & 1.001 [1.000, 1.003] \\
\midrule
\textit{Spiked ($\rho=0.8$)} & 500 & 100 & 5  & 1.003 [1.001, 1.004] & 1.000 [1.000, 1.000] & 1.001 [1.000, 1.004] \\
                    & 500 & 200 & 40 & 1.059 [1.036, 1.086] & 1.000 [1.000, 1.000] & 1.010 [1.000, 1.026] \\
                    & 1000 & 100 & 20 & 1.012 [1.008, 1.023] & 1.000 [1.000, 1.000] & 1.004 [1.000, 1.008] \\
                    & 1000 & 400 & 40 & 1.017 [1.013, 1.026] & 1.000 [1.000, 1.000] & 1.005 [1.000, 1.011] \\
                    & 2000 & 400 & 40 & 1.008 [1.007, 1.012] & 1.000 [1.000, 1.000] & 1.002 [1.000, 1.005] \\
\bottomrule
\end{tabular}
\end{table}

\subsection{Scaling Behavior with Ambient Dimension}

A second consideration is how the exact value of and bound on the Hoffman constant scales as the ambient dimension is increased by introducing unrelated features. We validate this behavior by fixing the sample size $n=200$ and sparsity $s=10$, while increasing the ambient dimension $d$ from 400 to 2000 by appending features to the design matrix while maintaining the original support.

As illustrated in \cref{fig:scaling-decomposition}, the exact values of $H^{2,2}$ and $H^{1,1}$ grow steadily as the number of inactive features increases. This growth occurs despite the underlying signal and its local curvature remaining unchanged. In contrast, the $H^{\infty, \infty}$ constant remains relatively flat. This suggests coordinate-wise solvers, which rely on the $\ell_\infty$ geometry for their convergence rates, should work well in datasets which a high feature count.

\begin{figure}[ht]
    \centering
    \includegraphics[width=\textwidth]{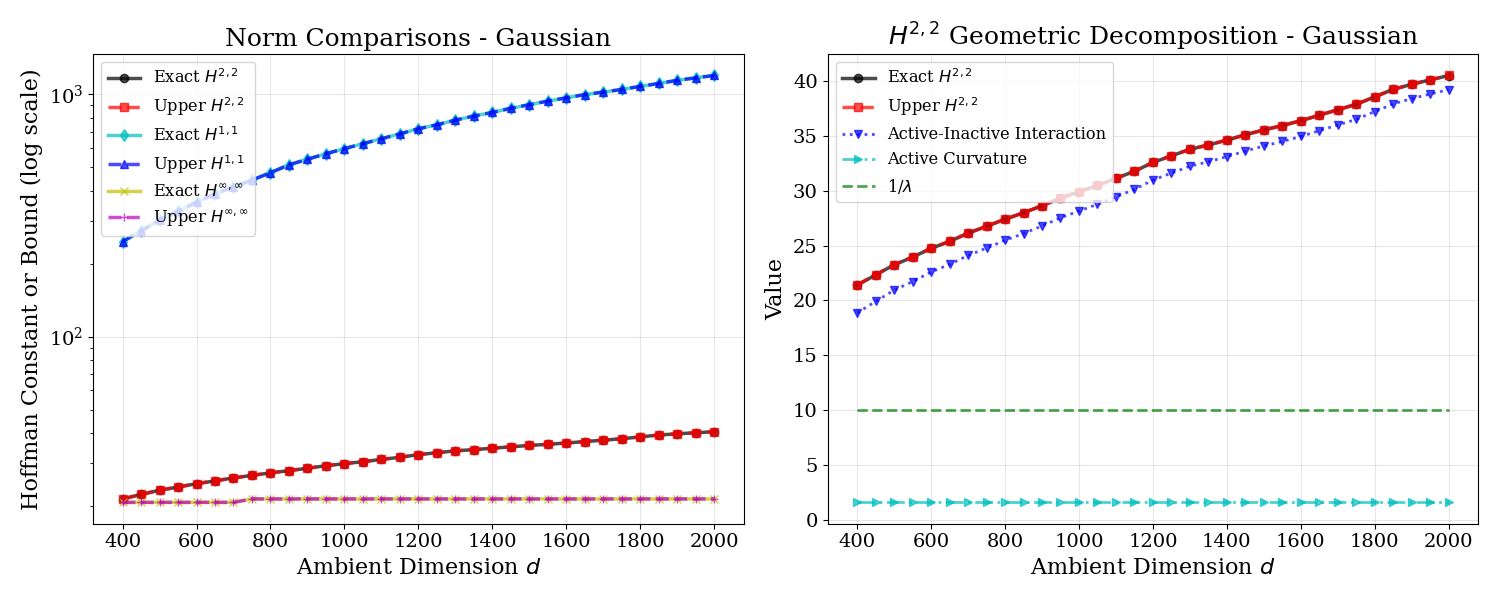}
    \caption{\textbf{Scaling of affine Hoffman constants and geometric decomposition with ambient dimension $d$ for a Gaussian design ($n=200, s=10$).} (Left) Comparison of exact constants and theoretical upper bounds for $H^{2,2}$, $H^{1,1}$, and $H^{\infty,\infty}$ as unrelated ambient features are added. Both the Euclidean and Manhattan measures scale with $d$, while the Chebyshev constant remains relatively dimension-independent. (Right) Decomposition of the $H^{2,2}$ bound components. The growth in the Hoffman constant is driven entirely by the active-inactive interaction norm $\|G_{\mathcal{A}^c\mathcal{A}}\|_2$, while the active curvature remains close to constant.}
    \label{fig:scaling-decomposition}
\end{figure}

Furthermore, the decomposition into the max bound in \eqref{eq:l2-hoffman-tight-lower-upper-bound} allows isolating the cause of this instability. The right plot of \cref{fig:scaling-decomposition} shows the constituent components of the affine Hoffman bound in the spectral case: the active curvature penalty $\|G_{\mathcal{AA}}^{-1}\|_2$, the interaction norm $\|G_{\mathcal{A}^c\mathcal{A}} G_{\mathcal{AA}}^{-1}\|_2$, and the inverse of the regularization parameter $\lambda_n$. The total growth in the Hoffman constant tracks to some multiple of the interaction norm.

\section{Proofs for Elastic Net Hyperparameter Selection}

This appendix provides the proofs for the geometric optimization of the elastic net hyperparameters introduced in \cref{subsubsec:elastic-net-hyperparam-selection}.

\subsection{Proof of \cref{lem:naive-optimal-elasticnet}}
\label{app:proof-naive-optimal-elasticnet}

\begin{proof}
    We aim to minimize the surrogate envelope $\mathcal{E}$ subject to the regularization budget constraint $\lambda_{n,1} + \lambda_{n,2} = \Lambda_n$. We parameterize the terms inside the envelope by $\lambda_{n,2}$:
    \begin{equation}
        T_1(\lambda_{n,2}) = \frac{1}{\lambda_{n,2}}, \quad T_2(\lambda_{n,2}) = \frac{1}{\Lambda_n - \lambda_{n,2}} \left( 1 + \frac{\|G_{\mathcal{A}^c \mathcal{A}}\|_2}{\lambda_{n,2}} \right).
    \end{equation}
    Observing the behavior of the terms on the valid interval $\lambda_{n,2} \in (0, \Lambda_n)$: $T_1(\lambda_{n,2})$ is strictly decreasing, and $T_2(\lambda_{n,2})$ is a convex U-shaped function diverging at both boundaries. 
        
    The envelope $\mathcal{E} = \max(T_1, T_2)$ is minimized either at the intersection of the two curves ($T_1 = T_2$) or at the local vertex of $T_2$. As established later in the general phase transition of \cref{thm:statistically-constrained-optimum}, the condition $\|G_{\mathcal{A}^c \mathcal{A}}\|_2 \le \Lambda_n/3$ guarantees that the vertex of $T_2$ occurs at a $\lambda_{n,2}$ value greater than the intersection point. Because $T_1$ is strictly decreasing, the global minimum of the upper envelope $\mathcal{E}$ is at the intersection of the two curves.
    
    Equating $T_1 = T_2$:
    \begin{equation}
        \frac{1}{\lambda_{n,2}} = \frac{1}{\lambda_{n,1}} \left( 1 + \frac{\|G_{\mathcal{A}^c \mathcal{A}}\|_2}{\lambda_{n,2}} \right) \implies \frac{\lambda_{n,1}}{\lambda_{n,2}} = \frac{\lambda_{n,2} + \|G_{\mathcal{A}^c \mathcal{A}}\|_2}{\lambda_{n,2}} \implies \lambda_{n,1} = \lambda_{n,2} + \|G_{\mathcal{A}^c \mathcal{A}}\|_2.
    \end{equation}
    Solving the linear system defined by this balance condition $\lambda_{n,1} - \lambda_{n,2} = \|G_{\mathcal{A}^c \mathcal{A}}\|_2$ and the budget constraint $\lambda_{n,1} + \lambda_{n,2} = \Lambda_n$ yields the explicit optimal parameters:
    \begin{equation}
        \lambda_{n,1}^* = \frac{\Lambda_n + \|G_{\mathcal{A}^c \mathcal{A}}\|_2}{2}, \quad \lambda_{n,2}^* = \frac{\Lambda_n - \|G_{\mathcal{A}^c \mathcal{A}}\|_2}{2}.
    \end{equation}
\end{proof}

\subsection{Proof of \cref{lem:naive-envelope-gap}}
\label{app:proof-naive-envelope-gap}

\begin{proof}
    We denote the components of the relaxed envelope as $T_1, T_2$ and the tight envelope as $T_1^{\text{tight}}, T_2^{\text{tight}}$. The pointwise differences are:
    \begin{align}
        T_1 - T_1^{\text{tight}} &= \frac{1}{\lambda_{n,2}} - \frac{1}{\lambda_{n,2} + \lambda_{\min}(G_{\mathcal{A}\mathcal{A}})} = \frac{\lambda_{\min}(G_{\mathcal{A}\mathcal{A}})}{\lambda_{n,2}(\lambda_{n,2} + \lambda_{\min}(G_{\mathcal{A}\mathcal{A}}))} \nonumber \\
        T_2 - T_2^{\text{tight}} &= \frac{1}{\lambda_{n,1}}\left(1 + \frac{\|G_{\mathcal{A}^c \mathcal{A}}\|_2}{\lambda_{n,2}}\right) - \frac{1}{\lambda_{n,1}}\left(1 + \frac{\|G_{\mathcal{A}^c \mathcal{A}}\|_2}{\lambda_{n,2} + \lambda_{\min}(G_{\mathcal{A}\mathcal{A}})}\right) \nonumber \\
        &= \frac{\|G_{\mathcal{A}^c \mathcal{A}}\|_2}{\lambda_{n,1}} \left( \frac{1}{\lambda_{n,2}} - \frac{1}{\lambda_{n,2} + \lambda_{\min}(G_{\mathcal{A}\mathcal{A}})} \right) = \frac{\|G_{\mathcal{A}^c \mathcal{A}}\|_2}{\lambda_{n,1}} \frac{\lambda_{\min}(G_{\mathcal{A}\mathcal{A}})}{\lambda_{n,2}(\lambda_{n,2} + \lambda_{\min}(G_{\mathcal{A}\mathcal{A}}))}.
    \end{align}
    By the nonexpansive property of the maximum operator, $\mathcal{E} - \mathcal{E}_{\text{tight}} \le \max(T_1 - T_1^{\text{tight}}, \quad T_2 - T_2^{\text{tight}})$. Factoring out the common fractional term yields the pointwise bound.
    
    For the optimal gap, recall from \cref{lem:naive-optimal-elasticnet} that in the weak interaction regime, the relaxed optimum occurs at $\lambda_{n,2}^* = (\Lambda_n - \|G_{\mathcal{A}^c \mathcal{A}}\|_2)/2$, yielding the minimum envelope value:
    \begin{equation}
        \mathcal{E}^* = \frac{1}{\lambda_{n,2}^*} = \frac{2}{\Lambda_n - \|G_{\mathcal{A}^c \mathcal{A}}\|_2}.
    \end{equation} 
    Similarly, the intersection of the tight surrogate occurs at $\lambda_{n,2}^{\text{tight}, *} = (\Lambda_n - \|G_{\mathcal{A}^c \mathcal{A}}\|_2 - \lambda_{\min}(G_{\mathcal{A}\mathcal{A}}))/2$. The minimum value of the tight envelope is therefore:
    \begin{equation}
        \mathcal{E}_{\text{tight}}^* = \frac{1}{\lambda_{n,2}^{\text{tight}, *} + \lambda_{\min}(G_{\mathcal{A}\mathcal{A}})} = \frac{2}{\Lambda_n - \|G_{\mathcal{A}^c \mathcal{A}}\|_2 + \lambda_{\min}(G_{\mathcal{A}\mathcal{A}})}.
    \end{equation}
    Subtracting the two optimal values yields
    \begin{align}
        \mathcal{E}^* - \mathcal{E}_{\text{tight}}^* &= \frac{2}{\Lambda_n - \|G_{\mathcal{A}^c \mathcal{A}}\|_2} - \frac{2}{\Lambda_n - \|G_{\mathcal{A}^c \mathcal{A}}\|_2 + \lambda_{\min}(G_{\mathcal{A}\mathcal{A}})} \nonumber \\
        &= \frac{2(\Lambda_n - \|G_{\mathcal{A}^c \mathcal{A}}\|_2 + \lambda_{\min}(G_{\mathcal{A}\mathcal{A}})) - 2(\Lambda_n - \|G_{\mathcal{A}^c \mathcal{A}}\|_2)}{(\Lambda_n - \|G_{\mathcal{A}^c \mathcal{A}}\|_2)(\Lambda_n - \|G_{\mathcal{A}^c \mathcal{A}}\|_2 + \lambda_{\min}(G_{\mathcal{A}\mathcal{A}}))} \nonumber \\
        &= \frac{2\lambda_{\min}(G_{\mathcal{A}\mathcal{A}})}{(\Lambda_n - \|G_{\mathcal{A}^c \mathcal{A}}\|_2)(\Lambda_n - \|G_{\mathcal{A}^c \mathcal{A}}\|_2 + \lambda_{\min}(G_{\mathcal{A}\mathcal{A}}))}.
    \end{align}
\end{proof}

\subsection{Proof of \cref{thm:statistically-constrained-optimum}}
\label{app:proof-statistically-constrained-optimum}

\begin{proof}
    Substituting the budget constraint $\lambda_{n,2}=\Lambda_n-\lambda_{n,1}$, the surrogate envelope evaluates to the upper envelope of two functions parameterized by $\lambda_{n,1}$ on the interval $(0,\Lambda_n)$:
    \begin{equation}
        \mathcal{E}(\lambda_{n,1})=\max\left(\underbrace{\frac{1}{\Lambda_n-\lambda_{n,1}}}_{T_1(\lambda_{n,1})},\quad\underbrace{\frac{\Lambda_n-\lambda_{n,1}+\|G_{\mathcal{A}^c\mathcal{A}}\|_2}{\lambda_{n,1}(\Lambda_n-\lambda_{n,1})}}_{T_2(\lambda_{n,1})}\right).
    \end{equation}
    We identify the critical points of this envelope. First, the intersection $T_1(\lambda_{n,1})=T_2(\lambda_{n,1})$ occurs at:
    \begin{equation}
        \frac{1}{\Lambda_n-\lambda_{n,1}}=\frac{\Lambda_n-\lambda_{n,1}+\|G_{\mathcal{A}^c\mathcal{A}}\|_2}{\lambda_{n,1}(\Lambda_n-\lambda_{n,1})}\implies\lambda_{\text{int}}=\frac{\Lambda_n+\|G_{\mathcal{A}^c\mathcal{A}}\|_2}{2}.
    \end{equation}
    Second, the global minimum of the convex curve $T_2(\lambda_{n,1})$ is found by setting its derivative to zero:
    \begin{equation}
        T_2'(\lambda_{n,1})=\frac{-1(\lambda_{n,1}(\Lambda_n-\lambda_{n,1})) - (\Lambda_n-\lambda_{n,1}+\|G_{\mathcal{A}^c\mathcal{A}}\|_2)(\Lambda_n-2\lambda_{n,1})}{(\lambda_{n,1}(\Lambda_n-\lambda_{n,1}))^2}=0.
    \end{equation}
    Setting the numerator to zero and collecting terms yields the quadratic equation:
    \begin{equation}
        \lambda_{n,1}^2-2(\Lambda_n+\|G_{\mathcal{A}^c\mathcal{A}}\|_2)\lambda_{n,1}+\Lambda_n(\Lambda_n+\|G_{\mathcal{A}^c\mathcal{A}}\|_2)=0.
    \end{equation}
    Taking the root that lies in the valid interval $(0,\Lambda_n)$ yields the vertex:
    \begin{equation}
        \lambda_{\text{vert}}=\Lambda_n+\|G_{\mathcal{A}^c\mathcal{A}}\|_2-\sqrt{\|G_{\mathcal{A}^c\mathcal{A}}\|_2(\Lambda_n+\|G_{\mathcal{A}^c\mathcal{A}}\|_2)}.
    \end{equation}
    
    The unconstrained minimum $\lambda_{n,1}^{\text{unc}}$ depends on whether the vertex $\lambda_{\text{vert}}$ occurs before or after the intersection $\lambda_{\text{int}}$. Setting $\lambda_{\text{int}}\le\lambda_{\text{vert}}$ simplifies algebraically to $\|G_{\mathcal{A}^c\mathcal{A}}\|_2\le\Lambda_n/3$. Thus, if $\|G_{\mathcal{A}^c\mathcal{A}}\|_2\le\Lambda_n/3$, the intersection curve dictates the minimum. If $\|G_{\mathcal{A}^c\mathcal{A}}\|_2>\Lambda_n/3$, the envelope is dominated by $T_2$, and the vertex dictates the minimum. This establishes the phase transition for the exact geometric optimum $\lambda_{n,1}^{\text{unc}}$. 
        
    Finally, because the upper envelope $\mathcal{E}(\lambda_{n,1})$ is a convex function that is strictly decreasing for $\lambda_{n,1}<\lambda_{n,1}^{\text{unc}}$ and strictly increasing for all $\lambda_{n,1}>\lambda_{n,1}^{\text{unc}}$, enforcing the feasible region $\lambda_{n,1}\ge\lambda_{\text{stat}}$ requires the optimum to be $\lambda_{n,1}^{\text{unc}}$ if the constraint is slack ($\lambda_{\text{stat}}\le\lambda_{n,1}^{\text{unc}}$), and on the boundary $\lambda_{\text{stat}}$ if the constraint binds ($\lambda_{\text{stat}}>\lambda_{n,1}^{\text{unc}}$). Therefore, the exact constrained minimum is $\lambda_{n,1}^*=\max(\lambda_{n,1}^{\text{unc}},\lambda_{\text{stat}})$.
\end{proof}

\subsection{Proof of \cref{lem:uncoupled-gap}}
\label{app:proof-uncoupled-gap}

\begin{proof}
    In the weak-interaction regime, the unconstrained minimizer of the relaxed envelope is
    \begin{equation}
        \lambda_{n,1}^{\mathrm{unc}}
        =
        \frac{\Lambda_n+\|G_{\mathcal A^c\mathcal A}\|_2}{2}.
    \end{equation}
    Since $\Xi>\lambda_{n,1}^{\mathrm{unc}}$, the uncoupled constraint is binding. Therefore,
    \begin{equation}
        \lambda_{n,1}^{\mathrm{uncoupled}}
        =
        \Xi,
        \qquad
        \lambda_{n,2}^{\mathrm{uncoupled}}
        =
        \Lambda_n-\Xi.
    \end{equation}
    Moreover,
    \begin{equation}
        \Xi
        >
        \frac{\Lambda_n+\|G_{\mathcal A^c\mathcal A}\|_2}{2}
        \quad\Longleftrightarrow\quad
        \Xi
        >
        \Lambda_n-\Xi+\|G_{\mathcal A^c\mathcal A}\|_2.
    \end{equation}
    Hence the first branch of the relaxed envelope dominates at the uncoupled optimum, and
    \begin{equation}
        \mathcal E_{\mathrm{uncoupled}}^*
        =
        \frac{1}{\Lambda_n-\Xi}.
    \end{equation}

    Under the coupled constraint, the budget identity gives
    \begin{equation}
        \Lambda_n-\lambda_{n,2}
        \geq
        \Xi+\Omega\lambda_{n,2}.
    \end{equation}
    Equivalently,
    \begin{equation}
        \lambda_{n,2}
        \leq
        \frac{\Lambda_n-\Xi}{1+\Omega}.
    \end{equation}
    Since the relaxed envelope is strictly increasing in
    $\lambda_{n,1}$ to the right of its unconstrained minimizer, its
    coupled minimum occurs at the boundary. Thus,
    \begin{equation}
        \lambda_{n,2}^{\mathrm{coupled}}
        =
        \frac{\Lambda_n-\Xi}{1+\Omega},
        \qquad
        \lambda_{n,1}^{\mathrm{coupled}}
        =
        \frac{\Xi+\Omega\Lambda_n}{1+\Omega}.
    \end{equation}

    We also have
    \begin{equation}
        \lambda_{n,1}^{\mathrm{coupled}}
        \geq
        \Xi
        >
        \Lambda_n-\Xi+\|G_{\mathcal A^c\mathcal A}\|_2
        \geq
        \lambda_{n,2}^{\mathrm{coupled}}+\|G_{\mathcal A^c\mathcal A}\|_2.
    \end{equation}
    It follows that
    \begin{equation}
        \mathcal E_{\mathrm{coupled}}^*
        =
        \frac{1}{\lambda_{n,2}^{\mathrm{coupled}}}
        =
        \frac{1+\Omega}{\Lambda_n-\Xi}.
    \end{equation}
    Subtracting the uncoupled value yields
    \begin{equation}
        \mathcal E_{\mathrm{coupled}}^*
        -
        \mathcal E_{\mathrm{uncoupled}}^*
        =
        \frac{\Omega}{\Lambda_n-\Xi},
    \end{equation}
    while division gives
    \begin{equation}
        \frac{
            \mathcal E_{\mathrm{coupled}}^*
        }{
            \mathcal E_{\mathrm{uncoupled}}^*
        }
        =
        1+\Omega.
    \end{equation}
\end{proof}

\end{document}